\numberwithin{equation}{section}
\newtheorem{theorem}{Theorem}[section]
\newtheorem{corollary}[theorem]{Corollary}
\newtheorem{proposition}[theorem]{Proposition}
\newtheorem{conjecture}[theorem]{Conjecture}
\newtheorem{lemma}[theorem]{Lemma}
\theoremstyle{definition}
\newtheorem{definition}[theorem]{Definition}
\newtheorem{example}[theorem]{Example}
\newtheorem{remark}[theorem]{Remark}
\newtheorem{assumption}[]{Assumption}
\title{Simplifying branched covering surface-knots by an addition of 1-handles with chart loops}
\author{Inasa Nakamura}
\address{Graduate School of Mathematical Sciences, The University of Tokyo\newline
3-8-1 Komaba, Tokyo 153-8914, Japan}
\email{inasa@ms.u-tokyo.ac.jp}
\subjclass[2010]{Primary 57Q45; Secondary 57Q35}
\keywords{surface-knot; 2-dimensional braid; chart; 1-handle}
\begin{document}

\begin{abstract}
A branched covering surface-knot over an oriented surface-knot $F$ is a surface-knot in the form of a branched covering over $F$. 
A branched covering surface-knot over $F$ is presented by a graph called a chart on a surface diagram of $F$. 
For a branched covering surface-knot, an addition of 1-handles equipped with chart loops is a simplifying operation which deforms the chart to the form of the union of free edges and 1-handles with chart loops. We investigate properties of such simplifications. 
\end{abstract}

\maketitle

\section{Introduction}\label{sec1}

A {\it surface-knot} is the image of a smooth embedding of a closed surface into the Euclidean 4-space $\mathbb{R}^4$ \cite{CKS, Kamada02}. 
In this paper, we consider oriented surface-knots. 
For a surface-knot $F$, we consider a surface in the form of a covering over $F$, called a branched covering surface-knot or a 2-dimensional braid over $F$ \cite{N, N4}. The first aim of this paper is to announce that we change the name of a \lq\lq 2-dimensional braid" to a \lq\lq branched covering surface-knot"  (see Section \ref{sec2}). Two branched covering surface-knots over $F$ are equivalent if one is carried to the other by an ambient isotopy of $\mathbb{R}^4$ whose restriction to a tubular neighborhood of $F$ is fiber-preserving. 
A branched covering surface-knot over $F$, denoted by $(F, \Gamma)$, is presented by a graph $\Gamma$ called a chart on a surface diagram of $F$ \cite{N4}. 

In \cite{N5}, we showed that a branched covering surface-knot deforms to a simplified form in terms of charts by an addition of 1-handles with chart loops.  
The second aim of this paper is to investigate further such simplifications of branched covering surface-knots.

Let $B^2$ be a unit 2-disk and let $I=[0,1]$. 
A {\it 1-handle} is a 3-ball $h=B^2 \times I$ smoothly embedded in $\mathbb{R}^4$ such that $h \cap F=B^2 \times \partial I$. In particular, a 1-handle $h$ is said to be {\it trivial} if it is  embedded in a 3-ball $B^3$ such that $F \cap B^3$ is a 2-disk. 
The surface-knot obtained from $F$ by a {\it 1-handle addition} along $h$ is the surface
\[
(F-(\mathrm{Int} B^2 \times \partial I)) \cup (\partial B^2 \times I),
\]
which is denoted by $F+h$. In this paper, we assume that $h$ is orientable, that is, $F+h$ is orientable, and we give $F+h$ the orientation induced from that of $F$. Further, we assume that 1-handles are trivial. 
Let $h=B^2 \times I$ be a 1-handle. We call $B^2 \times \{0\}$ and $B^2 \times \{1\}$ the {\it ends} of $h$. 
When both ends of $h$ are on a 2-disk $E$ in $F$, we determine the {\it core loop} of $h$ as follows. Let $\rho(t), t \in I$ be an oriented path in $\partial B^2 \times I \subset h$ with the orientation of $I$ such that $\rho(t) \in \partial B^2 \times \{t\}, t \in I$. 
We define the {\it core loop} of $h$ by the oriented closed path in $F+h$ obtained from $\rho(t), t \in [0,1]$ by connecting the initial and terminal points $\rho(0)$ and $\rho(1)$ by a simple arc in $E$, with the induced orientation. 
We determine the {\it cocore} of a 1-handle $h$ by the oriented closed path $\partial B^2 \times \{0\} \subset h$, with the orientation of $\partial B^2$. 
Further, we determine the base point of the core loop and the cocore of $h$ by their intersection point. See Fig. \ref{fig1}. For a set of 1-handles, we add a condition that core loops are mutually disjoint. 

\begin{figure}[ht]
\centering
\includegraphics*[height=3.5cm]{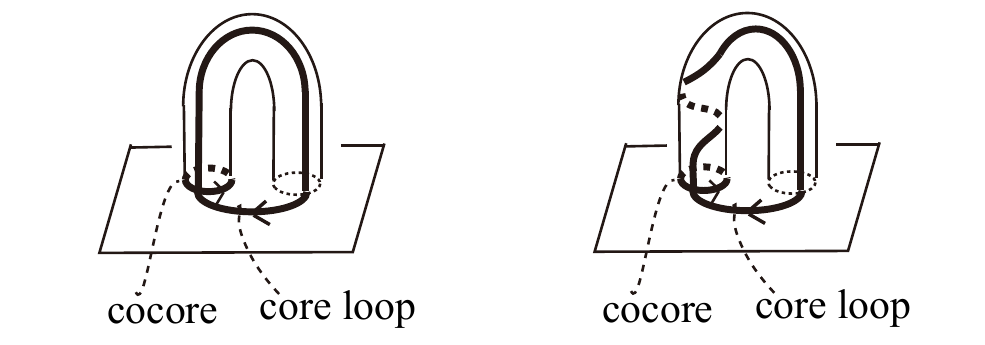}
\caption{The core loop and the cocore of a 1-handle. There are two types.}
\label{fig1}
\end{figure}

\begin{assumption}\label{assump1}
In this paper, for simplicity, we assume that all 1-handles are trivial, and we assume that a surface-knot $F$ is an unknotted surface-knot with one component in the standard form, that is, $F$ is in the form of the boundary of a handlebody in $\mathbb{R}^3 \times \{0\} \subset \mathbb{R}^4$. 
\end{assumption}

A 1-handle admits two types framings, presented by the core loop and the cocore as indicated in Fig. \ref{fig1} \cite{Boyle2, Livingston}, see also \cite[Lemma 4.2]{N5}. 
In this paper, for simplicity, we do not distinguish the framings. 
We consider a 1-handle $h$ attached to a 2-disk $E$. 
For commutative braids $a$ and $b$, we denote by $h(a,b)$ a covering surface over $h$ whose cocore with the given orientation and core loop with the reversed orientation present the braids $a$ and $b$, respectively. Such a chart is drawn on the union of $h$ and neighborhoods of the core loop and the cocore in $E$,   
and this chart is unique up to C-move equivalence \cite{N}, where we assume that $E$ has no edges nor vertices of a chart except those coming from $h(a,b)$. Hence, the notation $h(a,b)$ is well-defined, under the condition that we assign the place where we attach the 1-handle. We call the chart on $E+h$ a {\it chart of a 1-handle}, and we call $h(a,b)$ a {\it 1-handle with a chart}, or simply a {\it 1-handle}. 
A {\it chart loop} is a closed path consisting of an edge of a chart or diagonal edges of a chart connected with vertices of degree 4 (crossings). 
In particular, when a chart of a 1-handle consists of chart loops, we call such a 1-handle a {\it 1-handle with chart loops}. 

\begin{remark}
We repeat that we do not distinguish the framings of 1-handles. 
If we take the core loop and the cocore of a 1-handle as indicated in the left figure of Fig. \ref{fig1}, then a 1-handle $h(a,b)$ with the core loop and the cocore as in the right figure of Fig. \ref{fig1} is denoted by $h(a, ab)$. For simplicity, we denote both types of 1-handles by $h(a,b)$.
\end{remark}

An edge of a chart is called a {\it free edge} if its end points are vertices of degree one (black vertices). 
Let $\Gamma_0$ be a chart consisting of a disjoint union of several free edges. 
For a branched covering surface-knot $(F, \Gamma_0)$ and 1-handles with charts $h(a_1, b_1)$, $\ldots$, $h(a_g, b_g)$ attached to a 2-disk $E$ in $F$, where there are no edges nor vertices of charts on $E$ except those of the attached 1-handles, we denote the branched covering surface-knot which is the result of the 1-handle addition by $(F', \Gamma')=(F, \Gamma_0) + \sum_{i=1}^g h(a_i,b_i)$. Note that since $\Gamma_0$ is a disjoint union of free edges, the presentation is well-defined.  

We showed in \cite{N5} the following results. 
Under Assumption \ref{assump1}, the results are written as follow. Let $N$ be a positive integer.

\begin{theorem}[{\cite[Theorem 1.6]{N5}}]\label{thm1-2}
Let $(F, \Gamma)$ be a branched covering surface-knot of degree $N$. 
By an addition of finitely many 1-handles in the form $h(\sigma_i, e)$ or $h(e,e)$ $(i \in \{1, \ldots,N-1\})$, to appropriate places in $F$, $(F, \Gamma)$ deforms to 
\begin{equation}\label{eq1-1}
(F, \Gamma_0) +\sum_{k} h(\sigma_{i_k}, e)+\sum_l h(\sigma_{i_l}, \sigma_{j_l}^{\epsilon_l})+\sum h(e,e),
\end{equation}
where $i_k, i_l, j_l \in \{1,\ldots,N-1\}, |i_l-j_l|>1$ and $\epsilon_l\in \{+1, -1\}$, and $\Gamma_0$ is a chart consisting of several (maybe no) free edges. 

In particular, by an addition of 1-handles $\sum_{i=1}^{N-1}h(\sigma_i, e)$ and finitely many $h(e,e)$, to a fixed 2-disk in $F$, $(F, \Gamma)$ deforms to 
\begin{equation}\label{eq1-2}
(F, \Gamma_0) +\sum_{i=1}^{N-1}h(\sigma_i, e)+\sum_l h(\sigma_{i_l}, \sigma_{j_l}^{\epsilon_l})+\sum h(e,e),
\end{equation}
where $i_l, j_l \in \{1,\ldots,N-1\}, |i_l-j_l|>1$ and $\epsilon_l \in \{+1, -1\}$, and $\Gamma_0$ is a chart consisting of several free edges. 
\end{theorem}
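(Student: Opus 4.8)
\emph{Plan.} The strategy is to normalize $\Gamma$ by C-moves, then to add chart loops of the two permitted types $h(\sigma_i,e)$ and $h(e,e)$ and slide them across all of $\Gamma$ so that, after further C-moves, they absorb it entirely and only free edges and 1-handles with chart loops remain; for the second assertion one finally migrates every handle onto a single 2-disk. First I would use C-moves to isotope $\Gamma$ into a 2-disk $D\subset F$ and put it into ``braid form'' with respect to a generic height function on $D$, i.e.\ as a vertical stack of elementary pieces: a crossing (a transverse double point of edges with labels $\sigma_i,\sigma_j$, $|i-j|>1$), a vertex of degree $6$ (encoding the braid relation $\sigma_i\sigma_{i+1}\sigma_i=\sigma_{i+1}\sigma_i\sigma_{i+1}$), a maximum or minimum of an edge, or a black vertex; if $F$ has positive genus one also records the braid monodromy along a cut system of curves. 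None of this changes $(F,\Gamma)$, so the main content is to eliminate the degree-$6$ vertices and the crossings at the cost of finitely many additions of $h(\sigma_i,e)$ and $h(e,e)$ and then to clean up the remaining arcs.

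\emph{Removing the degree-$6$ vertices.} This is the core step, and the one I expect to be the main obstacle. I would induct on the number $w$ of such vertices. If $w\geq 1$, pick one, say $v$, together with an arc joining it to the boundary of the stacked region. The key technical input is a local move, realized by adding one 1-handle $h(\sigma_i,e)$ (with $i$ one of the two labels meeting at $v$) and then performing C-moves: the new chart loop is slid along the arc to $v$ and pushed through it by means of the braid relation recorded at $v$, the net effect being to delete $v$ while creating only crossings, free edges, and finitely many $h(e,e)$'s. Establishing that a chart loop can be pushed past a degree-$6$ vertex in this way, and checking that the resulting induction terminates so that only finitely many handles are used, is the heart of the argument. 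The analogous move applied to a crossing either absorbs it or converts the sliding handle $h(\sigma_i,e)$ into $h(\sigma_i,\sigma_j^{\pm 1})$ with $|i-j|>1$, and a chart loop bounding a disk is absorbed with an $h(e,e)$; hence after this step $\Gamma$ has no degree-$6$ vertices and no crossings.

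\emph{Finishing.} Now $\Gamma$ is a disjoint union of simple arcs and simple loops carrying single generators: a loop of label $\sigma_i$ is the chart of a handle $h(\sigma_i,e)$ and is replaced by one, an arc with two black ends is a free edge and joins $\Gamma_0$, and an arc running along a cut curve is cut into free edges using the $h(\sigma_j,e)$ loops already at hand, the leftover again being handles of the listed forms (should a core loop acquire a long word, it is split into several handles by a handle decomposition so that each core word is a single generator commuting with the cocore). Collecting by type gives the form \eqref{eq1-1}. For \eqref{eq1-2} one additionally attaches the whole system $\sum_{i=1}^{N-1}h(\sigma_i,e)$ to a fixed 2-disk $E$ and slides every handle produced above into $E$: a 1-handle whose reversed core loop reads the trivial braid has a core loop bounding a disk and so can be slid into $E$ along a path, and when its foot crosses an edge of the ambient chart the core loop picks up a conjugating generator which, thanks to the $h(\sigma_i,e)$ now present on $E$, can be cancelled, what survives being a single commuting factor $\sigma_{j_l}^{\epsilon_l}$. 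This yields \eqref{eq1-2}, completing the proof.
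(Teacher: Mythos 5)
The statement you are proving is quoted from \cite{N5} and is not reproved in this paper, but the machinery the paper develops (Fig.~\ref{fig2a}, Fig.~\ref{fig12}, Lemmas \ref{lem3-5} and \ref{lem3-7}--\ref{lem3-9}, and the proof of Theorem \ref{thm1-7}) shows how the argument actually runs, and your proposal diverges from it at exactly the point you yourself flag as ``the heart of the argument.'' You claim that a single white (degree-$6$) vertex $v$ can be deleted by adding one $h(\sigma_i,e)$, sliding its chart loop to $v$, and ``pushing it through'' via the braid relation, leaving only crossings, free edges and $h(e,e)$'s. No C-move accomplishes this. What actually happens when an end of a 1-handle is slid past a white vertex is shown in Fig.~\ref{fig2a}: the white vertex is \emph{collected onto the 1-handle}, not destroyed, so the handle ceases to be a 1-handle with chart loops and your induction on $w$ does not close. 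In the absence of black vertices, C-moves create and destroy white vertices only in pairs; the paper's Lemma \ref{lem3-9} (the numbers of white vertices of types $w_{i,j}$ and $w_{j,i}$ coincide) exists precisely because the correct elimination is \emph{pairwise}: one adds an $h(e,e)$, turns it into a bridge between neighborhoods of a pair $\{w_{i,j}, w_{j,i}\}$ using $\sum_{i=1}^{N-1}h(\sigma_i,e)$ (Lemma \ref{lem3-5}), and cancels the pair as in Fig.~\ref{fig12}; when black vertices are present one first balances labels at the cost of extra white vertices (Fig.~\ref{fig13}). Your one-vertex-at-a-time deletion is therefore a genuine gap, not a deferrable technicality.

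The remainder of your outline is broadly consistent with the paper's method: eliminating chart loops by CI-M2 moves against the loops of $\sum_{i=1}^{N-1} h(\sigma_i,e)$ (Lemmas \ref{lem3-7} and \ref{lem3-8}), collecting each crossing onto a handle to produce $h(\sigma_{i_l},\sigma_{j_l}^{\epsilon_l})$ with $|i_l-j_l|>1$, and relocating handles to a fixed 2-disk via bridges for the form (\ref{eq1-2}). The preliminary ``braid form in a disk'' normalization, however, is neither available in general (a chart on a positive-genus $F$ has essential loops that cannot be isotoped into a disk) nor needed: the paper treats such loops directly with the same loop-elimination lemmas. To repair the proof, replace your white-vertex step by the pairing argument above, together with the label-balancing move of Fig.~\ref{fig13} when $b(\Gamma)>0$.
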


\begin{definition}\label{def1-7}
We call $(F, \Gamma)$ in the form (\ref{eq1-1}) a branched covering surface-knot in a {\it weak simplified form}, and  
we call the minimal number of 1-handles necessary to deform $(F, \Gamma)$ to the form (\ref{eq1-1}) the {\it weak simplifying number} of $(F, \Gamma)$, which is denoted by $u_w(F, \Gamma)$. 
\end{definition}

\begin{theorem}[{\cite[Theorem 1.8]{N5}}]\label{thm1-4}
Let $(F, \Gamma)$ be a branched covering surface-knot of degree $N$.  By an addition of finitely many 1-handles in the form $h(\sigma_i, e)$, $h(\sigma_i, \sigma_j^\epsilon)$ or $h(e,e)$ $(i, j \in \{1, \ldots,N-1\}, |i-j|>1, \epsilon \in \{+1, -1\})$, to appropriate places in $F$, $(F, \Gamma)$  deforms to  
\begin{equation}\label{eq1-3}
(F, \Gamma_0) +\sum_k h(\sigma_{i_k}, e)+\sum h(e,e),
\end{equation}
where $i_k \in \{1,\ldots,N-1\}$ and $\Gamma_0$ is a chart consisting of several free edges. 

In particular, by an addition of 1-handles $\sum_{i=1}^{N-1}h(\sigma_i, e)$ and finitely many 1-handles in the form $h(\sigma_i, \sigma_j^\epsilon)$ $(i, j \in \{1, \ldots,N-1\}, |i-j|>1, \epsilon \in \{+1, -1\})$ or $h(e,e)$, to a fixed 2-disk in $F$, $(F, \Gamma)$ deforms to 
\begin{equation}
(F, \Gamma_0) +\sum_{i=1}^{N-1}h(\sigma_i, e)+\sum h(e,e),   
\end{equation}
where $\Gamma_0$ is a chart consisting of several free edges. \end{theorem}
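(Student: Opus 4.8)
The plan is to bootstrap from Theorem~\ref{thm1-2}. First apply that theorem (using \eqref{eq1-1} for the general assertion and \eqref{eq1-2} for the ``in particular'' one): after an addition of 1-handles of the forms $h(\sigma_i,e)$ and $h(e,e)$, which are among the moves permitted here, $(F,\Gamma)$ is carried to a weak simplified form
\[
(F,\Gamma_0)+\sum_k h(\sigma_{i_k},e)+\sum_l h(\sigma_{i_l},\sigma_{j_l}^{\epsilon_l})+\sum h(e,e),\qquad |i_l-j_l|>1 .
\]
Since \eqref{eq1-3} is precisely this expression with the middle sum erased, it suffices to prove the following claim and invoke it once for each index $l$: \emph{if $|i-j|>1$, then the pair of 1-handles $h(\sigma_i,\sigma_j^{\epsilon})+h(\sigma_i,\sigma_j^{-\epsilon})$, attached to a common 2-disk $E$ that carries no other part of the chart, deforms, by C-moves together with isotopies sliding a foot of one 1-handle over the other, into $h(\sigma_i,e)+h(\sigma_j,e)$.}

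I would prove the claim as follows. The chart of $h(\sigma_i,\sigma_j^{\epsilon})$ is an $i$-labeled chart loop in the cocore direction together with a $j$-labeled one in the core-loop direction, and since $|i-j|>1$ these meet only transversally at crossings; hence each $i$-labeled loop may be held rigid and disjoint from the rest of the picture, and the data that moves lies in the two core loops, whose braids $\sigma_j^{\epsilon}$ and $\sigma_j^{-\epsilon}$ are mutually inverse. Slide a foot of $h(\sigma_i,\sigma_j^{\epsilon})$ over $h(\sigma_i,\sigma_j^{-\epsilon})$ along an arc homotopic into the core loop of the latter: this concatenates $\sigma_j^{-\epsilon}$ onto the closing arc of the former's core loop, so its braid becomes $\sigma_j^{\epsilon}\sigma_j^{-\epsilon}=e$ and $h(\sigma_i,\sigma_j^{\epsilon})$ is turned into $h(\sigma_i,e)$. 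A symmetric foot-slide of the remaining 1-handle then reduces it to $h(\sigma_j,e)$ (here one uses that $\sigma_i$ and $\sigma_j$ are conjugate in the braid group to absorb the change of cocore braid). Running this for every $l$ gives the general assertion: each step adds only a permitted 1-handle $h(\sigma_{i_l},\sigma_{j_l}^{-\epsilon_l})$ and produces only 1-handles of the forms $h(\sigma_{\cdot},e)$ and $h(e,e)$, so that after all $l$ only 1-handles of these forms remain, i.e. \eqref{eq1-3}.

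For the ``in particular'' assertion, carry everything out on one fixed 2-disk. Start from \eqref{eq1-2}, which the ``in particular'' part of Theorem~\ref{thm1-2} places on that disk with $\Gamma_0$ a union of free edges and with $\sum_{i=1}^{N-1}h(\sigma_i,e)$ already there; add each auxiliary $h(\sigma_{i_l},\sigma_{j_l}^{-\epsilon_l})$ on the same disk and perform the pair-cancellations above, producing new handles $h(\sigma_{i_l},e)$ and $h(\sigma_{j_l},e)$. Each of these duplicates a handle already present among $\sum_{i=1}^{N-1}h(\sigma_i,e)$, so, using the C-move of \cite{N5} that replaces two parallel handles $h(\sigma_m,e)+h(\sigma_m,e)$ by $h(\sigma_m,e)+h(e,e)$, absorb every duplicate; what remains is exactly $\sum_{i=1}^{N-1}h(\sigma_i,e)$ together with finitely many $h(e,e)$, which is the asserted form.

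The step I expect to be the main obstacle is the foot-slide: realizing the schematic ``slide a foot over the other 1-handle'' as an explicit finite sequence of C-moves and 1-handle isotopies on the surface diagram of $F$, keeping every 1-handle trivial throughout so that the notation $(F,\Gamma_0)+\sum h(\cdot,\cdot)$ stays well defined, checking that no spurious vertices or chart loops are created on $E$, and keeping track of the base points of the core loops and cocores. A secondary, more routine point, needed only for the ``in particular'' assertion, is that relocating the auxiliary and the resulting handles onto the single fixed 2-disk disturbs nothing else; this is where Assumption~\ref{assump1} --- $F$ unknotted and standard, $\Gamma_0$ a union of free edges --- is used.
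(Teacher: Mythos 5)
Note first that the paper does not prove this statement: Theorem \ref{thm1-4} is quoted verbatim from \cite[Theorem 1.8]{N5}, so there is no in-paper proof to compare against. Judged on its own terms, your strategy --- pass to a weak simplified form via Theorem \ref{thm1-2} and then kill each $h(\sigma_{i_l},\sigma_{j_l}^{\epsilon_l})$ by adding its opposite-sign partner $h(\sigma_{i_l},\sigma_{j_l}^{-\epsilon_l})$ --- is sound in outline, and it is exactly the trick the paper itself uses in the proof of Theorem \ref{thm1-8}. But your key claim is wrong as stated: $h(\sigma_i,\sigma_j^{\epsilon})+h(\sigma_i,\sigma_j^{-\epsilon})$ does \emph{not} deform to $h(\sigma_i,e)+h(\sigma_j,e)$. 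By Lemma \ref{lem3-8} (relation (\ref{eq3-12})) the pair becomes $h(\sigma_i,\sigma_j^{\epsilon}\sigma_j^{-\epsilon})+h(e,\sigma_j^{-\epsilon})=h(\sigma_i,e)+h(e,\sigma_j^{-\epsilon})$: the second handle's cocore braid stays $e$. No foot-slide can turn $h(e,\sigma_j^{-\epsilon})$ into $h(\sigma_j,e)$, because sliding an end only conjugates the cocore monodromy (and $e$ conjugated is $e$); your appeal to $\sigma_i$ and $\sigma_j$ being conjugate in the braid group is not a chart move and does not act on a single handle. So after your cancellation you are left with a handle $h(e,\sigma_j^{-\epsilon})$ that is not of the form allowed in (\ref{eq1-3}).

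The repair is available in the paper's own toolkit and you essentially need the ``in particular'' form (\ref{eq1-2}) of Theorem \ref{thm1-2} for \emph{both} assertions, not just the second: once $\sum_{i=1}^{N-1}h(\sigma_i,e)$ is present on the same 2-disk, relation (\ref{eq3-10}) of Lemma \ref{lem3-7} converts the leftover $h(e,\sigma_j^{-\epsilon})$ into $h(e,e)$, and the general assertion then follows because the particular form is a special case of (\ref{eq1-3}). (This is precisely the computation in the proof of Theorem \ref{thm1-8}: $h(\sigma_1,\sigma_3^{-1})+h(\sigma_1,\sigma_3)\sim h(\sigma_1,e)+h(e,\sigma_3)$, which becomes two copies of $h(e,e)$ using $\sum_{i=1}^{N-1}h(\sigma_i,e)$.) Your final ``absorb duplicates'' step is then relation (\ref{eq3-9}) and is fine. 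With this correction the argument goes through; as written, the central cancellation step fails.
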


\begin{definition}\label{def1-9}
We call $(F, \Gamma)$ in the form (\ref{eq1-3}) a branched covering surface-knot in a {\it  simplified form}, and 
we call the minimal number of 1-handles necessary to deform $(F, \Gamma)$ to the form (\ref{eq1-3}) the {\it simplifying number} of $(F, \Gamma)$, which is denoted by $u(F, \Gamma)$.  
\end{definition}

\begin{remark}
Here, we give new names for $u_w(F, \Gamma)$ and $u(F, \Gamma)$. 
In \cite{N5}, we called $u_w(F, \Gamma)$ and $u(F, \Gamma)$ the weak unbraiding number and the unbraiding number, respectively. 
\end{remark}
  
\begin{figure}[ht]
\centering
\includegraphics*[width=13cm]{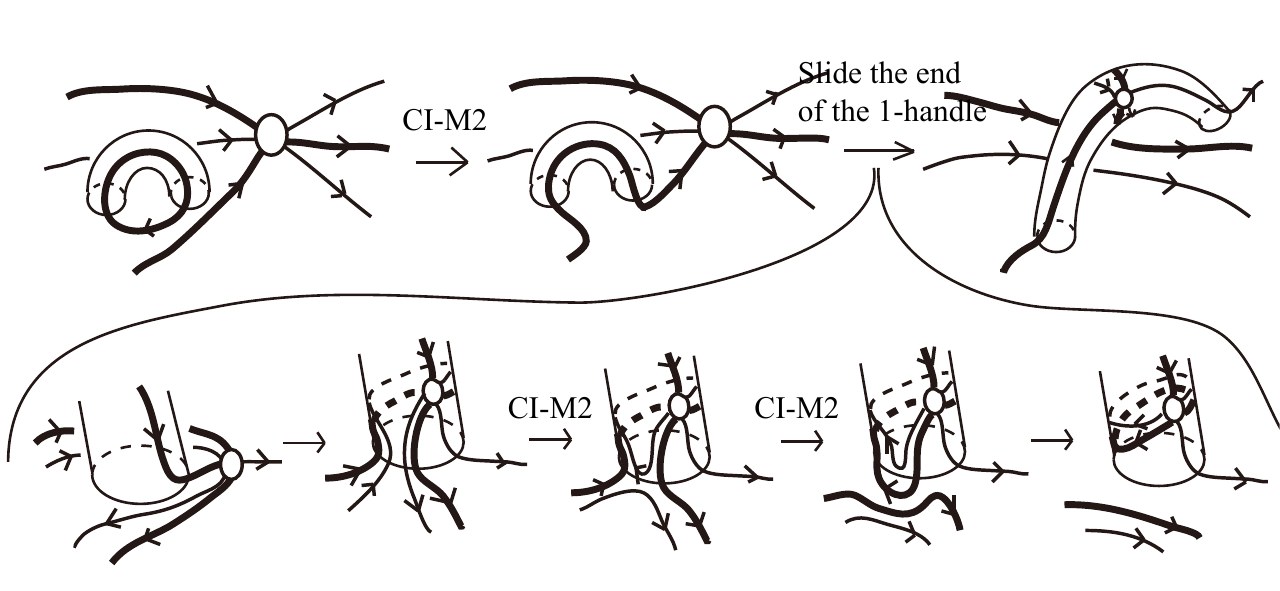}
\caption{Collecting a white vertex on a 1-handle. For simplicity, we omit labels of chart edges.}
\label{fig2a}
\end{figure}

Our results are as follow. Let $(F, \Gamma)$ be a branched covering surface-knot. Let $N$ be the degree of the chart $\Gamma$. We denote by $b(\Gamma)$, $c(\Gamma)$, and $w(\Gamma)$ the numbers of black vertices, crossings, and white vertices, respectively. 

In \cite[Proposition 1.11]{N5}, we obtained the inequality $u_w(F, \Gamma) \leq w(\Gamma)+2c(\Gamma)+N-1$. We showed it by deforming $(F, \Gamma)$ to a weak simplified form 
by sliding an end of a 1-handle to collect white vertices as indicated in Fig. \ref{fig2a}, and eliminating white vertices and then chart loops by a certain method; see \cite{N5}. By deforming $(F, \Gamma)$ to a weak simplified form by a new method, we obtain the following inequality.  

\begin{theorem}\label{thm1-7}
Let $(F, \Gamma)$ be a branched covering surface-knot of degree $N$. Let $b(\Gamma)$, $c(\Gamma)$, and $w(\Gamma)$ be the numbers of black vertices, crossings, and white vertices of $\Gamma$, respectively. 
We have 
\begin{equation}\label{eq1-4}
u_w(F, \Gamma)\leq \max \left\{ \left\lfloor \frac{w(\Gamma)}{2}+\frac{b(\Gamma)}{4} (N-2) \right\rfloor, c(\Gamma) \right\} +N-1, 
\end{equation}
where $\lfloor x\rfloor$ is the largest integer less than or equal to $x$. 
In particular, if $b(\Gamma)=0$, then 
\begin{equation}\label{eq1-5}
u_w(F, \Gamma)\leq \max \left\{ \frac{w(\Gamma)}{2}, c(\Gamma)\right\} +N-1.
\end{equation}
\end{theorem}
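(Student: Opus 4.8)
The plan is to deform $(F,\Gamma)$ to a weak simplified form by a procedure that processes the white vertices and crossings of $\Gamma$ \emph{in batches}, so that a single added 1-handle can absorb more than one white vertex at a time. Recall from Theorem~\ref{thm1-2} that once we have added $\sum_{i=1}^{N-1}h(\sigma_i,e)$ near a fixed 2-disk, we have access to the whole standard generating set of braids on the cocores of our 1-handles, and we may freely slide ends of 1-handles across $\Gamma$, performing C-moves in the neighborhoods swept out. These $N-1$ handles account for the additive $N-1$ term; everything else must be bounded by the maximum in~\eqref{eq1-4}. The two entries of that maximum suggest handling crossings and white vertices by genuinely different mechanisms: crossings will be removed one handle per crossing (giving the $c(\Gamma)$ term), while white vertices will be removed in a way that amortizes over groups, giving the $\lfloor w(\Gamma)/2 + (b(\Gamma)/4)(N-2)\rfloor$ term.

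First I would fix a 2-disk $E$ disjoint from $\Gamma$, add the $N-1$ handles $h(\sigma_i,e)$ there, and then for each crossing of $\Gamma$ slide an end of an $h(\sigma_i,e)$-type handle (after adjoining an appropriate $h(e,e)$ if a fresh handle is needed) up to a neighborhood of that crossing; a C-move local to a crossing converts the crossing into a configuration of edges running into the handle, so after $c(\Gamma)$ such moves no crossings remain. This is the step that contributes $c(\Gamma)$. Next, with the chart now crossing-free, I would attack the white vertices: by the move in Fig.~\ref{fig2a} one can slide an end of a 1-handle to collect a white vertex onto it, but the improvement over \cite[Proposition 1.11]{N5} comes from arranging that one handle collects \emph{two} white vertices — this is where the factor $w(\Gamma)/2$ arises — using that white vertices of a chart come with a sign and that adjacent oppositely-oriented pairs can be cancelled together once brought onto the same handle, via a C-move (a Reidemeister-III / "hexagon" type relation among $\sigma_i$'s). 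The black vertices enter only through the bookkeeping: each free edge or edge-end terminating in a black vertex contributes at most $N-2$ extra braid letters that must be carried along, and dividing by $4$ reflects that four black-vertex contributions can be consolidated onto a single handle, giving the $(b(\Gamma)/4)(N-2)$ correction. Taking the floor is forced because a handle can only ever absorb an integer amount of data. Finally, once $\Gamma$ has been reduced to free edges together with 1-handles carrying chart loops, the result is by definition a weak simplified form, and the number of 1-handles used is at most the displayed bound, proving~\eqref{eq1-4}; when $b(\Gamma)=0$ there is nothing to floor and~\eqref{eq1-5} follows.

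The main obstacle I expect is the white-vertex batching step: showing rigorously that two white vertices of opposite sign can always be brought onto a common 1-handle and cancelled there, rather than merely one at a time. This requires controlling the braid words created on the cocore and core loops as ends of handles are slid past edges of $\Gamma$ — each passage multiplies in a $\sigma_i^{\pm1}$ — and then verifying that the accumulated word, when the two white vertices meet, is C-move equivalent to one with the two vertices eliminated; the relations $\sigma_i\sigma_j=\sigma_j\sigma_i$ for $|i-j|>1$ and $\sigma_i\sigma_{i+1}\sigma_i=\sigma_{i+1}\sigma_i\sigma_{i+1}$ should suffice, but making the combinatorics uniform in $N$ and keeping the handle count at exactly $\lceil w(\Gamma)/2\rceil$-order is the delicate part. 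The black-vertex correction term $(b(\Gamma)/4)(N-2)$ will similarly need a careful count of how many standard generators a single handle can legitimately absorb before one is forced to introduce a new $h(e,e)$.
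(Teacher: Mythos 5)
Your overall architecture (the $N-1$ standard handles $h(\sigma_i,e)$, one handle per crossing, white vertices amortized in pairs) matches the paper's, but three load-bearing steps are missing or wrong. First, the $\max$ versus sum issue: you process crossings first, one handle per crossing, and then white vertices with further handles; as written this yields $c(\Gamma)+\lfloor\cdot\rfloor$, not $\max\{\lfloor\cdot\rfloor,c(\Gamma)\}$. The paper gets the max by doing white vertices \emph{first}: each pair of white vertices is cancelled across a \emph{bridge}, i.e.\ a 1-handle with empty chart (Lemma \ref{lem3-5}) joining neighborhoods of the two vertices, so after the cancellation those $w(\Gamma)/2$ handles are again bare $h(e,e)$'s and are recycled --- recharged to $h(\sigma_j,e)$ via Lemmas \ref{lem3-7}, \ref{lem3-8} and the standard handles --- to absorb one crossing each. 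A handle that has already absorbed a crossing carries the chart $h(\sigma_j,\sigma_k^\epsilon)$ and cannot serve as a bridge, so your ordering blocks the reuse that the bound requires. Second, the pairing of white vertices is not a matter of "signs" cancelling on a common cocore: you need the counting fact (Lemma \ref{lem3-9}) that when $b(\Gamma)=0$ the white vertices of type $w_{i,j}$ and of type $w_{j,i}$ occur in equal numbers (proved by counting oriented label-$i$ edge-ends inductively in $i$), and the cancellation of such a pair is a local CI-move picture across the bridge (Fig.\ \ref{fig12}), not an identity in the braid group accumulated on a handle. Without the counting lemma you cannot even assert that a matching into cancellable pairs exists.

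Third, your account of the $(b(\Gamma)/4)(N-2)$ term does not describe a workable mechanism. In the paper this term has nothing to do with "braid letters carried along" or "four contributions consolidated onto a handle": when $b(\Gamma)>0$ the types of white vertices need not pair up, so one first changes the labels of edges incident to black vertices (each unit label change creating one new white vertex, Fig.\ \ref{fig13}); since the edges at black vertices pair off by orientation (Lemma \ref{lem3-10}) and each may need up to $N-2$ label changes, this costs at most $(b(\Gamma)/2)(N-2)$ additional white vertices, after which the type-counting argument applies to the enlarged set and the division by $2$ for pairs produces $\lfloor w(\Gamma)/2+(b(\Gamma)/4)(N-2)\rfloor$. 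You correctly flagged the white-vertex batching as the delicate point, but the resolution is the bridge-plus-type-counting argument rather than control of accumulated braid words, and the black-vertex correction requires the label-changing preprocessing step you have not supplied.
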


\begin{theorem}\label{thm1-8}
Let $(F, \Gamma)$ be a branched covering surface-knot of degree $N$. 
 By an addition of finitely many 1-handles in the form $h(\sigma_i, e)$, or $h(e,e)$ $(i \in \{1, \ldots,N-1\})$, to appropriate places in $F$, $(F, \Gamma)$  deforms to  
\begin{equation}\label{eq:7}
(F, \Gamma_0) +\sum_{i=1}^{N-1}h(\sigma_i, e)+ \sum h(\sigma_1, \sigma_3^{\epsilon}) +\sum h(e,e), 
\end{equation}
where $\Gamma_0$ is a chart consisting of several free edges, and $\epsilon$ is a fixed sign valued in $\{ +1, -1\}$. 
\end{theorem}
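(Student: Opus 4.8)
The plan is to start from the weak simplified form provided by Theorem \ref{thm1-2} and then normalize the remaining non-standard $1$-handles, using only additions of $h(\sigma_i,e)$ and $h(e,e)$. If $N\le 3$ there is no pair $(i,j)$ with $|i-j|>1$, so the middle sum in (\ref{eq1-1}) is empty and (\ref{eq:7}) is already the assertion of Theorem \ref{thm1-2}; hence assume $N\ge 4$. By the second assertion of Theorem \ref{thm1-2}, which adds only $\sum_{i=1}^{N-1}h(\sigma_i,e)$ and finitely many $h(e,e)$ to a fixed $2$-disk, we may assume $(F,\Gamma)$ has been deformed to $(F,\Gamma_0)+\sum_{i=1}^{N-1}h(\sigma_i,e)+\sum_l h(\sigma_{i_l},\sigma_{j_l}^{\epsilon_l})+\sum h(e,e)$ with $|i_l-j_l|>1$. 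It then remains to replace the middle sum by a sum of copies of a single $h(\sigma_1,\sigma_3^{\epsilon})$, possibly at the cost of further $h(\sigma_i,e)$ and $h(e,e)$.

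The first step is an index normalization: each $h(\sigma_{i_l},\sigma_{j_l}^{\epsilon_l})$ is to be deformed to $h(\sigma_1,\sigma_3^{\epsilon_l})$. Because $|i_l-j_l|>1$, the generators $\sigma_{i_l}$ and $\sigma_{j_l}$ are half-twists on two disjoint pairs of adjacent strands, and an elementary argument in $B_N$ produces $w$ with $w^{-1}\sigma_{i_l}w=\sigma_1$ and $w^{-1}\sigma_{j_l}w=\sigma_3$ (slide the two bands to the standard positions $\{1,2\}$ and $\{3,4\}$; one may take $w$ to be a suitable positive permutation braid). Conjugating a $1$-handle by a single generator $\sigma_i^{\pm1}$ is realized by sliding an end of the handle across a chart loop labelled $i$; such loops are supplied by the handles $h(\sigma_i,e)$, and additional $h(\sigma_i,e)$ and $h(e,e)$ may be introduced as bookkeeping. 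Writing $w$ as a word in the $\sigma_i^{\pm1}$ and performing the corresponding slides one at a time --- legitimate by the handle-slide and C-move lemmas of \cite{N5} --- turns $h(\sigma_{i_l},\sigma_{j_l}^{\epsilon_l})$ into $h(\sigma_1,\sigma_3^{\epsilon_l})$. After this step every non-standard handle is a copy of $h(\sigma_1,\sigma_3^{+1})$ or of $h(\sigma_1,\sigma_3^{-1})$.

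The second step, which I expect to be the main obstacle, is the sign normalization. Let $p$ and $q$ be the numbers of copies of $h(\sigma_1,\sigma_3^{+1})$ and of $h(\sigma_1,\sigma_3^{-1})$ respectively; by the obvious symmetry assume $p\ge q$ (the case $p<q$ is identical with $+1$ and $-1$ exchanged, and yields $\epsilon=-1$). If $q=0$ there is nothing to do and $\epsilon=+1$. Otherwise fix one copy $A=h(\sigma_1,\sigma_3^{+1})$ and, for each copy $B=h(\sigma_1,\sigma_3^{-1})$, perform a $1$-handle slide of an end of $B$ over $A$: this multiplies the core-loop braid of $B$ by a conjugate of the core-loop braid $\sigma_3$ of $A$, and since $\sigma_1$ and $\sigma_3$ commute the conjugating factor can be taken trivial, so the core-loop braid $\sigma_3^{-1}$ of $B$ becomes $\sigma_3^{-1}\sigma_3=e$ while its cocore braid $\sigma_1$ is unchanged (up to framing, which we suppress); thus $B$ becomes the standard handle $h(\sigma_1,e)$, and $A$ is unaffected. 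Doing this for all $q$ negative copies leaves only the $p$ positive copies among the $h(\sigma_1,\sigma_3^{\epsilon})$ terms, together with $q$ new copies of $h(\sigma_1,e)$. The delicate point is to check that this slide is admissible here --- that the resulting diagram is again a chart of the required type, that the core loops stay mutually disjoint, and that the bookkeeping of the cocore and of the suppressed framings really does produce $h(\sigma_1,e)$ exactly --- which is where the C-move calculus of \cite{N5} has to be used with care. It is also the reason the $h(\sigma_1,\sigma_3^{\epsilon})$ terms cannot be removed outright by this method, in contrast with Theorem \ref{thm1-4}: eliminating a surviving positive copy would require a handle whose core loop is $\sigma_3^{-1}$, which is not among those we are allowed to add.

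Finally, a brief cleanup absorbs the surplus standard handles created above --- the extra copies of $h(\sigma_1,e)$, and any extra $h(\sigma_i,e)$ introduced during the index normalization --- into the prescribed collection $\sum_{i=1}^{N-1}h(\sigma_i,e)+\sum h(e,e)$, using the reduction moves of \cite{N5} that already underlie the passage from (\ref{eq1-1}) to (\ref{eq1-2}). This brings $(F,\Gamma)$ to the form (\ref{eq:7}) with $\epsilon=+1$ when $p\ge q$ and $\epsilon=-1$ when $p<q$, completing the proof.
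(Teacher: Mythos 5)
Your overall strategy is the same as the paper's: pass to the weak simplified form of Theorem \ref{thm1-2}, normalize each $h(\sigma_{i_l},\sigma_{j_l}^{\epsilon_l})$ to a handle of type $h(\sigma_1,\sigma_3^{\pm1})$, then cancel handles of opposite sign. The conclusion is correct, but the mechanism you give for the index normalization does not work. Sliding an end of a 1-handle across a chart loop labelled $i$ does not conjugate the handle's chart: as in Lemmas \ref{lem3-4} and \ref{lem3-8}, such a slide multiplies the core-loop braid by $\sigma_i^{\pm1}$ on one side and leaves the cocore braid untouched, so no sequence of these slides realizes $w^{-1}\sigma_{i_l}w=\sigma_1$ on the cocore. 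Changing the cocore label is precisely the delicate point, and the paper spends Lemmas \ref{lem4-3a} and \ref{lem4-5} on it: the core-loop index is lowered by collecting four crossings so as to realize the relation $\sigma_k^{\epsilon}\sigma_{k-1}^{\epsilon}\sigma_k^{\epsilon}\sigma_{k-1}^{-\epsilon}\sigma_k^{-\epsilon}=\sigma_{k-1}^{\epsilon}$ in the core-loop word, while the cocore index can only be lowered after swapping the roles of cocore and core loop via Lemma \ref{lem4-3a}, and that swap reverses the exponent. Consequently the correct output is $h(\sigma_1,\sigma_3^{\epsilon_l})$ when $i_l<j_l$ but $h(\sigma_1,\sigma_3^{-\epsilon_l})$ when $i_l>j_l$ (Lemma \ref{lem4-5}, relations (\ref{eq4-01}) and (\ref{eq4-02})), not always $h(\sigma_1,\sigma_3^{\epsilon_l})$ as a naive conjugation would predict; the two outcomes are distinguished by the crossing-sign count $s(\Gamma)$ of Section \ref{sec5}, so your conjugation step cannot be realized by the moves available. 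For Theorem \ref{thm1-8} itself this discrepancy is harmless, since both signs are produced anyway and are absorbed in the cancellation step, but as written the normalization step is unproved.

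There is also a slip in the sign normalization. Sliding an end of $B=h(\sigma_1,\sigma_3^{-1})$ along the chart loop of $A=h(\sigma_1,\sigma_3)$ does not leave $A$ unaffected: by (\ref{eq3-12}) the CI-M2 move consumes the loop of $A$ labelled $1$, giving $h(\sigma_1,e)+h(e,\sigma_3)$, after which both handles reduce to $h(e,e)$ using $\sum_{i=1}^{N-1}h(\sigma_i,e)$ via (\ref{eq3-9}) and (\ref{eq3-10}); this is exactly the paper's cancellation. So each cancellation uses up one handle of each sign rather than recycling a single positive copy $A$ against all $q$ negative ones. Since you assumed $p\ge q$ this still leaves only nonnegative-sign copies and yields (\ref{eq:7}), but the assertion that $A$ survives each slide is false and should be replaced by the pairwise cancellation of Lemma \ref{lem3-8}.
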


\begin{theorem}\label{thm1-9}
Let $(F, \Gamma)$ be a branched covering surface-knot of degree $N$. 
 By an addition of finitely many 1-handles in the form $h(\sigma_i, e)$, or $h(e,e)$ $(i \in \{1, \ldots,N-1\})$, to appropriate places in $F$, $(F, \Gamma)$  deforms to one of the followings: 
\begin{equation}\label{eq1-8x}
(F, \Gamma_0) +\sum_{i=1}^{N-1}h(\sigma_i, e)+\sum h(e,e),  
\end{equation}
or 
\begin{equation}\label{eq1-9x}
(F, \Gamma_0) +h(\sigma_1, \sigma_3)+\sum_{i=2}^{N-1}h(\sigma_i, e)+\sum h(e,e), 
\end{equation}
where $\Gamma_0$ is a chart consisting of several free edges. 

In particular, if $w(\Gamma)=0$ or $b(\Gamma)=0$, then $(F, \Gamma)$ deforms to $(\ref{eq1-8x})$ if $c(\Gamma)$ is even, and $(\ref{eq1-9x})$ if $c(\Gamma)$ is odd. Here $w(\Gamma)$, $b(\Gamma)$ and $c(\Gamma)$ are the numbers of white vertices, black vertices and crossings of $\Gamma$, respectively. We write down the case when $b(\Gamma)=0$ as follows. If $b(\Gamma)=0$, then $(F, \Gamma)$ deforms to   
\begin{equation}\label{eq1-8y}
(F, \emptyset) +\sum_{i=1}^{N-1}h(\sigma_i, e)+\sum h(e,e), \text{ if $c(\Gamma)$ is even,} 
\end{equation}
and  
\begin{equation}\label{eq1-9y}
(F, \emptyset) +h(\sigma_1, \sigma_3)+\sum_{i=2}^{N-1}h(\sigma_i, e)+\sum h(e,e), \text{ if $c(\Gamma)$ is odd.} 
\end{equation}
\end{theorem}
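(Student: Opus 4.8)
The plan is to start from the weak simplified form, use the abundant handles $\sum_{i=1}^{N-1}h(\sigma_i,e)$ to cancel the ``defect'' handles $h(\sigma_1,\sigma_3^{\epsilon})$ two at a time, and then read off the parity obstruction to cancelling the last one. Throughout, write $X\sim Y$ when $X$ and $Y$ become equivalent branched covering surface-knots after adjoining finitely many $1$-handles of the form $h(\sigma_i,e)$ or $h(e,e)$; these are exactly the moves the statement permits.

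By Theorem \ref{thm1-8}, $(F,\Gamma)$ deforms to
\[
(F,\Gamma_0)+\sum_{i=1}^{N-1}h(\sigma_i,e)+\sum_{j=1}^{m}h(\sigma_1,\sigma_3^{\epsilon})+\sum h(e,e)
\]
for some $m\ge 0$ and a fixed sign $\epsilon$, with $\Gamma_0$ a union of free edges. So the whole theorem reduces to one cancellation lemma: for any signs $\epsilon_1,\epsilon_2$,
\[
h(\sigma_1,\sigma_3^{\epsilon_1})+h(\sigma_1,\sigma_3^{\epsilon_2})\ \sim\ (\text{no defect handle}).
\]
Indeed, applying this to pairs among the $m$ identical defect handles leaves none of them when $m$ is even and one when $m$ is odd; and a lone $h(\sigma_1,\sigma_3^{-1})$ is converted to $h(\sigma_1,\sigma_3^{+1})$ by adjoining the $\sim$-trivial pair $h(\sigma_1,\sigma_3^{+1})+h(\sigma_1,\sigma_3^{+1})$ and regrouping it with the lemma in its mixed-sign form. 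Hence $(F,\Gamma)$ deforms to $(\ref{eq1-8x})$ when $m$ is even and to $(\ref{eq1-9x})$ when $m$ is odd.

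The cancellation lemma is the crux. The tool is handle sliding: sliding an end of one $1$-handle over the tube of another multiplies the braid presented by the core loop (resp.\ the cocore) of the moved handle by the braid presented by the cocore (resp.\ the core loop) of the handle it is slid over, with a controlled effect on the crossing count. Since every braid occurring is a power of the commuting generators $\sigma_1,\sigma_3$, the bookkeeping stays abelian, and a chart loop that comes to present the trivial braid is removed by a C-move as in \cite{N5}. To prove the lemma I would slide one defect handle over the other, and both over the handles $h(\sigma_1,e)$ and $h(\sigma_3,e)$ of the standard set, arranging that the cocore braid of each defect handle is trivialized; the single crossing carried by each defect handle is then annihilated against the other during the slides, and the two resulting trivial-cocore handles, whose core loops now present powers of $\sigma_3$, are pushed off their tubes onto the attaching disk as free chart loops and deleted. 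Keeping the crossing count consistent at every stage is the step I expect to be most delicate; that this scheme works for a \emph{pair} but necessarily fails for a single defect handle is exactly what the parity invariant below reflects.

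For the last part, observe that when $b(\Gamma)=0$ or $w(\Gamma)=0$ the parity $c(\Gamma)\bmod 2$ is carried along unchanged: the number of black vertices (resp., modulo $2$, of white vertices) is preserved by C-moves, every C-move available to a chart with no black vertices (resp.\ no white vertices) changes the number of crossings by an even integer -- the only subtle cases being the moves that slide a black vertex, or a white vertex, past an edge, where $c$ changes by $\pm2$ -- and adjoining $h(\sigma_i,e)$ or $h(e,e)$ adds no crossing while adjoining $h(\sigma_1,\sigma_3)$ adds exactly one. If $b(\Gamma)=0$ the chart stays black-vertex-free, so $\Gamma_0$ must be empty and the targets are literally $(\ref{eq1-8y})$ and $(\ref{eq1-9y})$, with $c=0$ and $c=1$; if $w(\Gamma)=0$ one first normalizes $\Gamma_0$ so that its free edges carry no crossings, and again the two targets have $c=0$ and $c=1$. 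Comparing parities, $(F,\Gamma)$ must deform to $(\ref{eq1-8x})$ (resp.\ $(\ref{eq1-8y})$) when $c(\Gamma)$ is even and to $(\ref{eq1-9x})$ (resp.\ $(\ref{eq1-9y})$) when $c(\Gamma)$ is odd. The only work here is the finite check over the list of C-moves that each changes $c$ by an even amount in these two special settings.
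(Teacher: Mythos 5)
Your overall architecture (normalize all defects to handles $h(\sigma_1,\sigma_3^{\pm1})$ via Theorem \ref{thm1-8}, then kill them two at a time and read off a parity) matches the paper's, but the crux — the ``cancellation lemma'' for a \emph{same-sign} pair — is exactly where your sketch cannot close, and it is the only case you actually need, since Theorem \ref{thm1-8} outputs defects all of one fixed sign. The obstruction is the quantity $s(\Gamma)=\sum_c s(c)$ (the signed count of crossings) introduced before Proposition \ref{prop1-9}: every handle-sliding operation you invoke (Lemmas \ref{lem3-7}, \ref{lem3-8}, \ref{lem4-2}, \ref{lem4-3}, and their inverses) preserves $s$, because crossings are only transported from one handle to another or annihilated in opposite-type pairs $\{c_{i,j},c_{j,i}\}$. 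A mixed-sign pair $h(\sigma_1,\sigma_3^{-1})+h(\sigma_1,\sigma_3)$ has $s=0$ and indeed cancels by a single application of Lemma \ref{lem3-8} — that is already the argument in the proof of Theorem \ref{thm1-8}. But a same-sign pair has $s=-2\epsilon\neq 0$, while your target configuration has $s=0$, so no sequence of slides with ``abelian bookkeeping'' in the commuting generators $\sigma_1,\sigma_3$ can realize it: after merging you are left with $h(\sigma_1,\sigma_3^{\pm2})$, whose two crossings are of the \emph{same} type and do not annihilate. Your own caveat that ``keeping the crossing count consistent is the most delicate step'' is the symptom; it is not delicate but impossible by that route. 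For the same reason you cannot manufacture the auxiliary pair $h(\sigma_1,\sigma_3)+h(\sigma_1,\sigma_3)$ out of permitted handles $h(\sigma_i,e)$, $h(e,e)$ by slides alone.

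The paper supplies the missing non-$s$-preserving ingredient in Lemma \ref{lemma1-9} (relation (\ref{eq4-7a})): a CI-M4 move applied to the braid word $\sigma_i\sigma_j\sigma_i\sigma_k\sigma_j\sigma_i$ with $|i-j|=|j-k|=1$, $|i-k|>1$, temporarily creating eight white vertices and six crossings, which is what lets the exponent of $\sigma_3$ on the defect handle drop by $2$. Note this step genuinely leaves the ``abelian'' world: it uses the adjacent generator $\sigma_2$ and white-vertex creation. Your concluding parity argument is in the right spirit, but it too should be run through $s\bmod 2$ (equivalently $c\bmod 2$, since all crossings of a chart involve labels with $|i-j|>1$) tracked along the \emph{specific} deformations used, as in the proof of Proposition \ref{prop1-9}, rather than by an unverified claim that every C-move changes $c$ by an even amount. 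To repair the proof, replace your cancellation lemma by Proposition \ref{prop1-9} together with Lemma \ref{lemma1-9}, or prove an analogue of (\ref{eq4-7a}) yourself; without some use of CI-M4 (or another white-vertex-creating move) the theorem's dichotomy cannot be reached from the weak simplified form.
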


\begin{corollary}\label{thm1-11}
Let $(F, \Gamma)$ be a branched covering surface-knot of degree $N$. Let $w(\Gamma)$ and  $b(\Gamma)$ be the numbers of white vertices and black vertices of $\Gamma$, respectively. 

If $w(\Gamma)=0$, then 

\begin{equation}\label{eq1-12}
u_w(F, \Gamma)\leq N. 
\end{equation}

Further, 
\begin{equation}\label{eq1-10}
u_w(F, \Gamma)\leq \max\left\{ 1,\left \lfloor \frac{w(\Gamma)}{2}+\frac{b(\Gamma)}{4} (N-2)\right\rfloor \right\}+N-1, 
\end{equation}
where $\lfloor x\rfloor$ is the largest integer less than or equal to $x$. 
In particular, if $b(\Gamma)=0$, then
\begin{equation}\label{eq1-11}
u_w(F, \Gamma)\leq \max \left\{ 1, \frac{w(\Gamma)}{2} \right\} +N-1.
\end{equation}

 Further, the inequalities $(\ref{eq1-12})$, $(\ref{eq1-10})$ and $(\ref{eq1-11})$ also hold true for $u(F, \Gamma)$. 
\end{corollary}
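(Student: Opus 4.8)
The plan is to deduce the corollary from Theorems \ref{thm1-7}, \ref{thm1-8} and \ref{thm1-9}, reopening the constructions behind Theorems \ref{thm1-7} and \ref{thm1-9} so as to share $1$-handles between the two and make the counts sharp. For (\ref{eq1-12}) assume $w(\Gamma)=0$. By Theorem \ref{thm1-9}, $(F,\Gamma)$ then deforms, by an addition of $1$-handles of the forms $h(\sigma_i,e)$ and $h(e,e)$, to one of (\ref{eq1-8x}) or (\ref{eq1-9x}) according to the parity of $c(\Gamma)$, and both of these are weak simplified forms: (\ref{eq1-8x}) is of the shape (\ref{eq1-3}) and (\ref{eq1-9x}) is an instance of (\ref{eq1-1}), the term $h(\sigma_1,\sigma_3)$ playing the role of an $h(\sigma_{i_l},\sigma_{j_l}^{\epsilon_l})$-term since $|1-3|=2>1$. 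Counting the $1$-handles --- the $N-1$ handles written explicitly in (\ref{eq1-8x}) or (\ref{eq1-9x}), plus at most one trivial handle $h(e,e)$ that the construction may introduce --- gives $u_w(F,\Gamma)\le N$; for $N\le 3$ the form (\ref{eq1-9x}) does not arise and the small cases are immediate. The first thing I would pin down here is exactly how many trivial handles the proof of Theorem \ref{thm1-9} introduces.

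For the general inequality (\ref{eq1-10}) I would interleave the two procedures. First run the white-vertex-collecting argument from the proof of Theorem \ref{thm1-7}: slide ends of $1$-handles to gather and then eliminate the white vertices of $\Gamma$, together with the black vertices, as in Fig. \ref{fig2a}; this uses $\lfloor\tfrac{w(\Gamma)}{2}+\tfrac{b(\Gamma)}{4}(N-2)\rfloor$ working $1$-handles and leaves a chart with no white vertices. The same working handles can then be slid across the crossings, so that --- by the crossing-absorbing method behind Theorems \ref{thm1-8} and \ref{thm1-9} --- the crossings are removed up to their parity, and no extra handle is needed to fix the residual parity provided at least one working handle is present, i.e. provided $\lfloor\tfrac{w(\Gamma)}{2}+\tfrac{b(\Gamma)}{4}(N-2)\rfloor\ge 1$; when it equals $0$ one falls back on (\ref{eq1-12}). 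Finally $N-1$ further handles $\sum_{i=1}^{N-1}h(\sigma_i,e)$ realise the simplified form. Collecting the counts gives $u_w(F,\Gamma)\le\max\{1,\lfloor\tfrac{w(\Gamma)}{2}+\tfrac{b(\Gamma)}{4}(N-2)\rfloor\}+N-1$, and substituting $b(\Gamma)=0$, where $w(\Gamma)$ is even so that the floor equals $w(\Gamma)/2$, gives (\ref{eq1-11}).

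For the assertions about $u(F,\Gamma)$, recall that $u_w\le u$ in general, so these do not follow for free from the bounds on $u_w$; one must check that the deformations above can be arranged to end at a \emph{simplified} form (\ref{eq1-3}) with the same number of $1$-handles. When $c(\Gamma)$ is even the targets are already of the shape (\ref{eq1-3}). When $c(\Gamma)$ is odd the relevant target differs from (\ref{eq1-3}) only by the single term $h(\sigma_1,\sigma_3)$; then $N\ge 4$, so the handle $h(\sigma_3,e)$ is also present, and a handle slide --- sliding the core loop of $h(\sigma_1,\sigma_3)$ across the cocore of $h(\sigma_3,e)$ --- turns $h(\sigma_1,\sigma_3)$ into a handle of the form $h(\sigma_1,e)$ without adding any new $1$-handle (this is the sort of move underlying Theorem \ref{thm1-4}). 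Hence the configuration is brought to the shape (\ref{eq1-3}) with the same handle count, and (\ref{eq1-12}), (\ref{eq1-10}) and (\ref{eq1-11}) hold with $u_w$ replaced by $u$.

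The step I expect to be the main obstacle is the sharp bookkeeping in the second paragraph: showing that the $1$-handle needed to correct the residual crossing parity is genuinely supplied by the white-vertex-elimination handles whenever $\lfloor\tfrac{w(\Gamma)}{2}+\tfrac{b(\Gamma)}{4}(N-2)\rfloor\ge 1$, rather than contributing an extra unit to the count --- this is exactly what distinguishes (\ref{eq1-10}) from the weaker estimate $\lfloor\tfrac{w(\Gamma)}{2}+\tfrac{b(\Gamma)}{4}(N-2)\rfloor+1+(N-1)$ that one gets by running the two constructions independently --- and it requires a simultaneous analysis of the two handle-sliding procedures.
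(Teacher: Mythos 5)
Your treatment of the two bounds for $u_w$ is essentially the paper's argument: the paper proves (\ref{eq1-12}) by invoking Proposition \ref{prop1-9} (\ref{eq-prop}) together with Lemma \ref{lemma1-9} (\ref{eq4-7a}) (which is exactly the machinery behind Theorem \ref{thm1-9}), with the count $1+(N-1)=N$ coming from $\sum_{i=1}^{N-1}h(\sigma_i,e)$ plus the single $h(e,e)$ needed to run Lemma \ref{lemma1-9}; and it proves (\ref{eq1-10}) by first killing the white vertices with at most $\lfloor w(\Gamma)/2+b(\Gamma)(N-2)/4\rfloor$ handles as in the proof of Theorem \ref{thm1-7} and then reusing those handles in the $w=0$ argument, which is the interleaving you describe. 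The point you flag as needing verification (how many trivial handles Theorem \ref{thm1-9} consumes) is settled in the paper by the remark after its proof: $\max\{1,\lfloor w(\Gamma)/2\rfloor\}+N-1$ handles suffice when $w(\Gamma)=0$ or $b(\Gamma)=0$.

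The genuine gap is in your last paragraph, the assertions for $u(F,\Gamma)$. The claimed handle slide --- absorbing the $\sigma_3$-loop of $h(\sigma_1,\sigma_3)$ into $h(\sigma_3,e)$ --- does not produce a simplified form. By Lemma \ref{lem3-8} (\ref{eq3-12a}) this move gives $h(\sigma_3,e)+h(\sigma_1,\sigma_3)\sim h(\sigma_3,\sigma_1^{-1})+h(\sigma_1,e)$: the crossing is not destroyed, it is merely transferred to the other 1-handle, which is now of the form $h(\sigma_i,\sigma_j^{\epsilon})$ and hence still not admissible in (\ref{eq1-3}). Indeed no sequence of the slides from Lemmas \ref{lem3-7}--\ref{lem4-3} can remove a single residual crossing, because the quantity $s(\Gamma')$ used in the proof of Proposition \ref{prop1-9} is invariant under all of them, while passing from (\ref{eq1-9x}) to (\ref{eq1-8x}) would change it by $1$. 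The paper's fix is different: since the simplifying number permits adding 1-handles of the form $h(\sigma_i,\sigma_j^{\epsilon})$ with $|i-j|>1$, one adds $h(\sigma_1,\sigma_3^{-1})+\sum_{i=2}^{N-1}h(\sigma_i,e)$ \emph{from the start} when the residual parity is odd; Proposition \ref{prop1-9} (\ref{eq-prop2}) then shows the collected crossing power becomes $\sigma_3^{-s-1}$, which Lemma \ref{lemma1-9} reduces to $e$, yielding (\ref{eq1-3}) with the same handle count $N$ (resp. $\max\{1,\lfloor\cdot\rfloor\}+N-1$). You need this parity-shifting initial handle; without it the $u$ statements do not follow. (A minor point: you invoke ``$u_w\le u$ in general,'' but the paper explicitly demotes this to Conjecture \ref{conj}; your use of it is inessential, since your conclusion there is only that the $u$ bounds require a separate argument.)
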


\begin{corollary}\label{thm1-10}
Let $(F, \Gamma)$ be a branched covering surface-knot of degree $N$. 
Then 
\begin{equation}\label{eq1-8a}
u(F, \Gamma)\leq 
\max\{1, u_w(F, \Gamma)\}+N-1. 
\end{equation}
\end{corollary}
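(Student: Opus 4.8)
The plan is to realize $u_w(F,\Gamma)$ by a deformation to a weak simplified form and then to show that any weak simplified form is upgraded to a simplified form at the cost of at most $N-1$ further $1$-handles.

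By Definition \ref{def1-7} there is a sequence of $m:=u_w(F,\Gamma)$ $1$-handle additions carrying $(F,\Gamma)$ to a branched covering surface-knot $(F,\Delta)$ in the weak simplified form
\[
(F,\Gamma_0)+\sum_{k}h(\sigma_{i_k},e)+\sum_{l}h(\sigma_{i_l},\sigma_{j_l}^{\epsilon_l})+\sum h(e,e),\qquad |i_l-j_l|>1,
\]
with $\Gamma_0$ a union of free edges. If $m=0$ then $\Delta=\Gamma_0$ consists only of free edges, so $(F,\Gamma)$ is already of the form $(\ref{eq1-3})$ and $u(F,\Gamma)=0\le\max\{1,0\}+N-1$; hence we may assume $m\ge 1$, and it suffices to prove $u(F,\Gamma)\le m+N-1$.

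For the main step, starting from $(F,\Delta)$, first add to a fresh $2$-disk in $F$ a $1$-handle $h(\sigma_i,e)$ for each $i\in\{1,\dots,N-1\}$ not already occurring among the $i_k$; this costs at most $N-1$ $1$-handles and makes a copy of $h(\sigma_i,e)$ available for every $i$. Now remove the mixed handles one at a time: for a fixed $l$, use $h(\sigma_{j_l},e)$ to absorb the label $\sigma_{j_l}^{\epsilon_l}$ on the core loop of $h(\sigma_{i_l},\sigma_{j_l}^{\epsilon_l})$, by sliding an end of the mixed handle past a parallel copy coming from $h(\sigma_{j_l},e)$ and applying C-moves -- here the hypothesis $|i_l-j_l|>1$, i.e.\ that $\sigma_{i_l}$ and $\sigma_{j_l}$ commute, is precisely what lets the two chart loops pass through each other without creating white vertices, so that $h(\sigma_{i_l},\sigma_{j_l}^{\epsilon_l})$ becomes $h(\sigma_{i_l},e)$ and $h(\sigma_{j_l},e)$ is restored, with no change in the number of $1$-handles. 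Iterating over $l$ removes every mixed handle and leaves $(F,\Gamma_0)+\sum h(\sigma_i,e)+\sum h(e,e)$, which is of the form $(\ref{eq1-3})$; hence $u(F,\Gamma)\le m+(N-1)$, and together with the case $m=0$ this yields $(\ref{eq1-8a})$.

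I expect the main obstacle to be the handle-absorption step: one must verify that a mixed handle is genuinely killed by the matching $h(\sigma_j,e)$ using only handle slides and C-moves, spending no additional $1$-handle and introducing no white vertices or crossings that would force further $1$-handles, and that the $h(e,e)$ summands of $\Delta$ are carried along harmlessly. An alternative route that may simplify the bookkeeping is to apply Theorem \ref{thm1-9} to $(F,\Delta)$ first -- legitimate since $w(\Delta)=0$, as none of a free edge, $h(\sigma_i,e)$, $h(\sigma_i,\sigma_j^{\epsilon})$, $h(e,e)$ contains a white vertex -- thereby reducing to at most one remaining mixed handle $h(\sigma_1,\sigma_3)$ as in $(\ref{eq1-9x})$, which is then absorbed into the $h(\sigma_3,e)$ already present; one still has to bound the cost over $(F,\Delta)$ by $N-1$.
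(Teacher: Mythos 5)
Your top-level skeleton (realize $u_w$ by passing to a weak simplified form, then spend at most $N-1$ further handles to upgrade it) matches the paper's, but the handle-absorption step at the core of your second stage does not work, and it is precisely the obstacle that the paper's proof is built to get around. When you slide an end of $h(\sigma_{i_l},\sigma_{j_l}^{\epsilon_l})$ past the chart loop of $h(\sigma_{j_l},e)$, the hypothesis $|i_l-j_l|>1$ lets the two loops pass through each other, but they pass through at a \emph{crossing}, which is not destroyed: by Lemma \ref{lem3-8} (relation (\ref{eq3-12a})) one has $h(\sigma_{j_l},e)+h(\sigma_{i_l},\sigma_{j_l}^{\epsilon_l})\sim h(\sigma_{j_l},\sigma_{i_l}^{-\epsilon_l})+h(\sigma_{i_l},e)$, so the crossing is merely transferred to the other handle; Lemma \ref{lem4-3a} records exactly this phenomenon. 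None of the moves you invoke changes the number of crossings, whereas the only crossing-destroying mechanism in this calculus (Lemma \ref{lemma1-9}, via a CI-M4 move) removes crossings two at a time. This is why Theorem \ref{thm1-9} ends in the two cases (\ref{eq1-8x}) and (\ref{eq1-9x}) rather than always in a simplified form, and why your alternative route fails at the same point: the single leftover $h(\sigma_1,\sigma_3)$ cannot be ``absorbed into the $h(\sigma_3,e)$ already present.'' Your treatment of $m=0$ is also incorrect: $u_w(F,\Gamma)=0$ means $(F,\Gamma)$ is already of the form (\ref{eq1-1}), which may contain mixed handles $h(\sigma_{i_l},\sigma_{j_l}^{\epsilon_l})$, so $\Delta$ need not reduce to $\Gamma_0$; the paper's Example with $u_w=0$ and $u=N>0$ refutes your conclusion $u(F,\Gamma)=0$ in that case.

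What the paper actually does after reaching the weak simplified form (whose chart indeed has $w=0$, as you observe) is a parity argument. It adds $N-1$ handles chosen according to the sign count $s(\Gamma')$ of the crossings: either $\sum_{i=1}^{N-1}h(\sigma_i,e)$ (Proposition \ref{prop1-9}, form (\ref{eq-prop})) or $h(\sigma_1,\sigma_3^{-1})+\sum_{i=2}^{N-1}h(\sigma_i,e)$ (form (\ref{eq-prop2})), the latter deliberately injecting one extra crossing to make the total even; the remaining crossings, now concentrated as $h(\sigma_1,\sigma_3^{\text{even}})$, are then cancelled in pairs by Lemma \ref{lemma1-9} (relation (\ref{eq4-7a})), which is where the CI-M4 move enters and the crossing count actually drops. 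The $\max\{1,\cdot\}$ in the bound pays for the $h(e,e)$ that Lemma \ref{lemma1-9} requires when $u_w=0$. Any correct proof must contain this parity mechanism or an equivalent; your proposal contains no step that changes the number of crossings, so the gap is essential rather than a matter of bookkeeping.
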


In \cite[Proposition 1.10]{N5}, we announced that $u_w(F, \Gamma) \leq u(F, \Gamma) \leq u_w(F, \Gamma)+c_{\mathrm{alg}}(\Gamma)$, where $c_{\mathrm{alg}}(\Gamma)$ is the sum of the absolute values of the numbers each of which is the number of crossings of type $c_{i,j}$ minus that of type $c_{j,i}$ ($i<j$, $i,j \in \{1, \ldots, N-1\}$); see Remark \ref{rem-4}. In the proof for $u_w(F, \Gamma) \leq u(F, \Gamma)$, we said it is obvious, but it is not. 
We thought that by an addition of 1-handles in the form $h(\sigma_i, e)$ or $h(e,e)$, we first deform $(F, \Gamma)$ to a branched covering surface-knot $(F', \Gamma')$ in a weak simplified form (\ref{eq1-1}), and then by an addition of 1-handles in the form $h(\sigma_i, \sigma_j^\epsilon)$ ($|i-j|>1, \epsilon \in \{+1, -1\}$), we deform $(F', \Gamma')$ to a simplified form (\ref{eq1-3}). We forgot that we can add $h(\sigma_i, \sigma_j^\epsilon)$ from the first. We correct the statement to a conjecture. 
\begin{conjecture}\label{conj}
Let $(F, \Gamma)$ be a branched covering surface-knot. We conjecture that 
\begin{equation}
u_w(F, \Gamma) \leq u(F, \Gamma).
\end{equation}
\end{conjecture}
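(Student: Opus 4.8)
The plan is to start from a deformation realizing $u(F,\Gamma)$ and to remove the ``forbidden'' 1-handles from it without increasing the count. By Theorem \ref{thm1-4} there is a finite process consisting of additions of 1-handles of types $h(\sigma_i,e)$, $h(\sigma_i,\sigma_j^{\epsilon})$ $(|i-j|>1)$ and $h(e,e)$, together with C-moves and handle slides, using exactly $n:=u(F,\Gamma)$ handles, and terminating at a simplified form $S=(F,\Gamma_0)+\sum_k h(\sigma_{i_k},e)+\sum h(e,e)$. Since the shape (\ref{eq1-3}) is the special case of (\ref{eq1-1}) with no $h(\sigma_i,\sigma_j^{\epsilon})$-terms, $S$ is already a weak simplified form; the only obstruction to reading off $u_w(F,\Gamma)\le n$ from this process is that it uses handles $h(\sigma_i,\sigma_j^{\epsilon})$, which are not permitted for $u_w$. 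Hence it suffices to rewrite the process so that it uses only $h(\sigma_i,e)$ and $h(e,e)$, still with at most $n$ handles, ending at some (possibly different) chart of shape (\ref{eq1-1}).

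The key step I would attempt is a transfer lemma. Whenever the process adds a handle $h=h(\sigma_i,\sigma_j^{\epsilon})$, replace this by the addition of $h(\sigma_i,e)$ and record the removed core-loop label as a free chart loop (hoop) $\lambda$ labelled $\sigma_j^{\epsilon}$ running parallel to the core loop of that handle; at the instant of the replacement the two configurations are C-move equivalent, since $\sigma_j^{\epsilon}$ commutes with the cocore label $\sigma_i$. Now carry $\lambda$ forward through the remaining C-moves and handle slides. Each time $\lambda$ is pushed across a passage through a 1-handle its label is conjugated by that handle's cocore label, so one must check that it is only ever replaced by $\sigma_j^{\pm\epsilon}$ and that the ``distance $>1$'' condition relative to whatever $\lambda$ meets is maintained along the way. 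At the end of the process $\lambda$ survives as a hoop encircling the core loop of one of the 1-handles of the target, i.e. it contributes exactly one permitted term $h(\sigma_{i'},\sigma_j^{\pm\epsilon})$ with $|i'-j|>1$. Performing this for each of the finitely many forbidden handles, by induction on their number, converts the process into a deformation of $(F,\Gamma)$, using $n$ handles all of type $h(\sigma_i,e)$ or $h(e,e)$, ending at $S$ with finitely many terms $h(\sigma_{i'},\sigma_j^{\pm\epsilon})$ adjoined, which is again of shape (\ref{eq1-1}). This would give $u_w(F,\Gamma)\le n=u(F,\Gamma)$.

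The hard part will be controlling $\lambda$ through moves that genuinely involve the handle whose core label was removed. Two cases look delicate: a handle slide that slides the end of another handle over $h$ (so that after the replacement the sliding handle ``sees'' a different chart), and a C-move that, in the original process, cancels $h$ against a residual $h(\sigma_i,\sigma_j^{-\epsilon})$-obstruction coming from $\Gamma$ itself --- this cancellation is precisely the mechanism behind the inequality $u(F,\Gamma)\le u_w(F,\Gamma)+c_{\mathrm{alg}}(\Gamma)$ of \cite{N5}. After the replacement this cancellation no longer occurs, and one must argue that the net effect is still only to adjoin permitted $h(\sigma_\bullet,\sigma_\bullet^{\pm})$-terms, not to destroy the normal form. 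Equivalently, one would like a handle-slide identity of the shape $h(\sigma_i,\sigma_j^{\epsilon})+h(e,e)\sim h(\sigma_i,e)+h(\sigma_j,\sigma_i^{-\epsilon})$ (or a variant with a free edge in place of the auxiliary trivial handle), plus a bookkeeping argument that such an auxiliary $h(e,e)$ is always available within the given process --- or can be traded for one of the $h(e,e)$'s the process itself introduces --- so that the total number of handles does not grow. Making either the hoop-tracking argument or the identity-plus-bookkeeping argument precise is the whole content of the conjecture; if it cannot be carried through, the natural place to look for a counterexample is a chart whose weak simplification is forced to be expensive but which nonetheless admits a short simplification because an early addition of some $h(\sigma_i,\sigma_j^{\epsilon})$ triggers a cascade of cancellations of residual obstructions that no sequence of $h(\sigma_i,e)$'s can reproduce.
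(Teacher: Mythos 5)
The statement you are addressing is stated in the paper as Conjecture \ref{conj}, not as a theorem: the author explicitly retracts the earlier claim from \cite{N5} that $u_w(F,\Gamma)\leq u(F,\Gamma)$ is obvious, for exactly the reason you identify (a minimal process realizing $u(F,\Gamma)$ may add handles $h(\sigma_i,\sigma_j^{\epsilon})$ from the outset, and these are not permitted additions in the process defining $u_w$). So there is no proof in the paper to compare against, and your proposal would, if completed, settle an open problem. It does not: you concede yourself that ``making either the hoop-tracking argument or the identity-plus-bookkeeping argument precise is the whole content of the conjecture,'' and the unproved steps are precisely where the difficulty lives.

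Concretely: (i) after you detach the core-loop label of $h(\sigma_i,\sigma_j^{\epsilon})$ as a hoop $\lambda$ labelled $j$, carrying $\lambda$ forward through the remaining moves is only free across edges whose labels differ from $j$ by more than $1$ (CII-moves); where $\lambda$ must cross an edge of label $j\pm 1$ or pass a white vertex, the original sequence of C-moves need not lift, and $\lambda$ can force new white vertices or obstruct a CIII-move the original process relied on, so the modified process need not terminate in shape (\ref{eq1-1}) at all. (ii) Even if $\lambda$ survives as a hoop, nothing guarantees it ends up encircling a handle $h(\sigma_{i'},e)$ with $|i'-j|>1$; it could link a handle with $|i'-j|\leq 1$, or sit in the region carrying $\Gamma_0$, neither of which is a permitted term of (\ref{eq1-1}). (iii) The substitute identity you propose is essentially the reverse of (\ref{eq3-12a}) and needs a handle $h(\sigma_j,\cdot)$, not $h(e,e)$, to absorb the loop; more to the point, it still has the forbidden type $h(\sigma_i,\sigma_j^{\epsilon})$ on one side when read as an \emph{addition}, so applying it does not by itself convert a $u$-process into a $u_w$-process, and the bookkeeping that keeps the handle count at $u(F,\Gamma)$ is missing. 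As written, your argument is a reasonable plan of attack on the conjecture (whose resolution the paper defers to \cite{N7}), but it is not a proof.
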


For $u(F, \Gamma) \leq u_w(F, \Gamma)+c_{\mathrm{alg}}(\Gamma)$, the proof given in \cite{N5} was for a special case; see Remark \ref{rem-4}. 
We correct the statement to a conjecture.

\begin{conjecture}\label{conj2}
Let $(F, \Gamma)$ be a branched covering surface-knot of degree $N$. We conjecture that 
\begin{equation}\label{conj5}
u(F, \Gamma)\leq u_w(F, \Gamma)+c_{\mathrm{alg}}(\Gamma),
\end{equation}
where $c_{\mathrm{alg}}(\Gamma)$ is the sum of the absolute values of the numbers each of which is the number of crossings of type $c_{i,j}$ minus that of type $c_{j,i}$ 
$(i<j, i,j \in \{1, \ldots, N-1\})$; see Remark \ref{rem-4}. 
Further, by the proof of Corollary \ref{thm1-10}, we conjecture that 
\begin{equation}
u(F, \Gamma)\leq \max \{u_w(F, \Gamma), N-1\}+1,  
\end{equation}
and further, we conjecture that 
\begin{equation}
u(F, \Gamma)\leq \max \{u_w(F, \Gamma), N-1\}.  
\end{equation}
\end{conjecture}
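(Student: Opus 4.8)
The plan is to establish the three inequalities of Conjecture \ref{conj2} in turn, proving $u(F,\Gamma)\le u_w(F,\Gamma)+c_{\mathrm{alg}}(\Gamma)$ from scratch and then obtaining the two sharper bounds by refining the proof of Corollary \ref{thm1-10}. For the first inequality I would fix a weak simplified form realising $u_w(F,\Gamma)$: adding $u_w(F,\Gamma)$ 1-handles of type $h(\sigma_i,e)$ or $h(e,e)$ to $(F,\Gamma)$ produces something equivalent to
\[
(F,\Gamma_0)+\sum_k h(\sigma_{i_k},e)+\sum_l h(\sigma_{i_l},\sigma_{j_l}^{\epsilon_l})+\sum h(e,e),\qquad |i_l-j_l|>1.
\]
The whole problem is then to clear the bad summand $\sum_l h(\sigma_{i_l},\sigma_{j_l}^{\epsilon_l})$ by adjoining at most $c_{\mathrm{alg}}(\Gamma)$ further 1-handles, which may now be of any admissible type since they are counted by $u(F,\Gamma)$.

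The first lemma I would prove is a local cancellation statement: two 1-handles $h(\sigma_i,\sigma_j^{+1})$ and $h(\sigma_i,\sigma_j^{-1})$ with the same unordered pair $\{i,j\}$ ($|i-j|>1$) and ends on a common 2-disk can be replaced, using only handle slides and C-moves and hence \emph{without changing the number of 1-handles}, by two 1-handles of admissible type; concretely, a handle slide makes the concatenated core loop carry $\sigma_j^{+1}\sigma_j^{-1}=e$, the commutation $\sigma_i\sigma_j=\sigma_j\sigma_i$ being used to untangle the cocores. Applying this inside the displayed weak simplified form, after first sliding bad ends adjacent to one another, reduces the bad multiset so that for each pair $\{i,j\}$ only $n_{ij}:=|\,\#\{l:\epsilon_l=+1\}-\#\{l:\epsilon_l=-1\}\,|$ bad handles of a single sign remain, the total 1-handle count still being $u_w(F,\Gamma)$. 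For each surviving $h(\sigma_i,\sigma_j^{\epsilon})$ I then adjoin one new handle $h(\sigma_i,\sigma_j^{-\epsilon})$ and cancel the resulting pair by the lemma, obtaining a simplified form with $u_w(F,\Gamma)+\sum_{\{i,j\}}n_{ij}$ 1-handles. Hence $u(F,\Gamma)\le u_w(F,\Gamma)+\sum_{\{i,j\}}n_{ij}$.

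It remains to identify $\sum_{\{i,j\}}n_{ij}$ with $c_{\mathrm{alg}}(\Gamma)$, and this is the step I expect to be the main obstacle. Reading $c_{\mathrm{alg}}$ off the chart \emph{together with} the attached 1-handle charts, a 1-handle $h(\sigma_i,\sigma_j^{\epsilon})$ contributes $\pm1$ to $\#c_{i,j}-\#c_{j,i}$ with sign $\epsilon$, while $h(\sigma_i,e)$, $h(e,e)$ and free edges contribute nothing, so $c_{\mathrm{alg}}$ of the weak simplified form is exactly $\sum_{\{i,j\}}n_{ij}$. Thus it suffices to prove that, for each fixed $i<j$, the algebraic crossing number $\#c_{i,j}-\#c_{j,i}$ of the chart-plus-handles is invariant under C-moves and under handle slides and is unchanged by additions of $h(\sigma_i,e)$ and of $h(e,e)$; then evaluation at $(F,\Gamma)$ gives $c_{\mathrm{alg}}(\Gamma)$ and evaluation at the weak form gives $\sum n_{ij}$. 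Invariance under the C-moves should follow because the moves that create or delete crossings do so by a $c_{i,j}$-crossing together with a $c_{j,i}$-crossing, and the other moves, including those across white vertices, leave the algebraic count unchanged. The delicate point — and the heart of the argument — is that a handle slide, which multiplies one core loop by another, does not disturb the algebraic count; I would attack this either by finding a homological or monodromy-theoretic interpretation of $\#c_{i,j}-\#c_{j,i}$ under which handle slides act trivially, or by a direct case analysis of the handle-slide move on the chart near the sliding end. The explicit chart manipulation behind the cancellation lemma is a secondary, but still nontrivial, technical point.

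For the sharper bounds $u(F,\Gamma)\le\max\{u_w(F,\Gamma),N-1\}+1$ and $u(F,\Gamma)\le\max\{u_w(F,\Gamma),N-1\}$ I would revisit the proof of Corollary \ref{thm1-10}. That proof reaches a simplified form by first producing a weak simplified form, then appending the block $\sum_{i=1}^{N-1}h(\sigma_i,e)$ and bad handles of the single restricted type $h(\sigma_1,\sigma_3^{\epsilon})$ as in Theorem \ref{thm1-8}, and finally clearing those as in Theorem \ref{thm1-9} (one extra handle when $c(\Gamma)$ is odd). The slack is that the $N-1$ handles $\sum_{i=1}^{N-1}h(\sigma_i,e)$ are effectively paid for twice: a weak simplified form produced via the ``in particular'' part of Theorem \ref{thm1-2} already contains such a block, so when $u_w(F,\Gamma)\ge N-1$ it should be reused rather than re-adjoined, which replaces the summand $N-1$ by $1$ and gives the first sharper bound; absorbing the residual $h(\sigma_1,\sigma_3^{\epsilon})$ into the reused block, along the lines of Theorem \ref{thm1-9}, would remove the final $+1$. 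The obstacle here is that a \emph{minimal} weak form need not contain a full $\sum_{i=1}^{N-1}h(\sigma_i,e)$, so one must show that enlarging it to one that does can be arranged without pushing the total 1-handle count above $\max\{u_w(F,\Gamma),N-1\}$.
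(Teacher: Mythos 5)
This statement is a \emph{conjecture} in the paper, not a theorem: the paper offers no proof of any of the three inequalities, and it explains at length why not. The first inequality, $u(F,\Gamma)\le u_w(F,\Gamma)+c_{\mathrm{alg}}(\Gamma)$, was announced as \cite[Proposition 1.10]{N5} with essentially the argument you outline, and the present paper retracts it to conjecture status precisely because that argument has a gap --- and the gap is exactly the step you yourself flag as ``the main obstacle.'' Your plan requires the per-pair algebraic crossing count $\#c_{i,j}-\#c_{j,i}$, read off the chart together with the 1-handle charts, to be invariant under all the C-moves and handle manipulations used to reach a weak simplified form, so that its value there ($\sum_{\{i,j\}}n_{ij}$) equals its value on $\Gamma$ ($c_{\mathrm{alg}}(\Gamma)$). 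The paper states explicitly, after the proof of Proposition \ref{prop1-9} and again in Remark \ref{rem-4}, that $s(\Gamma)$ and $c_{\mathrm{alg}}(\Gamma)$ are \emph{not} invariant under CI-M4 moves, and such moves are genuinely used in the simplification (see the white-vertex elimination of Fig.~\ref{fig12} and the proof of Lemma \ref{lemma1-9}, which begins with a CI-M4 move). Lemma \ref{lemma1-9} (\ref{eq4-7a}) makes the failure concrete: $h(\sigma_1,\sigma_3^{n})$ can be traded for $h(\sigma_1,\sigma_3^{n-2})$ at no cost in additional 1-handles, so the residual algebraic count of the weak simplified form is not determined by $\Gamma$ and need not equal $c_{\mathrm{alg}}(\Gamma)$. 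The paper notes that the proof in \cite{N5} covers only the special case in which the crossings of the weak simplified form inject, with their types, into those of $\Gamma$. Your preliminary cancellation lemma is fine (it is essentially Lemma \ref{lem3-8} combined with the first step of the proof of Theorem \ref{thm1-8}), but the identification step cannot be completed as proposed, and the paper's own computations show it is false as stated.

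The two sharper inequalities are likewise left unproven in the paper; the phrase ``by the proof of Corollary \ref{thm1-10}, we conjecture'' signals motivation, not proof. Corollary \ref{thm1-10} itself only gives $u(F,\Gamma)\le\max\{1,u_w(F,\Gamma)\}+N-1$, and the improvement to $\max\{u_w(F,\Gamma),N-1\}+1$ (let alone dropping the $+1$) hinges on the very point you concede is open: a minimal weak simplified form realizing $u_w(F,\Gamma)$ need not contain the block $\sum_{i=1}^{N-1}h(\sigma_i,e)$, and there is no argument supplied --- here or in the paper --- for producing such a block without exceeding $\max\{u_w(F,\Gamma),N-1\}$ handles in total. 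So none of the three inequalities is established by your proposal; these questions are deferred to \cite{N7} and \cite{N8}.
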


Here, we give a simple example satisfying $u_w(F, \Gamma) < u(F, \Gamma)$. 
\begin{example}
Let $N>1$ be an integer. 
Let $(F, \Gamma)$ be a branched covering surface-knot $(S^2, \emptyset)+\sum_{i=1}^N h(\sigma_i, \sigma_{N+i})$ of degree $2N+1$. 
Then, $u_w(F, \Gamma)=0$ and $u(F, \Gamma)=N$. 
\end{example}

Concerning Conjectures \ref{conj}, \ref{conj2} and the simplifying number and the weak simplifying number, we discuss in \cite{N7}. We also investigate concerning Conjecture \ref{conj2} and the simplifying number and the weak simplifying number for branched covering surface-knots with black vertices in \cite{N8}. 
 
The paper is organized as follows. In Section \ref{sec2}, we review branched covering surface-knots and their chart presentations. 
In Section \ref{sec3}, we show Theorem \ref{thm1-7}. In Section \ref{sec4}, we show Theorem \ref{thm1-8}. In Section \ref{sec5}, we show Theorem \ref{thm1-9} and Corollaries \ref{thm1-11}, \ref{thm1-10}. 

\section{Terminology change: branched covering surface-knots (formerly 2-dimensional braids) and their chart presentations}\label{sec2}
In this section, we review a branched covering surface-knot, formerly called a 2-dimensional braid over a surface-knot \cite{N4}, which is an extended notion of 2-dimensional braids or surface braids over a 2-disk \cite{Kamada92, Kamada02, Rudolph}. A branched covering surface-knot over a surface-knot $F$ is presented by a finite graph called a chart on a surface diagram of $F$ \cite{N4} (see also \cite{Kamada92, Kamada02}). For two branched covering surface-knots of the same degree, they are equivalent if their surface diagrams with charts are related by a finite sequence of ambient isotopies of $\mathbb{R}^3$, and local moves called C-moves \cite{Kamada92, Kamada96, Kamada02} and Roseman moves \cite{N4} (see also \cite{Roseman}). 

\begin{remark}
We formerly called a branched covering surface-knot a 2-dimensional braid over a surface-knot. The term \lq\lq 2-dimensional braid" is usually used for a surface properly embedded in a bidisk in the braid form over a 2-disk \cite{Kamada02}, and it has a boundary, but a branched covering surface-knot is embedded in $\mathbb{R}^4$ and has no boundary. Hence, in order to avoid confusion, we change the notation. 
\end{remark}

We work in the smooth category. A {\it surface-knot} is the image of a smooth embedding of a closed surface into $\mathbb{R}^4$. Two surface-knots are {\it equivalent} if one is carried to the other by an ambient isotopy of $\mathbb{R}^4$. 
 
\subsection{Branched covering surface-knots over a surface-knot}
 
Let $B^2$ be a 2-disk, and let $N$ be a positive integer. 
For a surface-knot $F$, let $N(F)=B^2 \times F$ be a tubular neighborhood of $F$ in $\mathbb{R}^4$. 

\begin{definition}
A closed surface $S$ embedded in $N(F)$ is called a {\it branched covering surface-knot over $F$ of degree $N$} if it satisfies the followings. 

\begin{enumerate}[(1)]
\item
The restriction $p|_{S} \,:\, S \rightarrow F$ is a branched covering map of degree $N$, where $p\,:\, N(F) \to F$ is the natural projection. 

\item The number of points consisting $S \cap p^{-1}(x)$ is $N$ or $N-1$ for any point $x \in F$.
\end{enumerate}
Take a base point $x_0$ of $F$. 
Two branched covering surface-knots over $F$ of degree $N$ are {\it equivalent} if there is an ambient isotopy of $\mathbb{R}^4$ whose restriction to $N(F)=B^2 \times F$ is a fiber-preserving ambient isotopy rel $p^{-1}(x_0)$ taking one to the other. 

\end{definition}
\subsection{Chart presentation}\label{sec2-2}

Let $S$ be a branched covering surface-knot over a surface-knot $F$. A {\it surface diagram} of a surface-knot is the image of $F$ in $\mathbb{R}^3$ by a generic projection, equipped with the over/under information on sheets along each double point curve. In this paper, $F$ is an unknotted surface-knot in the standard form. We consider a surface diagram of $F$ as the image of $F$ by the projection $\mathbb{R}^4 \to \mathbb{R}^3$, $(x,y,z,t) \mapsto (x,y,z)$, which has no singularities. We identify the surface diagram of $F$ with $F$ itself, and we identify $N(F)$ with $I \times I \times F$, where the second $I$ is an interval in the fourth axis of $\mathbb{R}^4$. 

We explain a chart on a surface diagram $F$. 
Consider the singular set $\mathrm{Sing}(p_1(S))$ of the image of $S$ by the projection $p_1: I \times I \times F\to I \times F$, $(s,t,x) \mapsto (s,x)$. Perturbing $S$ if necessary, we assume that $\mathrm{Sing}(p_1(S))$ consists of double point curves, triple points, and branch points. Moreover, we assume that the singularity set of the image of $\mathrm{Sing}(p_1(S))$ by the projection to $F$ consists of a finite number of double points such that the preimages belong to double point curves of $\mathrm{Sing}(p_1(S))$. Thus, the image of $\mathrm{Sing}(p_1(S))$ by the projection to $F$ forms a finite graph $\Gamma$ on $F$ such that the degree of a vertex of $\Gamma$ is either $1$, $4$ or $6$. An edge of $\Gamma$ corresponds 
to a double point curve, and a vertex of degree $1$ (resp. $6$) 
corresponds to a branch point (resp. a triple point). 

For such a graph $\Gamma$ obtained from a branched covering surface-knot $S$, we assign orientations and labels to edges of $\Gamma$ as follows. We consider an oriented path $\rho$ in $F$ such that $\rho \cap \Gamma$ is a point $x$ of an edge $e$ of $\Gamma$. Then, $F \cap p^{-1} (\rho)$ is a classical $N$-braid with one crossing in $p^{-1}(\rho)$ such that $x$ corresponds to the crossing of the $N$-braid, where $N$ is the degree of $S$. Let $\sigma_{i}^{\epsilon}$ ($i \in \{1,2,\ldots, N-1\}$, 
$\epsilon \in \{+1, -1\}$) be the presentation of $S \cap p^{-1}(\rho)$. Then, assign $e$ the label $i$, and the orientation such that 
the normal vector of $\rho$ is coherent with (resp. opposite to) the orientation of $e$ if $\epsilon=+1$ (resp. $-1$), where the normal vector of $\rho$ is the vector $\vec{n}$ such that $(\vec{v}(\rho), \vec{n})$ is the orientation of $F$ for a tangent vector $\vec{v}(\rho)$ of $\rho$ at $x$. 
In this situation, we say that an oriented path $\rho$ {\it reads a braid} $\sigma_i^\epsilon$. 
This is the {\it chart of $S$}. 
\\
 
 For an unknotted surface-knot $F$ in the standard form, 
 we define a chart on a surface diagram of $F$ as follows, see \cite{Kamada02}. See \cite{N4} for a chart on a surface diagram which admits singularities. 
 
\begin{definition} 
Let $N$ be a positive integer. 
A finite graph $\Gamma$ on a surface diagram $F$ without singularities is called a {\it chart} of degree $N$ if 
it satisfies the following conditions. 

\begin{enumerate}[(1)]

 \item Every vertex has degree $1$, $4$, or $6$.
 
 \item Every edge of $\Gamma$ is oriented and labeled by an element of $\{1,2, \ldots, N-1\}$ such that the adjacent edges around each  of degree $1$, $4$, or $6$  
are oriented and labeled as shown in 
Fig. \ref{fig2}, 
where we depict a vertex of degree 1 by a black vertex, and a vertex of degree 
6 by a white vertex, and we call a vertex of degree $4$ a crossing.
 
 \end{enumerate}
\end{definition}

\begin{figure}[ht] 
\includegraphics*{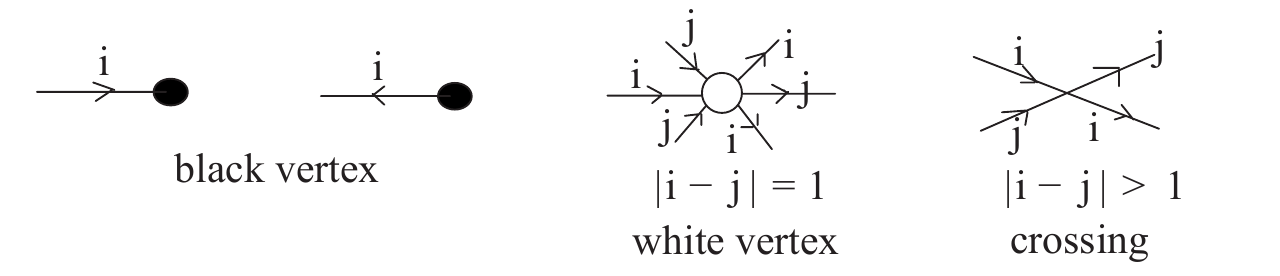}
\caption{Vertices in a chart, where $i \in \{1,\ldots,N-1\}$. There are three types. }
\label{fig2}
\end{figure}

A black vertex (resp. a white vertex) of a chart corresponds to a branch point (resp. a triple point) of the branched covering surface-knot presented by the chart.  We call an edge of a chart a {\it chart edge} or simply an {\it edge}. 
We regard diagonal edges connected with crossings as one edge 
with crossings, and we regard that a crossing is formed by a transverse intersection of two edges. 
A chart edge connected with no vertices or a chart edge with crossings 
is called a {\it chart loop} or simply a {\it loop}. A chart edge connected with two black vertices at endpoints is called a {\it free edge}. A chart is said to be {\it empty} if it is an empty graph. 

Among six edges connected with a white vertex, we call an edge which is the middle of three adjacent edges with the coherent orientation a {\it middle edge}, and we call an edge which is not a middle edge a {\it non-middle edge}; see Fig. \ref{fig4}. 
For edges connected with a white vertex, we call a pair of edges separated by two edges at each side {\it diagonal edges}. 

In this paper, for a white vertex $w$ such that the three adjacent edges oriented toward $w$ consist of two edges with the label $i$ and one edge with the label $j$, we call $w$ a white vertex of {\it type} $w_{i,j}$. For a crossing $c$ consisting of edges with the labels $i$ and $j$ such that the orientations of the edges form the right-handed orientation, we say $c$ a crossing of {\it type} $c_{i,j}$. See Fig. \ref{fig4}.  
 
\begin{figure}[ht] 
\includegraphics*[height=3cm]{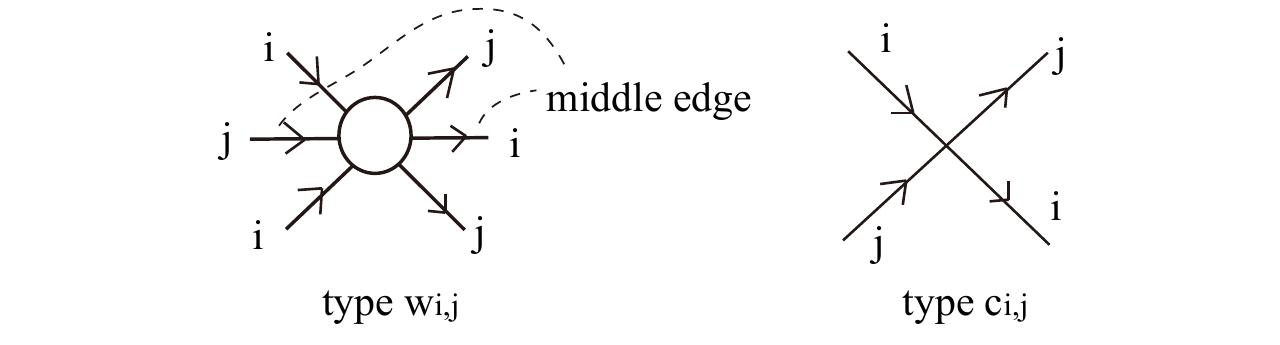}
\caption{A white vertex of type $w_{i,j}$ and a crossing of type $c_{i,j}$.}
\label{fig4}
\end{figure}

A branched covering surface-knot over a surface-knot $F$ is presented by a chart $\Gamma$ on a surface diagram of $F$ \cite{N4}. We present such a branched covering surface-knot by $(F, \Gamma)$. 
 
\subsection{C-moves}
  
{\it C-moves} are local moves of a chart, consisting of three types: CI-moves, CII-moves, and CIII-moves. 
Let $\Gamma$ and $\Gamma^{\prime}$ be two charts of the same degree on a surface diagram $F$. We say $\Gamma$ and $\Gamma'$ are related by a {\it CI-move}, {\it CII-move} or {\it CIII-move} if there exists a 2-disk $E$ in $F$ such that the loop $\partial E$ is in general position with respect to $\Gamma$ and $\Gamma^{\prime}$ and $\Gamma \cap (F-E)=\Gamma^{\prime} \cap (F-E)$, and the following conditions hold true.  
 
(CI) There are no black vertices in $\Gamma \cap E$ nor $\Gamma^{\prime} \cap E$. There are 7 types called CI-M1--CI-M4 moves, and CI-R1--CI-R3 moves as in Fig. \ref{fig5}. 

(CII) $\Gamma \cap E$ and $\Gamma' \cap E$ are as in Fig. \ref{fig5}, where $|i-j|>1$.

(CIII) $\Gamma \cap E$ and $\Gamma' \cap E$ are as in Fig. \ref{fig5}, where $|i-j|=1$, and the black vertex is connected with a non-middle edge of a white vertex. 
\\

\begin{figure}[ht]
\includegraphics*[width=13cm]{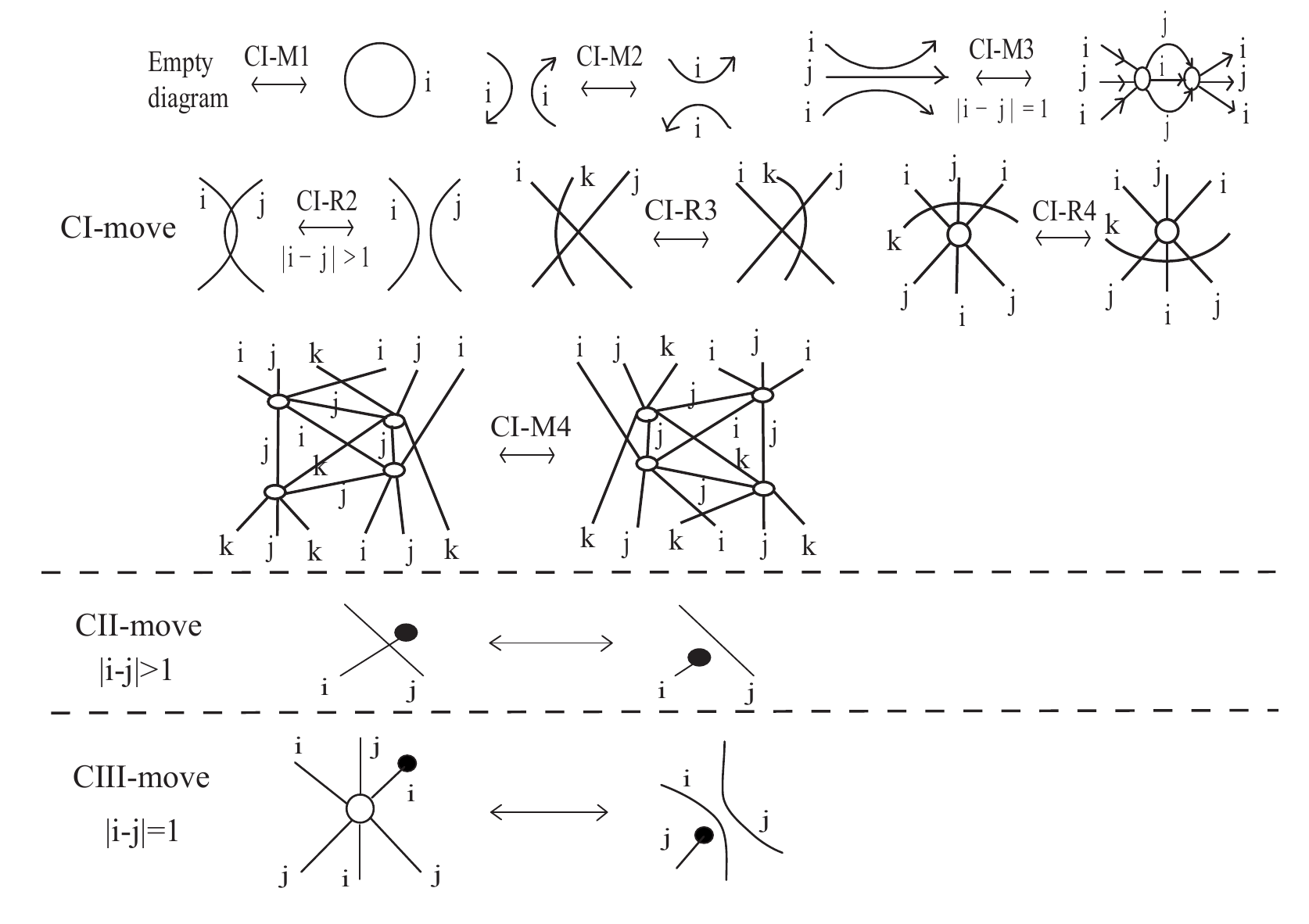}
\caption{C-moves. For simplicity, we omit orientations of some edges.} 
\label{fig5}
\end{figure}

Let $\Gamma$ and $\Gamma'$ be charts of the same degree on a surface diagram $F$. 
We say $\Gamma$ and $\Gamma'$ are {\it C-move equivalent} if they are related by a finite sequence of C-moves and ambient isotopies of $F$ rel $x_0$, where $x_0$ is a base point of $F$. 
For charts $\Gamma$ and $\Gamma'$,  
their presenting branched covering surface-knots are equivalent if the charts are C-move equivalent \cite{Kamada92, Kamada96, Kamada02}.

\subsection{Roseman moves}
 
{\it Roseman moves for surface diagrams with charts of the same degree} are defined by the original Roseman moves (see \cite{Roseman}) and moves for local surface diagrams with non-empty charts (see \cite{N4}), where we regard the diagrams for the original Roseman moves as equipped with empty charts. 
 
 For two surface diagrams with charts of the same degree, their presenting branched covering surface-knots are equivalent if they are related by a finite sequence of ambient isotopies of $\mathbb{R}^3$ and Roseman moves for surface diagrams with charts \cite{N4}.  
 
\section{Simplifying branched covering surface-knots}\label{sec3}

We denote by $h(a,b)$ for braids $a, b$ a 1-handle equipped with a chart without black vertices such that the cocore (resp. the orientation-reversed core loop) reads the presentation of $a$ (resp. $b$), and we assume that $h(a,b)$ is well-defined, that is, $a$ and $b$ commute. Unless otherwise said, we assume that 1-handles are attached to a fixed 2-disk $E$ in $F$ such that in $E$ there are no chart edges nor vertices except those of 1-handles. 
For 1-handles with charts $h(a_1, b_1)$, $\ldots$, $h(a_g, b_g)$, 
we denote by $\sum_{i=1}^g h(a_i,b_i)$ the result of the 1-handle addition. 
We say 1-handles with charts are {\it equivalent} if their presenting branched covering surface-knots are equivalent relative to a tubular neighborhood of $F-E$, and we use the notation \lq\lq $\sim$'' to denote equivalence relation. 
Let $N$ be the degree of branched covering surface-knots. 

\begin{lemma}\label{lem3-1}
For $N$-braids $a$ and $b$, we have 
\begin{equation}\label{eq3-1}
h(a,b) \sim h(a^{-1}, b^{-1}).
\end{equation}
In particular, 
\begin{equation}\label{eq3-2}
h(\sigma_i, e) \sim h(\sigma_i^{-1}, e), 
\end{equation}
where $i \in \{1, \ldots, N-1\}$. 
\end{lemma}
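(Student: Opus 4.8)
The plan is to realize the equivalence $h(a,b)\sim h(a^{-1},b^{-1})$ by a geometric symmetry of the 1-handle together with its attaching region, rather than by a chart computation. First I would recall what the data $(a,b)$ encode: on $h=B^2\times I$, the cocore $\partial B^2\times\{0\}$ with its given orientation reads $a$, and the core loop with the reversed orientation reads $b$. The key observation is that reversing the orientation of a path that reads a braid word $\sigma_{i_1}^{\epsilon_1}\cdots\sigma_{i_k}^{\epsilon_k}$ causes it to read the \emph{reverse word} $\sigma_{i_k}^{\epsilon_k}\cdots\sigma_{i_1}^{\epsilon_1}$, while reversing the normal vector (equivalently, flipping the local orientation of $F$ along the path) changes each $\epsilon$ to $-\epsilon$. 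So the two natural orientation-reversing operations available to us each have a controlled effect on the braids being read.

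Concretely, I would apply the ambient isotopy of $N(F)$ that reverses the $I$-coordinate of the 1-handle $h=B^2\times I$, i.e. the map induced by $(x,t)\mapsto(x,1-t)$ on $B^2\times I$, extended by a fiber-preserving isotopy over a neighborhood of the core loop and cocore in $E$, and by the identity outside. This is an isotopy of the kind allowed in the definition of equivalence of 1-handles with charts (it is fiber-preserving over $F$ near $E$ and the identity near $F-E$). Under this move: the cocore $\partial B^2\times\{0\}$ is carried to $\partial B^2\times\{1\}$; tracking the induced orientations and normal directions, one checks that the braid read by the (re-oriented) cocore becomes $a^{-1}$ and the braid read by the (orientation-reversed) core loop becomes $b^{-1}$. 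In terms of the chart on $E+h$, this move is simply the composite of a $\pi$-rotation of the band with a normal-direction flip, which takes the chart of $h(a,b)$ to the chart of $h(a^{-1},b^{-1})$ up to C-moves (the chart of a 1-handle being well-defined only up to C-move equivalence, by the discussion preceding Lemma~\ref{lem3-1}). This yields \eqref{eq3-1}.

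For the in-particular statement \eqref{eq3-2}, I would simply specialize: take $a=\sigma_i$ and $b=e$. Then $a^{-1}=\sigma_i^{-1}$ and $b^{-1}=e$, and \eqref{eq3-1} gives $h(\sigma_i,e)\sim h(\sigma_i^{-1},e)$ directly. (Alternatively, for a single-crossing cocore one can see \eqref{eq3-2} even more cheaply: the chart of $h(\sigma_i,e)$ is a small loop of label $i$ on the band with no vertices, and reversing its orientation — a C-move, since it has no black or white vertices — changes the sign of the crossing read by the cocore, turning $\sigma_i$ into $\sigma_i^{-1}$ while leaving the core loop reading $e$ unchanged.)

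The main obstacle I anticipate is bookkeeping the interaction of the two orientation conventions — the orientation of the path versus the normal/framing direction — so that the band-reversal move produces exactly $(a^{-1},b^{-1})$ and not, say, $(\tilde a,\tilde b)$ with $\tilde a$ the reverse word of $a$ but without the sign change. The point to get right is that the $t\mapsto 1-t$ reversal simultaneously reverses the longitudinal orientation (giving the reverse word) \emph{and} reverses the co-orientation used to read signs (changing each $\sigma^{\pm}$ to $\sigma^{\mp}$), and that the reverse word of $a$ with all signs flipped is C-move equivalent, as a chart on the band, to $a^{-1}$; since we do not distinguish framings (by the standing convention recorded in the Remark after Assumption~\ref{assump1}), the framing ambiguity introduced by the flip costs us nothing. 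Once these conventions are pinned down the argument is short.
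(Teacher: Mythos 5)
Your main argument is essentially the paper's own proof: the paper disposes of the lemma in one sentence by ``rotating the 1-handle around and taking the orientation reversal of the original core loop as the new core loop,'' which is exactly your $(x,t)\mapsto(x,1-t)$ band flip, and your orientation bookkeeping just fills in the details the paper leaves implicit. The only shaky point is your parenthetical ``alternative'' for \eqref{eq3-2} --- reversing the orientation of the essential chart loop on the band is not a C-move, since that loop does not bound a disk in $E+h$ --- but this costs nothing because \eqref{eq3-2} already follows by specializing \eqref{eq3-1}.
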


\begin{proof}
By rotating the 1-handle around and taking the orientation reversal of the original core loop as the new core loop, we have the result (see Fig. \ref{fig6}). See \cite[Lemma 4.1]{N5}. 
\end{proof}

\begin{figure}[ht]
\centering
\includegraphics*[height=2.5cm]{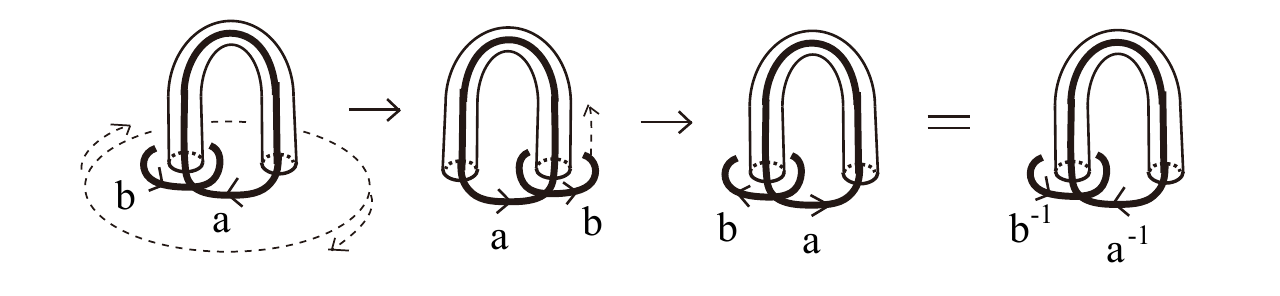}
\caption{The image of $h(a,b) \sim h(a^{-1},b^{-1})$, where actually the chart consists of chart edges, white vertices and crossings. }
\label{fig6}
\end{figure}

\begin{lemma}\label{lem3-4}
Together with $\sum_{i=1}^{N-1} h(\sigma_i, e)$, a 1-handle $h(e,e)$  transforms to $h(e, b) $ for any braid $b$:

\begin{equation}\label{eq3-7}
\sum_{i=1}^{N-1} h(\sigma_i,e) +h(e,e) \sim  \sum_{i=1}^{N-1} h(\sigma_i,e) +h(e,b).
\end{equation}
\end{lemma}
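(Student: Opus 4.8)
The plan is to show that the presence of the full collection $\sum_{i=1}^{N-1} h(\sigma_i, e)$ allows us to modify the core-loop data of an additional trivial handle $h(e,e)$ at will, so that its core loop reads any prescribed braid $b$ while its cocore still reads the trivial braid $e$. The key observation is that the core loop of $h(e,e)$ is an embedded loop in $F+h$ that can be slid, through the 2-disk $E$, across the cocores of the handles $h(\sigma_i,e)$; each time the core loop passes over the cocore of $h(\sigma_i,e)$, a chart edge labeled $i$ is produced on the part of the surface diagram swept out, which by a CI-move (a finger move across a double point curve, cf. Fig.~\ref{fig5}) can be absorbed into the chart carried by the core loop. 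Thus one can manufacture, on the core loop of the handle, any word in the generators $\sigma_1,\ldots,\sigma_{N-1}$ and their inverses, i.e. any braid $b$.

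Concretely, I would proceed as follows. First, recall from Section~\ref{sec2-2} that the braid read by a closed path is governed by its intersections with the chart $\Gamma$; since we are working relative to a tubular neighborhood of $F-E$ and the handles $h(\sigma_i,e)$ contribute chart edges near their cocores, pushing the core loop of $h(e,e)$ through a neighborhood of the cocore of $h(\sigma_i,e)$ changes the braid it reads by $\sigma_i^{\pm 1}$, the sign depending on the direction of crossing; by Lemma~\ref{lem3-1} (equation~(\ref{eq3-2})) the sign is immaterial, since $h(\sigma_i,e)\sim h(\sigma_i^{-1},e)$, so both $\sigma_i$ and $\sigma_i^{-1}$ are available as elementary modifications. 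Second, I would check that this operation does not alter the cocore of $h(e,e)$: the cocore is a meridian loop $\partial B^2\times\{0\}$ of the handle, disjoint from $E$ away from the attaching disk, and the isotopy moving the core loop is supported away from the cocore, so the cocore still reads $e$. Third, I would check that the other handles $h(\sigma_i,e)$ are returned to their original form: after the core loop of $h(e,e)$ has passed over the cocore of $h(\sigma_j,e)$ and come back, the net effect on $h(\sigma_j,e)$ is trivial up to C-moves, because the two crossings created on its cocore-neighborhood cancel (a CI-M2 type cancellation). Writing $b$ as a word $\sigma_{i_1}^{\epsilon_1}\cdots\sigma_{i_m}^{\epsilon_m}$ and performing $m$ such elementary modifications in order yields $h(e,b)$, giving the equivalence (\ref{eq3-7}).

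The main obstacle I anticipate is the bookkeeping in the third step: verifying that dragging the core loop of $h(e,e)$ around the necessary circuit in $E$ genuinely restores each $h(\sigma_i,e)$ up to C-move equivalence, rather than leaving behind spurious chart edges or altering the framing data. This requires a careful local picture (in the spirit of Fig.~\ref{fig2a}) of how a chart edge of label $i$ interacts with the cocore and core loop of $h(\sigma_i,e)$ as the $h(e,e)$-loop is pushed across, and an appeal to the fact — used repeatedly in \cite{N5} — that a pair of oppositely oriented parallel chart edges of the same label can be removed by CI-moves. One also needs the commutativity condition implicit in the notation $h(e,b)$, namely that $e$ and $b$ commute, which is automatic. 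Once the local model is set up, the argument is a finite iteration, one step per letter of $b$, and the equivalence is relative to a neighborhood of $F-E$ as required.
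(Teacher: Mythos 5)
Your proposal is correct and follows essentially the same route as the paper: one drags an end of the handle $h(e,c)$ across the chart edge of $h(\sigma_i,e)$ lying in $E$ (the paper phrases this as moving the end ``between the ends of $h(\sigma_i,e)$''), so that a CI-M2--type move leaves a loop of label $i$ around its cocore, appending $\sigma_i^{\epsilon}$ to the braid read by the core loop while leaving $h(\sigma_i,e)$ unchanged; the sign is controlled exactly as you say via Lemma~\ref{lem3-1}, and iterating over the letters of $b$ finishes the argument. Your only slip is terminological --- the moving end crosses the chart edge of $h(\sigma_i,e)$ in $E$, not literally its cocore --- and the bookkeeping you flag in your third step is precisely what Fig.~\ref{fig8} resolves.
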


\begin{proof}
Let $c$ be a braid. 
By moving an end of $h(e,c)$ between the ends of $h(\sigma_i, e)$ and applying a CI-move as indicated in Fig. \ref{fig8}, $h(e,c)$ changes to $h(e, c\sigma_i^{\epsilon})$ ($\epsilon=-1$), and $h(\sigma_i, e)$ is unchanged. See also \cite[Lemma 4.6]{N5}. The sign $\epsilon$ changes to $+1$ by changing the orientation of the chart loop of $h(\sigma_i,e)$ by rotating it around to be $h(\sigma_i^{-1}, e)$. Hence, $h(e,c) +h(\sigma_i, e) \sim h(e, c \sigma_i^\epsilon)+h(\sigma_i,e)$, where $\epsilon \in \{+1, -1\}$. Since $\sum_{i=1}^{N-1} h(\sigma_i,e)$ covers all labels $1, \ldots, N-1$, by repeating this process several times to make chart loops surrounding the cocore of the first 1-handle such that the loops present the given braid $b$, we have the required result. 
\end{proof}

\begin{figure}[ht]
\centering
\includegraphics*[height=4.5 cm]{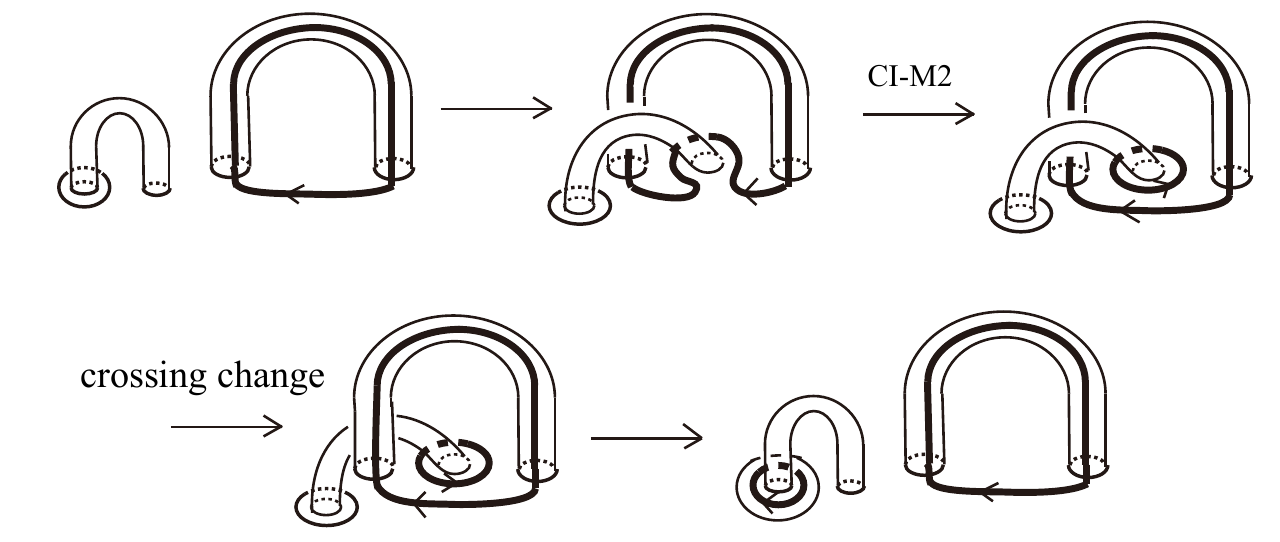}
\caption{$h(e,c) +h(\sigma_i, e) \sim h(e, c \sigma_i^{\epsilon})+h(\sigma_i,e)$. The orientations of chart loops are for the case $\epsilon=-1$. For simplicity, we omit the label of chart loops, and the part presenting $c$ is an image.}
\label{fig8}
\end{figure}

\begin{lemma}\label{lem3-5}
Together with $\sum_{i=1}^{N-1} h(\sigma_i, e)$, 
a 1-handle $h(e,e)$ transforms to a 1-handle with an empty chart such that each end is attached to anywhere. 
\end{lemma}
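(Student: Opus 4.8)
The plan is to realize the transformation in two stages. First I would move one end of the 1-handle $h(e,e)$ freely through the 2-disk $E$ (where the chart is empty by assumption), so that both ends of $h(e,e)$ lie very close to the 1-handles $\sum_{i=1}^{N-1}h(\sigma_i,e)$; this uses only ambient isotopy of the surface diagram and costs nothing. Then, applying Lemma \ref{lem3-4}, I can transform $h(e,e)$ into $h(e,b)$ for any braid $b$ of our choosing, at the cost of leaving $\sum_{i=1}^{N-1}h(\sigma_i,e)$ unchanged.

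The core idea is then to choose $b$ so that the chart loop of $h(e,b)$ can be \emph{absorbed} into the system $\sum_{i=1}^{N-1}h(\sigma_i,e)$ by running the argument of Lemma \ref{lem3-4} in reverse, or rather by observing that the chart loops produced there were built by sliding one end of $h(e,b)$ between the ends of the $h(\sigma_i,e)$ and applying CI-moves. Concretely, after the first stage we have a 1-handle whose core loop reads some braid $b$; I would push that chart loop back across the ends of the $h(\sigma_i,e)$, one crossing at a time, using the same CI-move of Fig. \ref{fig8} in the opposite direction, each time reducing the word length of $b$ by one generator (after possibly reversing the orientation of a chart loop of some $h(\sigma_i^{\pm 1},e)$ via Lemma \ref{lem3-1}). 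Since $\sum_{i=1}^{N-1}h(\sigma_i,e)$ covers all labels $1,\dots,N-1$, any generator $\sigma_i^{\pm 1}$ appearing in $b$ can be cancelled this way, so after finitely many steps the chart of $h(e,b)$ becomes empty, giving $h(e,e)\sim h(\emptyset,\emptyset)$ \emph{relative} to the fixed system of $h(\sigma_i,e)$'s. Finally, once the chart on the 1-handle is empty, the 1-handle carries no chart edges or vertices, so its ends may be slid anywhere in $F$ (the chart being empty everywhere along the slide), which is the claimed conclusion.

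The main obstacle I anticipate is bookkeeping the orientations of the chart loops and the signs $\epsilon$ throughout: Lemma \ref{lem3-4} produces a loop reading $c\sigma_i^{\epsilon}$ with $\epsilon$ depending on how the end is slid, and reversing the process requires matching that sign correctly, which in turn may force repeated use of Lemma \ref{lem3-1} to flip $h(\sigma_i,e)$ to $h(\sigma_i^{-1},e)$ and back. One must check that these flips do not accumulate and leave the system $\sum_{i=1}^{N-1}h(\sigma_i,e)$ in a modified state; since each flip is an equivalence and can be undone, this is a finiteness argument rather than a genuine difficulty, but it is the place where care is needed. A secondary point to verify is that the initial free slide of one end of $h(e,e)$ to the vicinity of the $h(\sigma_i,e)$'s is legitimate, which follows from our standing assumption that $E$ contains no chart edges or vertices other than those of the attached 1-handles.
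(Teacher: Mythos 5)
Your argument has a genuine gap at its final step. You assert that once the chart on the 1-handle is empty, ``its ends may be slid anywhere in $F$ (the chart being empty everywhere along the slide).'' But only the fixed 2-disk $E$ where the 1-handles are attached is assumed to be free of chart edges; the surface $F$ carries the chart $\Gamma$ of the branched covering surface-knot, and the target 2-disks $E_1, E_2$ (for instance, neighborhoods of a pair of white vertices, which is exactly how this lemma is used in the proof of Theorem \ref{thm1-7}) are in general reachable only by paths that cross edges of $\Gamma$. Each such crossing forces a CI-M2 move that deposits a chart loop around the cocore of the 1-handle, so after the slide the 1-handle is not empty but reads $b_1 b_2^{-1}$, where $b_i$ is the braid presented by the path to $E_i$. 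This is precisely the difficulty the lemma exists to overcome, and your proposal never confronts it. In addition, your intermediate stages accomplish nothing: you use Lemma \ref{lem3-4} to turn $h(e,e)$ into $h(e,b)$ and then cancel $b$ generator by generator to return to $h(e,e)$, ending where you began.

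The paper's proof uses the freedom in Lemma \ref{lem3-4} in an essential, non-circular way: choose the paths from $E$ to $E_1$ and to $E_2$ first, read off the braids $b_1$ and $b_2$ they present, pre-load the 1-handle as $h(e, b_1^{-1}b_2)$, and only then slide the ends along the paths. The loops accumulated during the slide combine with the pre-loaded ones to present $b_1(b_1^{-1}b_2)b_2^{-1} \sim e$, so the chart on the 1-handle is empty once the ends reach $E_1$ and $E_2$. Replacing your final sentence with this compensation argument would repair the proof.
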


\begin{proof}
Let $E$ be the 2-disk where $\sum_{i=1}^{N-1} h(\sigma_i, e)$ and $h=h(e,e)$ are attached. Let $E_1$ and $E_2$ be 2-disks where we want to move the ends of $h$. Consider oriented paths from $E$ to $E_i$ ($i=1,2$) such that the intersection with the chart consists of the transverse intersections with chart edges. Let $b_1$ and $b_2$ be the braids presented by these paths, respectively. Then, by Lemma \ref{lem3-4}, by using $\sum_{i=1}^{N-1} h(\sigma_i, e)$, we transform $h$ to $h(e, b_1^{-1}b_2)$. We move the ends to $E_1$ and $E_2$ along the paths, and then $h(e, b_1^{-1}b_2)$ transforms to a 1-handle with chart loops surrounding the cocore presenting $b_1(b_1^{-1}b_2)b_2^{-1} \sim e$, which is equivalent to a 1-handle with an empty chart. In other words, by applying C1-M2 moves while moving the ends of $h$, we eliminate chart loops surrounding the cocore. See Fig. \ref{fig9}. 
\end{proof}

\begin{figure}[ht]
\centering
\includegraphics*[width=13cm]{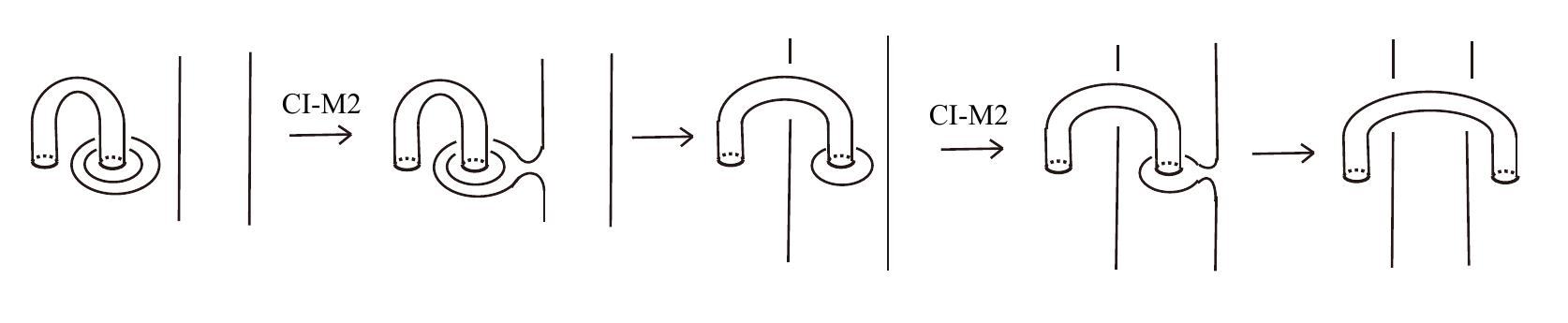}
\caption{Moving an end of a 1-handle to make a bridge. For simplicity, we omit labels and orientations of chart edges. }
\label{fig9}
\end{figure}

Let $E_1, E_2$ be 2-disks such that we have a 1-handle with an empty chart whose ends are attached to $E_1$ and $E_2$, respectively. We say that we {\it have a bridge} between $E_1$ and $E_2$.

\begin{lemma}\label{lem3-7}
Let $\rho$ be a chart loop with the label $i$ which has no
 crossings. Then, $\rho$ is eliminated by an addition of $h(\sigma_i, b)$ to a neighborhood of $\rho$, where $b$ is a braid, and the 1-handle is unchanged. 
In particular, we have 
\begin{equation}\label{eq3-9}
h(\sigma_i, b)+h(\sigma_i,e) \sim h(\sigma_i,b)+h(e,e)
\end{equation}
and 
\begin{equation}\label{eq3-10}
h(\sigma_i, b)+h(e, \sigma_i^\epsilon) \sim h(\sigma_i,b)+h(e,e), 
\end{equation}
where $i \in \{1, \ldots, N-1\}$, $\epsilon \in \{+1, -1\}$ and $b$ is a braid. 
\end{lemma}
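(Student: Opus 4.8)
\textbf{Proof proposal for Lemma \ref{lem3-7}.}

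The plan is to realize the chart loop $\rho$ as (essentially) the cocore of a newly attached 1-handle, and then to cancel it using a CII- or CIII-move together with a CI-M2 move. First I would attach a 1-handle $h$ in a small 3-ball meeting $F$ in a 2-disk $E'$ contained in a tubular neighborhood of $\rho$, with one end on each side of the loop $\rho$, so that the core loop of $h$ runs parallel to $\rho$ once and its cocore bounds a small disk transverse to $\rho$. Choosing the chart on $h$ so that the cocore reads $\sigma_i$ and the core loop reads $b$ — which is legitimate precisely because $\sigma_i$ commutes with $b$, so $h(\sigma_i,b)$ is well-defined — we obtain, near $\rho$, two parallel strands labeled $i$ (one from $\rho$, one from the cocore of $h$) with opposite coorientations (this is where I would, if necessary, invoke Lemma \ref{lem3-1} to flip the loop of $h$ to the orientation that makes the two strands cancel). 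Then a CI-M2 move (creation/annihilation of a bigon between two parallel edges of the same label with opposite orientations) absorbs $\rho$ into the chart of $h$, and since $\rho$ has no crossings and no black vertices, nothing else on $F$ is disturbed; the resulting chart of $h$ is C-move equivalent to $h(\sigma_i,b)$ again. This gives the first assertion.

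For the displayed equivalences, I would specialize $b$ and reinterpret the ambient loop. In \eqref{eq3-9}, the loop $\rho$ to be eliminated is the core loop of $h(\sigma_i,e)$; applying the first part with the 1-handle $h(\sigma_i,b)$ placed in a neighborhood of that core loop removes the $\sigma_i$-loop, turning $h(\sigma_i,e)$ into $h(e,e)$ while leaving $h(\sigma_i,b)$ unchanged. In \eqref{eq3-10}, the loop $\rho$ is instead the loop surrounding the cocore of $h(e,\sigma_i^\epsilon)$; the same argument applies, with a preliminary use of Lemma \ref{lem3-1} (or of the framing-insensitivity noted in the paper) to handle the sign $\epsilon$, and $h(e,\sigma_i^\epsilon)$ becomes $h(e,e)$.

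The main obstacle I anticipate is bookkeeping rather than conceptual: one must check that the loop $\rho$ and the cocore (or core-adjacent loop) of the auxiliary 1-handle really can be made \emph{parallel with matching label and opposite coorientation} inside a single 2-disk $E'$ disjoint from the rest of $\Gamma$, so that the cancellation is a genuine CI-M2 move and not something that creates new crossings or white vertices. This requires knowing that a loop with label $i$ and no crossings bounds a disk on $F$ on (at least) one side whose interior is free of $\Gamma$ — true here because $F$ is unknotted in standard form and we may isotope $\rho$ — and that the framing ambiguity we are ignoring does not obstruct aligning the two strands; both points are handled by Lemma \ref{lem3-1} together with the standing conventions on framings. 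Once the local picture is set up correctly, the elimination and the two displayed identities follow immediately.
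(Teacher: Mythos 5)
There is a genuine gap in the cancellation step. Your mechanism for eliminating $\rho$ is to arrange the label-$i$ strand of $h(\sigma_i,b)$ and the loop $\rho$ as two parallel strands of opposite coorientation inside a single 2-disk $E'$ and annihilate them, and you justify this by asserting that a crossing-free loop of label $i$ bounds a disk on $F$ with interior free of $\Gamma$ (``true here because $F$ is unknotted in standard form and we may isotope $\rho$''). That assertion fails in precisely the situations where the lemma is used: in (\ref{eq3-9}) the loop $\rho$ is the label-$i$ chart loop running along the core loop of $h(\sigma_i,e)$, and in (\ref{eq3-10}) it is the loop encircling the cocore of $h(e,\sigma_i^\epsilon)$; both are essential curves on $F$ plus the attached handles and bound no disk on the surface, so they can never be made parallel to an arc of $h(\sigma_i,b)$ inside one 2-disk, and a single local CI-M2 move cannot annihilate them. (Even for a null-homotopic $\rho$, one CI-M2 move only reconnects two arcs locally; it does not by itself remove a closed loop.)

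The paper's proof avoids this entirely: it applies one CI-M2 move to join $\rho$ to the label-$i$ edge of $h(\sigma_i,b)$ and then \emph{slides an end of the 1-handle once around $\rho$}, unwinding the loop as the end travels (the same mechanism as in Lemma \ref{lem3-8}, where crossings encountered along the way are collected on the handle; here there are none, so the handle returns unchanged). This works for any embedded crossing-free loop, disk-bounding or not, and in particular handles the core-parallel and cocore-parallel loops of (\ref{eq3-9}) and (\ref{eq3-10}). Your identification of which loop plays the role of $\rho$ in each displayed equivalence, and your use of Lemma \ref{lem3-1} to adjust the sign $\epsilon$, are consistent with the paper; the missing idea is the end-sliding step that replaces your disk-parallelism assumption.
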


\begin{proof}
By applying a CI-M2 move between the edges of the label $i$ of $\rho$ and $h(\sigma_i, b)$, and sliding an end of the 1-handle, we eliminate the chart loop $\rho$, and the 1-handle is unchanged. 
The relation (\ref{eq3-9}) is shown by taking $\rho$ as the chart loop along the core loop of the second 1-handle $h(\sigma_i,e)$ (see Fig. \ref{fig7}), and the relation (\ref{eq3-10}) is shown by taking $\rho$ as the chart loop surrounding the cocore of the second 1-handle $h(e, \sigma_i^\epsilon)$. 
\end{proof}

\begin{figure}[ht]
\centering
\includegraphics*[height=5cm]{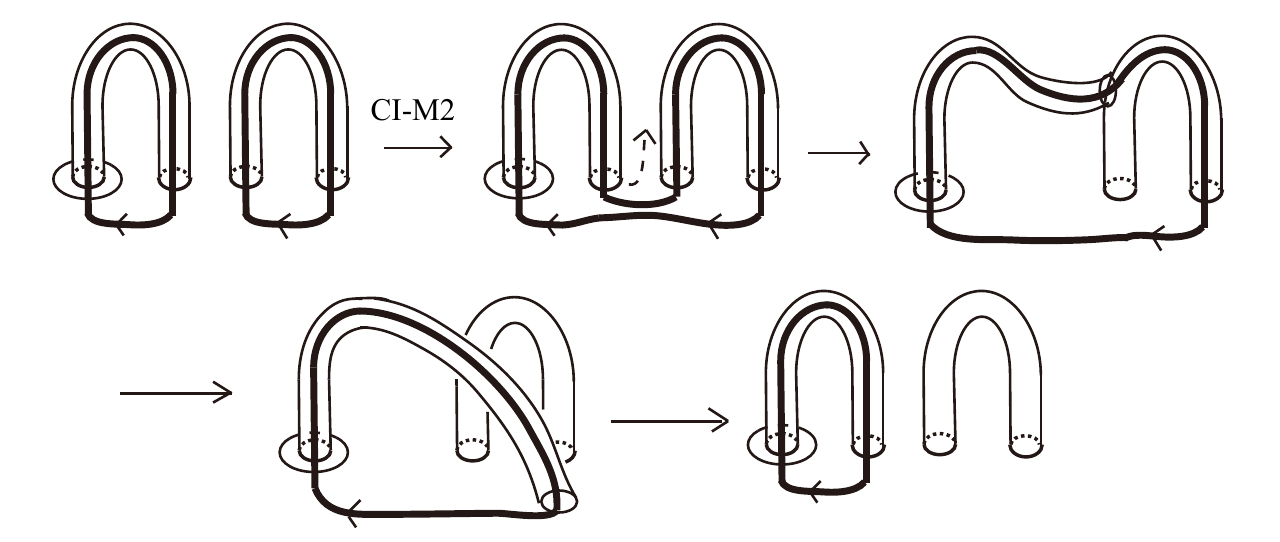}
\caption{$h(\sigma_i, b)+h(\sigma_i,e) \sim h(\sigma_i, b)+h(e, e)$. We omit the label of chart loops, and the part presenting $b$ is an image.}
\label{fig7}
\end{figure}

We remark that by repeating the inverse process of (\ref{eq3-10}) for $b=e$, $h(\sigma_i, e)+h(e,e) \sim h(\sigma_i, e)+h(e, \sigma_i^\epsilon)$ several times, we have Lemma \ref{lem3-4}. 

\begin{lemma}\label{lem3-8}
Let $\rho$ be a chart loop with the label $i$ with crossings such that $\rho$ as a closed path reads a braid $c$. Then, $\rho$ is eliminated by an addition of $h(\sigma_i, b)$ to a neighborhood of the initial point of $\rho$, where $b$ is a braid, and the 1-handle changes to $h(\sigma_i, bc^{-1})$. 
In particular, we have 
\begin{equation}\label{eq3-12}
h(\sigma_i, b)+h(\sigma_i, c) \sim h(\sigma_i,bc)+h(e,c), 
\end{equation}
and 
\begin{equation}\label{eq3-12a}
h(\sigma_i, b)+h(c, \sigma_i^\epsilon) \sim h(\sigma_i,bc^{-\epsilon})+h(c,e), 
\end{equation}
where $i \in \{1, \ldots, N-1\}$, $\epsilon \in \{+1, -1\}$ and $b, c$ are braids such that the 1-handles have only crossings for vertices. 
\end{lemma}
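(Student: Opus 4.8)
The plan is to mimic the proof of Lemma \ref{lem3-7}, but now keeping track of the braid that the chart loop $\rho$ contributes when it is absorbed into the 1-handle. First I would set up the local picture: take $\rho$ a chart loop with label $i$ that has only crossings as vertices, based at a point $x_0$, and reading the braid $c$ as we traverse it from $x_0$ back to $x_0$. Attach $h(\sigma_i, b)$ to a small 2-disk containing $x_0$, so that near $x_0$ we see one edge of label $i$ coming from the cocore chart of the 1-handle (which reads $\sigma_i$), together with the two strands of $\rho$ emanating from $x_0$. As in Lemma \ref{lem3-7}, apply a CI-M2 move to create a bridging edge of label $i$ between $\rho$ and the label-$i$ edge of the 1-handle's chart, and then slide the end of the 1-handle along $\rho$. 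The difference from Lemma \ref{lem3-7} is that, because $\rho$ now carries crossings and hence reads a nontrivial braid $c$, dragging the end of the 1-handle all the way around $\rho$ conjugates/multiplies the chart loop around the core loop by $c^{\pm 1}$; after the loop $\rho$ itself is cancelled, the residual effect is that the core-loop chart of the 1-handle is multiplied by $c^{-1}$, so $h(\sigma_i,b)$ becomes $h(\sigma_i, bc^{-1})$. One must check that $b$ and $c$ commute with the relevant generators so that $h(\sigma_i, bc^{-1})$ is well-defined; this follows since $\rho$ has label $i$ and reads $c$ as a loop, forcing $c$ to commute with $\sigma_i$ (a chart loop of label $i$ is, up to C-moves, a product of generators commuting with $\sigma_i$, exactly as in the analysis behind $h(\sigma_i, b)$ being well-defined).

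Having the general statement, I would derive the two displayed special cases. For \eqref{eq3-12}, take $\rho$ to be the chart loop along the (orientation-reversed) core loop of the second 1-handle $h(\sigma_i, c)$: this loop has label $i$ and reads $c$, exactly as in the derivation of \eqref{eq3-9} in Lemma \ref{lem3-7}. Absorbing $\rho$ into $h(\sigma_i, b)$ eliminates it and changes $h(\sigma_i, b)$ to $h(\sigma_i, bc^{-1})$ — wait, here I need to be careful about orientation conventions: the loop along the reversed core loop reads $c$, and absorbing it multiplies the first handle by $c$ (not $c^{-1}$) because of the orientation reversal built into the core-loop convention, leaving the second handle as $h(e,c)$; this gives $h(\sigma_i,b)+h(\sigma_i,c)\sim h(\sigma_i,bc)+h(e,c)$. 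For \eqref{eq3-12a}, take $\rho$ to be the chart loop surrounding the cocore of the second 1-handle $h(c,\sigma_i^\epsilon)$, whose cocore chart reads $\sigma_i^\epsilon$, so $\rho$ has label $i$ and reads $\sigma_i^\epsilon$... no: there the loop surrounding the cocore is the \emph{core-loop} chart, which reads $\sigma_i^{-\epsilon}$ after orientation reversal, while the cocore still reads $c$. Absorbing this loop into $h(\sigma_i,b)$ removes it, turning $h(\sigma_i,b)$ into $h(\sigma_i, bc^{-\epsilon})$ and leaving the second handle with empty core-loop chart, i.e. $h(c,e)$, which yields $h(\sigma_i,b)+h(c,\sigma_i^\epsilon)\sim h(\sigma_i,bc^{-\epsilon})+h(c,e)$.

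The main obstacle I expect is precisely the bookkeeping of signs and the direction of multiplication: there are several orientation conventions stacked on top of each other (the reversed orientation on the core loop in the definition of $h(a,b)$, the choice of base point, the framing ambiguity, and the direction in which one slides the 1-handle end along $\rho$), and getting $bc$ versus $bc^{-1}$ versus $b^{-1}c$ correct requires drawing the CI-M2 move and the slide carefully, as in Figures \ref{fig7} and \ref{fig8}. I would pin these down by treating $b=e$ first, reducing to Lemma \ref{lem3-7} and its inverse process (the remark following Lemma \ref{lem3-7}), and then checking consistency: \eqref{eq3-12} with $c=e$ must recover \eqref{eq3-9}, and \eqref{eq3-12a} with $c=e$ must recover \eqref{eq3-10}. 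A secondary, more routine point is verifying that every intermediate chart is legitimate — in particular that after the slide the only vertices on the 1-handles are crossings (as hypothesized) and no stray black vertices or white vertices are introduced — which follows because CI-moves and CI-M2 in particular neither create nor destroy black vertices, and the loop $\rho$ was assumed to have only crossings.
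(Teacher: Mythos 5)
Your proposal matches the paper's proof: both eliminate $\rho$ by a CI-M2 move with the label-$i$ edge of $h(\sigma_i,b)$ and then slide an end of the 1-handle around $\rho$, collecting the crossings so that the handle becomes $h(\sigma_i,bc^{-1})$, and both derive \eqref{eq3-12} and \eqref{eq3-12a} by choosing $\rho$ to be, respectively, the label-$i$ loop along the core loop of $h(\sigma_i,c)$ (which as a closed path reads $c^{-1}$, whence $bc$) and the label-$i$ loop parallel to the cocore of $h(c,\sigma_i^\epsilon)$ (whence $bc^{-\epsilon}$). The orientation bookkeeping you flag is exactly the point the paper settles by its conventions and Fig.~\ref{fig11}, and your consistency checks against \eqref{eq3-9} and \eqref{eq3-10} resolve it the same way.
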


\begin{proof}
Similar to the proof of Lemma \ref{lem3-7}, by applying a CI-M2 move and sliding an end of the 1-handle, we eliminate the chart loop $\rho$. When the 1-handle slides across a crossing, the crossing is collected on the 1-handle. It follows that when the moving end comes back to a neighborhood of  the other end, the resulting 1-handle is $h(\sigma_i, bc^{-1})$. See Fig. \ref{fig11}. The relation (\ref{eq3-12}) is shown by taking $\rho$ as the chart loop around the core loop of the second 1-handle $h(\sigma_i,e)$. Since the braid $c$ of $h(\sigma_i, c)$ is read by the orientation-reversed core loop, the closed path $\rho$ reads $c^{-1}$ and the first 1-handle becomes $h(\sigma_i, bc)$. The relation (\ref{eq3-12a}) is shown by taking $\rho$ as the chart loop surrounding the cocore of the second 1-handle $h(e, \sigma_i^\epsilon)$. Since the braid $c$ of $h(c, \sigma_i^\epsilon)$ is read by the core loop, $\rho$ reads $c$ and the first 1-handle becomes $h(\sigma_i, bc^{-\epsilon})$. See also \cite[Figs. 18, 19 and Lemma 6.7]{N5}. 
\end{proof}

\begin{figure}[ht]
\centering
\includegraphics*[width=12cm]{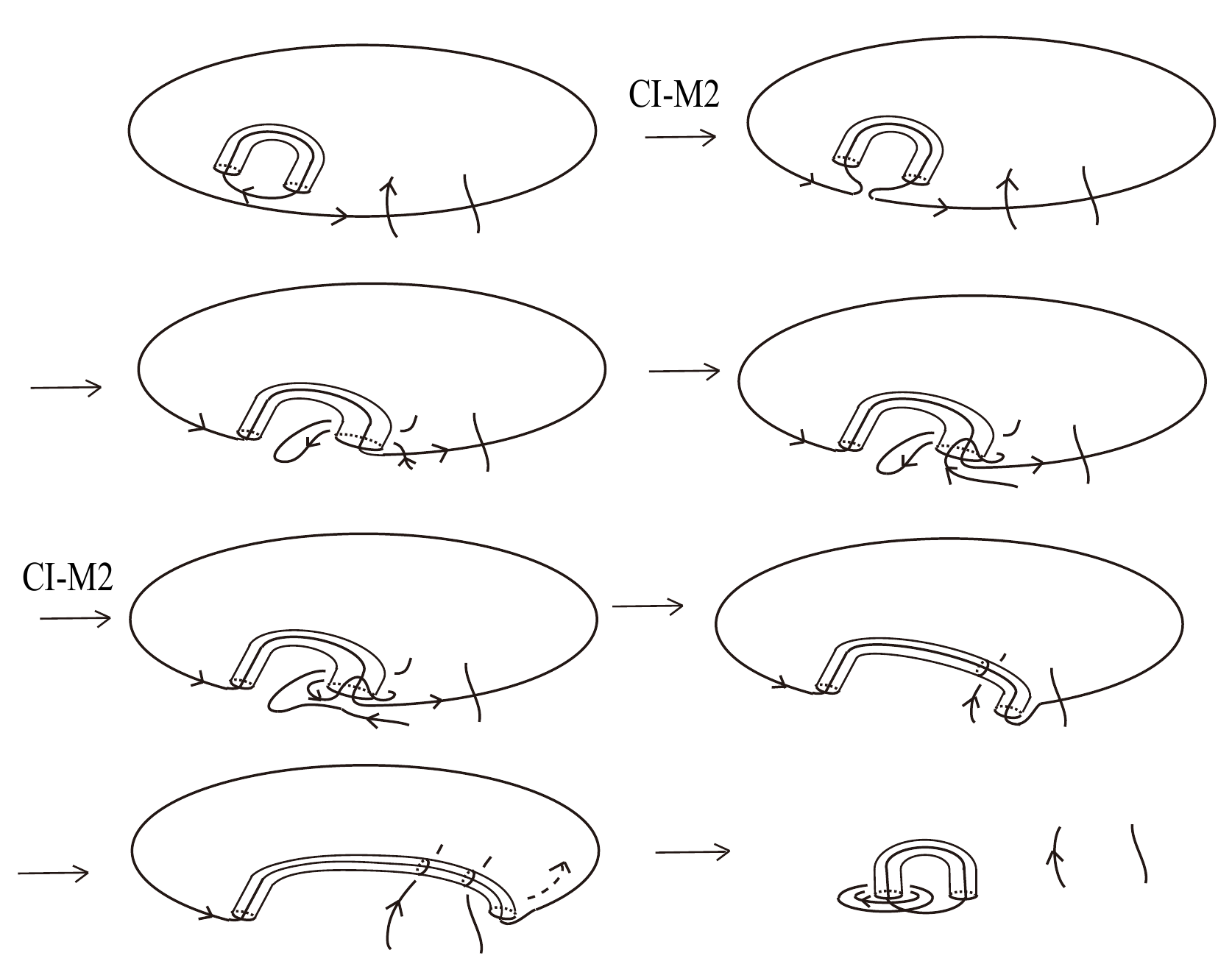}
\caption{Collecting crossings on a 1-handle. For simplicity, we omit labels of chart edges.}
\label{fig11}
\end{figure}

\begin{proof}[Proof of Theorem \ref{thm1-7}]
We show the second inequality (\ref{eq1-5}). 
If $b(\Gamma)=0$, then, by Lemma \ref{lem3-9}, white vertices consist of several pairs such that the types of each pair is $\{w_{i,j}, w_{j,i}\}$. By Lemma \ref{lem3-5}, by an addition of 1-handles $\sum_{i=1}^{N-1} h(\sigma_i,e)+h(e,e)$, we transform the 1-handle $h(e,e)$ to make a bridge between neighborhoods of a pair of white vertices of types $\{w_{i,j}, w_{j,i}\}$. Then, by an ambient isotopy and C1-moves, we eliminate the pair of white vertices as indicated in Fig. \ref{fig12}. Then, $\sum_{i=1}^{N-1} h(\sigma_i,e)$ is unchanged. 
By adding $h(e,e)$ and applying this process to every pair of types $\{w_{i,j}, w_{j,i}\}$, we deform $(F, \Gamma)$ to a branched covering surface-knot $(F', \Gamma')+\sum_{i=1}^{N-1} h(\sigma_i,e)$, where $F'=F+\sum_{j=1}^n h$ for a non-negative integer $n$ and $\Gamma'$ is a chart without white vertices. Since we obtained $(F', \Gamma')+\sum_{i=1}^{N-1} h(\sigma_i,e)$ from $(F, \Gamma)$ by an addition of $\sum_{i=1}^{N-1} h(\sigma_i,e)$ and $w(\Gamma)/2$ copies of $h(e,e)$, we see that $n=w(\Gamma)/2$. 

The chart $\Gamma'$ consists of several loops with $c(\Gamma)$ number of crossings. By Lemmas \ref{lem3-7} and \ref{lem3-8}, by using $\sum_{i=1}^{N-1} h(\sigma_i,e)$, we eliminate loops. 
The result is $(F, \emptyset)$ with $w(\Gamma)/2$ copies of $h(e,e)$ and $\sum_{i=1}^{N-1} h(\sigma_i,b_i)$ for some braids $b_i$ such that vertices are only crossings on $\sum_{i=1}^{N-1} h(\sigma_i, b_i)$. Then add several copies of $h(e,e)$ so that we have $c(\Gamma)$ copies of $h(e,e)$. Each crossing consists of a chart loop along the core loop and a chart loop $\rho$ parallel to the cocore of a 1-handle such that there is only one crossing on $\rho$. 
Let $E$ be the 2-disk where 1-handles are attached. Let $c$ be a crossing such that there exists a path from $E$ to $c$ which does not intersect with chart edges nor vertices. 
For such a crossing $c$, let $j$ be the label of the consisting loop $\rho$ parallel to the cocore. 
By the inverse process of Lemma \ref{lem3-7}, by using $h(\sigma_j, b_j)$, we change a 1-handle $h(e,e)$ to $h(\sigma_j, e)$. By an ambient isotopy, we move $h=h(\sigma_j,e)$ to a neighborhood of $c$, and by Lemma \ref{lem3-8},  we apply a CI-M2 move and slide an end of the 1-handle $h$ along $\rho$. Thus we collect the crossing $c$ on $h$, and then $h$ transforms to the form $h(\sigma_j, \sigma_k^\epsilon)$ ($|j-k|>1$, $\epsilon \in \{+1, -1\}$), attached on $E$. By repeating this process, we collect each crossing on a 1-handle, and thus we have 1-handles in the form $h(\sigma_j, \sigma_k^\epsilon)$ ($|j-k|>1$, $\epsilon \in \{+1, -1\}$), and $\sum_{i=1}^{N-1} h(\sigma_i, e)$, and $\max \{0, w(\Gamma)/2-c(\Gamma)\}$ copies of 1-handles $h(e,e)$, and we have a weak simplified form. 
Thus we have the second inequality $u_w(F, \Gamma) \leq \max\{w(\Gamma)/2, c(\Gamma)\}+N-1$. 

If $b(\Gamma)>0$ and $\Gamma$ has black vertices, then, by Lemma \ref{lem3-10}, the number of edges connected with black vertices with the orientation from a black vertex equals that with the orientation toward a black vertex. Since an edge with the label $i$ connected with a black vertex changes to an edge with the label $j$ ($|i-j|=1$) connected with a black vertex by increasing one white vertex as in Fig. \ref{fig13}, by increasing at most $(b(\Gamma)/2)(N-2)$ white vertices, we deform $\Gamma$ so that the number of edges connected with black vertices with the orientation from a black vertex and with the label $i$ equals that with the orientation toward a black vertex and with the label $i$. By considering the chart obtained from $\Gamma$ by making bridges by 1-handles between neighborhoods of black vertices, and eliminating black vertices and joining the edges formerly connected with them through 1-handles forming bridges, by Lemma \ref{lem3-9} we see that for this new chart and hence also $\Gamma$, the number of white vertices of type $w_{i,j}$ equals that of type $w_{j,i}$. Thus, by the same argument as in the case $b(\Gamma)=0$, we see that by an addition of $\sum_{i=1}^{N-1} h(\sigma_i,e)$ and $\lfloor \{w(\Gamma)+(b(\Gamma)/2)(N-2)\}/2\rfloor=\lfloor w(\Gamma)/2+(b(\Gamma)/4)(N-2)\rfloor$ copies of $h(e,e)$, we deform the union of $\Gamma$ and the charts on 1-handles to a chart without white vertices and moreover we have $\sum_{i=1}^{N-1} h(\sigma_i,e)$. The rest of the argument is the same as the case $b(\Gamma)=0$: we eliminate chart loops by using $\sum_{i=1}^{N-1} h(\sigma_i,e)$, and by an addition of 1-handles $h(e,e)$ we move each crossing to be on one 1-handle. By applying CII-moves if necessary, black vertices become the endpoints of free edges, and the resulting chart consists of several free edges and 1-handles with chart loops; thus we have the weak simplified form and we have the first inequality  (\ref{eq1-4}): $u_w(F, \Gamma) \leq \max\{\lfloor w(\Gamma)/2+(b(\Gamma)/4)(N-2)\rfloor, c(\Gamma)\}+N-1$. 
\end{proof}

\begin{figure}[ht]
\centering
\includegraphics*[width=13cm]{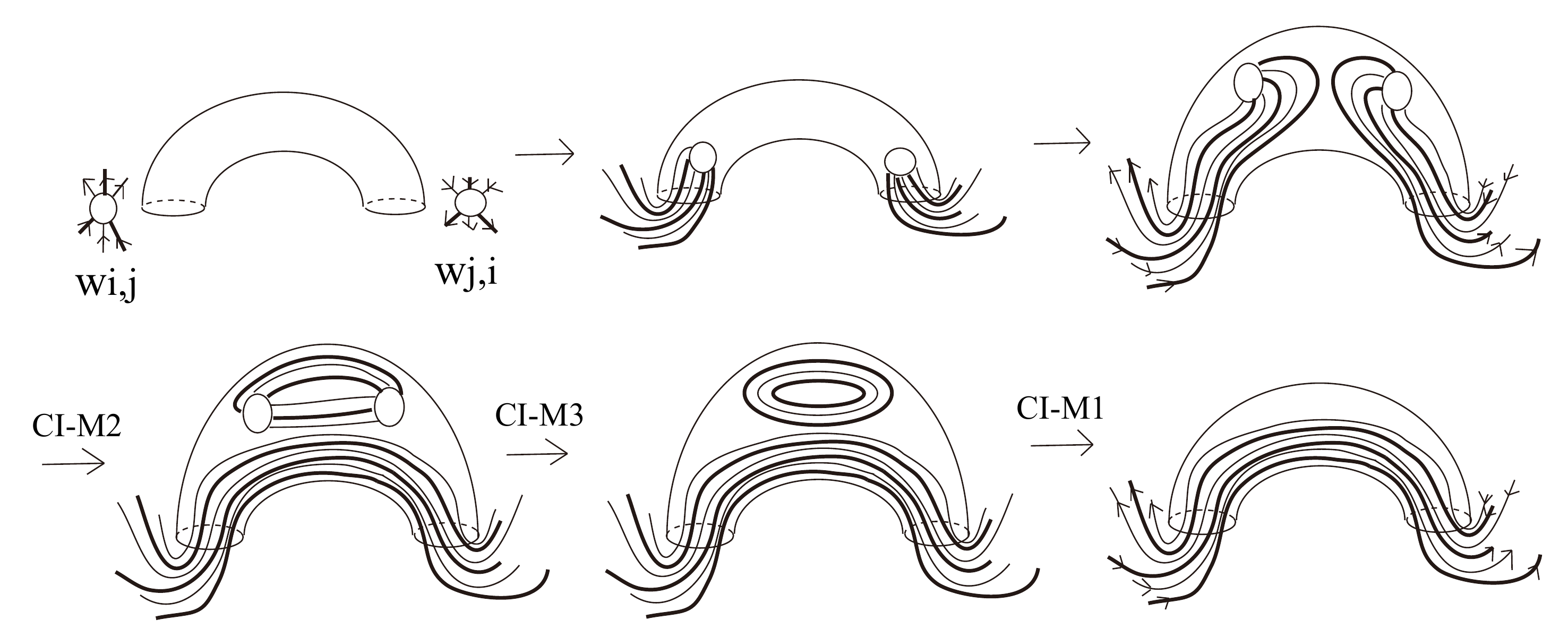}
\caption{Elimination of a pair of  white vertices $w_{i,j}$ and $w_{j,i}$. We omit labels and orientations of chart edges.}
\label{fig12}
\end{figure}

\begin{figure}[ht]
\centering
\includegraphics*[height=3cm]{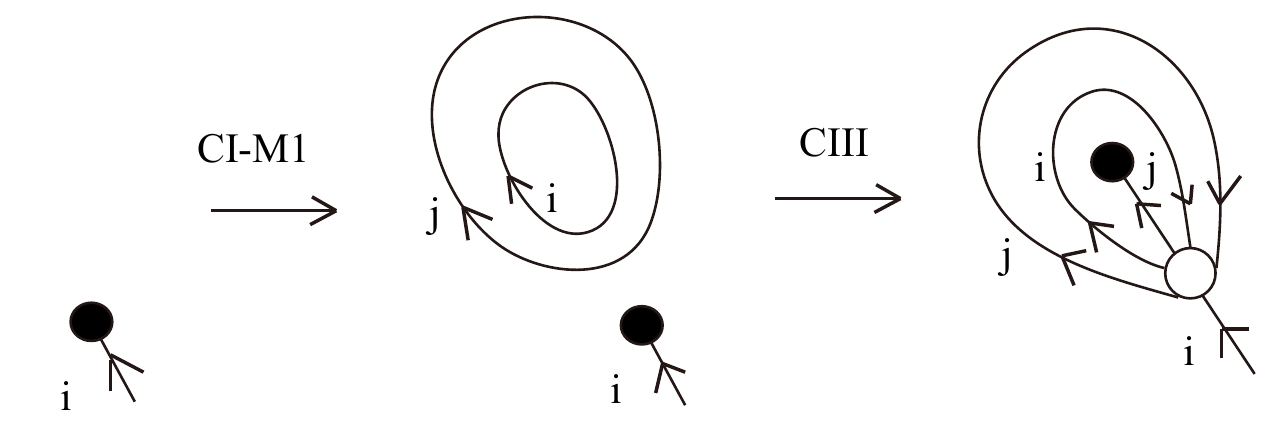}
\caption{Changing the label of the chart edge connected with a black vertex. The orientations of chart edges are an example. }
\label{fig13}
\end{figure}

\begin{lemma}\label{lem3-9}
Let $\Gamma$ be a chart of degree $N$ such that $b(\Gamma)=0$, where $b(\Gamma)$ is the number of black vertices. Then, the number of white vertices of type $w_{i,j}$ equals that of type $w_{j,i}$, where $|i-j|=1$, $i,j \in \{1,\ldots, N-1\}$.  
\end{lemma}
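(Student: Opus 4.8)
The plan is to prove, for each unordered pair $\{i,j\}$ with $|i-j|=1$, the equality $\#w_{i,j}=\#w_{j,i}$ by a flux count on the edges of $\Gamma$ carrying the label $i$. First I would pin down the local picture at a white vertex: if $w$ has type $w_{i,j}$, then by definition the three edges oriented toward $w$ carry labels with multiset $\{i,i,j\}$ and $|i-j|=1$, and by the chart axiom (Fig.~\ref{fig2}) together with the braid relation $\sigma_i\sigma_j\sigma_i=\sigma_j\sigma_i\sigma_j$ the three edges oriented away from $w$ carry labels with multiset $\{j,j,i\}$. Consequently, among the edge-ends at $w$ labeled $i$, exactly two point toward $w$ and exactly one points away; at a vertex of type $w_{j,i}$ these counts are interchanged; and at a white vertex whose two labels both differ from $i$ there are no edge-ends labeled $i$. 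Write $\delta_i(w)$ for the number of edge-ends at $w$ labeled $i$ pointing away from $w$ minus the number pointing toward $w$, so that $\delta_i(w)=-1$ if $w$ has type $w_{i,i-1}$ or $w_{i,i+1}$, $\delta_i(w)=+1$ if $w$ has type $w_{i-1,i}$ or $w_{i+1,i}$, and $\delta_i(w)=0$ otherwise.

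Next I would sum $\delta_i(w)$ over all white vertices of $\Gamma$. Since $b(\Gamma)=0$ and an edge through a crossing is regarded as a single edge, every chart edge labeled $i$ is either a chart loop, which meets no black or white vertex, or an arc whose two endpoints are white vertices; being oriented, it points away from its tail and toward its head. Hence each label-$i$ edge contributes $+1$ to exactly one ``away'' tally and $+1$ to exactly one ``toward'' tally, and chart loops contribute nothing, so $\sum_w\delta_i(w)=0$. Comparing with the local computation above, and adopting the convention $\#w_{0,1}=\#w_{1,0}=\#w_{N-1,N}=\#w_{N,N-1}=0$, we obtain for every $i\in\{1,\dots,N-1\}$ the identity
\[
\#w_{i-1,i}+\#w_{i+1,i}=\#w_{i,i-1}+\#w_{i,i+1}.
\]
Finally, setting $d_p:=\#w_{p,p+1}-\#w_{p+1,p}$ for $p=1,\dots,N-2$, the identity at $i=1$ gives $d_1=0$, while the identity at $i$ with $2\le i\le N-2$ rearranges to $d_{i-1}=d_i$; induction then yields $d_p=0$ for all $p$, which is the claim (the cases $N\le 2$ being vacuous).

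The step I expect to be the main obstacle is the first one: reading off the outgoing label multiset of a type-$w_{i,j}$ white vertex from the incoming one, since that is exactly where the chart axioms and the precise shape of the braid relation must be invoked. Once that local model is fixed, everything else is bookkeeping.
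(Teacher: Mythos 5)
Your proof is correct and follows essentially the same route as the paper's: both count, for each label $i$, the label-$i$ edge-ends at white vertices oriented toward versus away from the vertex, use the absence of black vertices to conclude the two tallies agree, and then induct on $i$ starting from the boundary case $i=1$. Your $\delta_i$/$d_p$ bookkeeping is just a more formal phrasing of the argument the paper gives in prose.
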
 

\begin{proof}
In the set of edges connected with white vertices, 
an edge with the label $1$ appears only as an edge connected with a white vertex of type $w_{1,2}$ or $w_{2,1}$. Around a white vertex $w$ of type $w_{1,2}$ (resp. $w_{2,1}$), the number of edges with the label $1$ with the orientation toward $w$ is $2$ (resp. $1$), and the number of those with the orientation from $w$ is $1$ (resp. $2$). Since there are no black vertices, the number of edges around white vertices with the label $1$ and the orientation toward the vertex equals that with the orientation from the vertex, where we count an edge twice if each of its endpoint is connected with a white vertex. It follows that the number of white vertices of type $w_{1,2}$ equals that of type $w_{2,1}$. 
Thus, repeating the same argument for edges of label $i$ for $i=2,3,\ldots, N-2$ in this order, we obtain the required result. 
\end{proof}

\begin{lemma}\label{lem3-10}
For a chart, the number of edges connected with black vertices with the orientation from a black vertex equals that with the orientation toward a black vertex. 
\end{lemma}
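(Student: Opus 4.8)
The plan is to prove the equality by a single global incidence count over the vertices of $\Gamma$, using that each non-loop edge of $\Gamma$ contributes exactly one ``outgoing'' end and one ``incoming'' end, while at every white vertex and every crossing the numbers of inward-oriented and outward-oriented incident edge-ends coincide.

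First I would set up notation. For a vertex $v$ of $\Gamma$, let $d^{-}(v)$ (resp.\ $d^{+}(v)$) be the number of edge-ends incident to $v$ oriented toward $v$ (resp.\ away from $v$), where, following the conventions of Section \ref{sec2}, an edge carrying crossings is regarded as a single edge, so that the two ends of such an edge lie at black or white vertices rather than at crossings. The key preliminary remark is that the orientation of an edge is consistent along the entire edge; in particular it is not reversed when the edge passes through a crossing, since a crossing is a transverse intersection of edges each of which runs straight through it. Consequently every non-loop edge has a well-defined tail end and head end, each at a vertex of degree $1$ or $6$, and summing over all vertices yields
\[
\sum_{v} d^{-}(v) \;=\; \sum_{v} d^{+}(v),
\]
both sides being equal to the number of non-loop edges of $\Gamma$; chart loops (edges with no vertices) contribute nothing.

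Next I would insert the local data. At a white vertex three of the six incident edge-ends are oriented inward and three outward, so $d^{-}=d^{+}=3$; at a crossing two ends point inward and two outward, so $d^{-}=d^{+}=2$; and at a black vertex the single incident edge-end is either inward or outward, so $(d^{-}(v),d^{+}(v))$ equals $(1,0)$ or $(0,1)$. Subtracting the white-vertex contributions $3\,w(\Gamma)$ and the crossing contributions $2\,c(\Gamma)$ from both sides of the displayed identity, they cancel, and one is left with
\[
\sum_{v\ \text{black}} d^{+}(v) \;=\; \sum_{v\ \text{black}} d^{-}(v).
\]
The left-hand side is the number of black vertices whose incident edge is oriented away from the black vertex, that is, the number of edges connected with a black vertex with the orientation from a black vertex; the right-hand side is the corresponding count with the orientation toward a black vertex. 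This is the assertion of the lemma.

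I do not anticipate a real obstacle: the argument is a formal consequence of the local vertex pictures of Fig.~\ref{fig2}. The only point requiring attention is the treatment of edges carrying crossings, namely that a crossing is never the endpoint of an edge and that orientations are globally coherent along such edges, so that the tail/head bookkeeping underlying the global identity is well posed. (The same count performed separately for the edges of a fixed label $\ell$ yields a label-refined version $B^{\mathrm{from}}_\ell - B^{\mathrm{toward}}_\ell = \sum_{w\ \text{white}}(d^{-}_\ell(w)-d^{+}_\ell(w))$, which is what makes the relabeling step in the proof of Theorem \ref{thm1-7} possible, but it is not needed for the statement as given.)
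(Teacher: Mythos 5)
Your proof is correct and is essentially the paper's own argument, which likewise deduces the lemma from the fact that at each white vertex and each crossing the numbers of inward- and outward-oriented incident edge-ends coincide, so that in the global count only the black-vertex contributions survive. The only cosmetic wobble is that you first declare crossings not to be edge endpoints and then assign them $d^{-}=d^{+}=2$; either bookkeeping convention works, but you should pick one.
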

\begin{proof}
We see the result from the fact that around a crossing/white vertex, the number of the edges with the orientation from the crossing/white vertex equals that with the orientation toward it. 
\end{proof}

\section{To more simplified forms I}\label{sec4}

In this section, we show Theorem \ref{thm1-8}. Let $N$ be the degree of branched covering surface-knots. 

\begin{lemma}\label{lem4-3a}
We have 
\[
h(\sigma_j, b_j)+h(\sigma_k, b_k)+h(\sigma_j, \sigma_k^{\epsilon}) \sim h(\sigma_j, b_j)+h(\sigma_k, b_k)+h(\sigma_k, \sigma_j^{-\epsilon}),
\]
where $|j-k|>1$ $(j,k \in \{1, \ldots, N-1\})$, $\epsilon\in\{+1, -1\}$, and $b_j$ and $b_k$ are braids. 
\end{lemma}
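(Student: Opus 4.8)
The plan is to deform only the third $1$-handle, carrying it from $h(\sigma_j,\sigma_k^{\epsilon})$ to $h(\sigma_k,\sigma_j^{-\epsilon})$ in four moves; the first two $1$-handles, which supply the label-$j$ and label-$k$ chart loops needed by Lemmas~\ref{lem3-7} and~\ref{lem3-8}, will be altered only temporarily and restored at the end. Since $\sigma_j$ commutes with $b_j$ and, because $|j-k|>1$, with $\sigma_k^{\epsilon}$, and likewise for $\sigma_k$ and $b_k$, every intermediate $1$-handle occurring below is well defined and has only crossings for vertices, so Lemmas~\ref{lem3-7} and~\ref{lem3-8} apply at each step.

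First I would apply (\ref{eq3-12}) to the pair $h(\sigma_j,b_j)$, $h(\sigma_j,\sigma_k^{\epsilon})$, which share the label $j$ in the first slot, with $b=b_j$ and $c=\sigma_k^{\epsilon}$, to get
\[
h(\sigma_j,b_j)+h(\sigma_k,b_k)+h(\sigma_j,\sigma_k^{\epsilon})
\;\sim\; h(\sigma_j,b_j\sigma_k^{\epsilon})+h(\sigma_k,b_k)+h(e,\sigma_k^{\epsilon}).
\]
Then I would apply (\ref{eq3-10}) to the pair $h(\sigma_k,b_k)$, $h(e,\sigma_k^{\epsilon})$ to eliminate the chart loop around the cocore of the third handle, and then apply (\ref{eq3-9}) in reverse to the pair $h(\sigma_k,b_k)$, $h(e,e)$ to grow a label-$k$ chart loop along a core loop of the third handle:
\[
h(\sigma_j,b_j\sigma_k^{\epsilon})+h(\sigma_k,b_k)+h(e,\sigma_k^{\epsilon})
\;\sim\; h(\sigma_j,b_j\sigma_k^{\epsilon})+h(\sigma_k,b_k)+h(e,e)
\;\sim\; h(\sigma_j,b_j\sigma_k^{\epsilon})+h(\sigma_k,b_k)+h(\sigma_k,e).
\]

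Finally I would apply (\ref{eq3-12a}) in reverse to the pair $h(\sigma_j,b_j\sigma_k^{\epsilon})$, $h(\sigma_k,e)$, using the label-$j$ chart loop of the first handle to create a chart loop with label $j$ around the cocore of the third handle. As (\ref{eq3-12a}) is valid for both signs, I would take the created loop to present $\sigma_j^{-\epsilon}$; then, reading (\ref{eq3-12a}) backwards with $i=j$ and $c=\sigma_k$, the first handle returns from $h(\sigma_j,b_j\sigma_k^{\epsilon})$ to $h(\sigma_j,b_j\sigma_k^{\epsilon}\sigma_k^{-\epsilon})=h(\sigma_j,b_j)$ --- the factor $\sigma_k^{\epsilon}$ introduced in the first move cancels exactly --- while the third handle becomes $h(\sigma_k,\sigma_j^{-\epsilon})$. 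Concatenating the four equivalences gives
\[
h(\sigma_j,b_j)+h(\sigma_k,b_k)+h(\sigma_j,\sigma_k^{\epsilon})
\;\sim\; h(\sigma_j,b_j)+h(\sigma_k,b_k)+h(\sigma_k,\sigma_j^{-\epsilon}),
\]
which is the assertion.

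Once the conventions of Lemmas~\ref{lem3-7} and~\ref{lem3-8} are fixed, this is essentially a four-line word calculation, so I do not expect a serious geometric obstacle. The point that will require care is the sign and orientation bookkeeping in the two uses of Lemma~\ref{lem3-8} --- in particular, which braid is read along a core loop versus along its reverse --- together with the observation that it is precisely the freedom of sign in the backward use of (\ref{eq3-12a}) that makes the detour through $h(\sigma_j,b_j\sigma_k^{\epsilon})$ close up and return the first two handles to their original form.
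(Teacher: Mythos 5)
Your proposal is correct and follows essentially the same route as the paper: the identical four-step chain through the intermediate forms $h(\sigma_j,b_j\sigma_k^{\epsilon})+h(\sigma_k,b_k)+h(e,\sigma_k^{\epsilon})$, then $+\,h(e,e)$, then $+\,h(\sigma_k,e)$, and finally $+\,h(\sigma_k,\sigma_j^{-\epsilon})$. The only differences are cosmetic citations (you invoke Lemma~\ref{lem3-7} where the paper invokes Lemma~\ref{lem3-8} for the loop eliminations without crossings, and you read the last step as the reverse of (\ref{eq3-12a}) where the paper reads it as a forward slide of $h(\sigma_k,e)$ along the label-$k$ loop of the first handle), and the sign bookkeeping matches.
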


\begin{proof}
Assume that we have $h(\sigma_j, b_j)+h(\sigma_k, b_k)+h(\sigma_j, \sigma_k^{\epsilon})$. 
Put $h_1=h(\sigma_j, b_j)$, $h_2=h(\sigma_k, b_k)$, and $h_3=h(\sigma_j, \sigma_k^{\epsilon})$. 
By Lemma \ref{lem3-8}, by applying a CI-M2 move to the edges with the label $j$ of $h_1$ and $h_3$, and sliding an end of $h_1$ along $h_3$, $h_1$ becomes $h(\sigma_j, b_j \sigma_k^\epsilon)$ and $h_3$ becomes $h(e, \sigma_k^\epsilon)$, and we have 
\[
h(\sigma_j, b_j\sigma_k^\epsilon)+h(\sigma_k, b_k)+h(e,  \sigma_k^{\epsilon}).
\]
By applying Lemma \ref{lem3-8} to $h_3=h(e,  \sigma_k^{\epsilon})$ and $h_2=h(\sigma_k, b_k)$, $h_3$ becomes $h(e, e)$, and $h_2$ is unchanged. We have 
\begin{equation}\label{eq4-1a}
h(\sigma_j, b_j\sigma_k^\epsilon)+h(\sigma_k, b_k)+h(e,e).
\end{equation}
By applying the inverse process of Lemma \ref{lem3-8} to $h_2=h(\sigma_k, b_k)$ and $h_3=h(e,e)$, $h_3$ becomes $h(\sigma_k, e)$ and $h_2$ is unchanged, and we have 
\[
h(\sigma_j, b_j\sigma_k^\epsilon)+h(\sigma_k, b_k)+h(\sigma_k,e).
\]
By applying Lemma \ref{lem3-8} to $h_1=h(\sigma_j, b_j\sigma_k^\epsilon)$ and $h_3=h(\sigma_k,e)$, by applying a CI-M2 move  to the edges with the label $k$ and collecting a crossing on $h_3$, $h_3$ becomes $h(\sigma_k, \sigma_j^{-\epsilon})$ and $h_1$ becomes $h(\sigma_j, b_j)$, and we have 
\[
h(\sigma_j, b_j)+h(\sigma_k, b_k)+h(\sigma_k,\sigma_j^{-\epsilon}),
\]
which is the required result.
\end{proof}

\begin{lemma}\label{lem4-5}
For $j<k$ with $|j-k|>1$, 
\begin{equation}\label{eq4-01}
\sum_{i=1}^{N-1} h(\sigma_i, b_i)+h(\sigma_j, \sigma_k^\epsilon) \sim \sum_{i=1}^{N-1} h(\sigma_i, b_i)+h(\sigma_1, \sigma_3^\epsilon), 
\end{equation}
and for $j>k$ with $|j-k|>1$, 
\begin{equation}\label{eq4-02}
\sum_{i=1}^{N-1} h(\sigma_i, b_i)+h(\sigma_j, \sigma_k^\epsilon) \sim \sum_{i=1}^{N-1} h(\sigma_i, b_i)+h(\sigma_1, \sigma_3^{-\epsilon}), 
\end{equation}
where $j,k \in \{1, \ldots, N-1\}$, $\epsilon \in \{+1, -1\}$,  and $b_i$ are braids. 
\end{lemma}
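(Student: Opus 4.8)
The plan is to reduce the general pair $(j,k)$ of labels to the specific pair $(1,3)$ by a sequence of "index shifts", each of which changes one of the two labels of the auxiliary 1-handle $h(\sigma_j,\sigma_k^\epsilon)$ by $1$, while keeping $\sum_{i=1}^{N-1}h(\sigma_i,b_i)$ available throughout. The key observation is that, since we have $h(\sigma_i,b_i)$ for \emph{every} label $i$, we have in particular $h(\sigma_{j\pm1}, b_{j\pm1})$ and $h(\sigma_{k\pm1}, b_{k\pm1})$ at our disposal, and Lemma \ref{lem4-3a} lets us swap the roles of the two labels of the auxiliary handle (at the cost of inverting the sign $\epsilon$). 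So the real engine is a single-label shift move: using the handle $h(\sigma_{j\pm1},b_{j\pm1})$ and a CIII-type manipulation (the move of Fig.~\ref{fig13}, i.e. passing an edge across a white vertex, together with the crossing-collecting Lemma \ref{lem3-8}), one transforms $h(\sigma_j,\sigma_k^\epsilon)$ into $h(\sigma_{j\pm1},\sigma_k^{\epsilon'})$ for an appropriate sign $\epsilon'$, leaving the other handles in the standard form $\sum h(\sigma_i,\cdot)$ (the $b_i$ may change, but that is immaterial since the right-hand side also allows arbitrary $b_i$).

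Concretely, first I would establish the shift move for the \emph{first} slot: for any label $j$ with $1\le j\le N-2$ and $|j+1-k|>1$,
\begin{equation*}
\sum_{i=1}^{N-1} h(\sigma_i,b_i)+h(\sigma_j,\sigma_k^\epsilon)\sim \sum_{i=1}^{N-1}h(\sigma_i,b_i')+h(\sigma_{j+1},\sigma_k^{\epsilon}),
\end{equation*}
and similarly one lowering $j$ to $j-1$. Granting this, one drives $j$ down to $1$: at each step $|j-k|>1$ is preserved because we only decrease $j$ while $k>j$, keeping the gap $\ge 2$ (the case $j>k$ is first flipped to $j<k$ via Lemma \ref{lem4-3a}, which is why the sign becomes $-\epsilon$ in \eqref{eq4-02}). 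Once $j=1$, apply Lemma \ref{lem4-3a} to interchange the slots, obtaining $h(\sigma_k,\sigma_1^{-\epsilon})$, and now run the same shift move on the (new) first slot to push $k$ down to $3$ — it cannot go below $3$ without violating the gap condition $|3-1|>1$. A final application of Lemma \ref{lem4-3a} brings it back to $h(\sigma_1,\sigma_3^{\epsilon})$ with the correct sign. Throughout, $\sum_{i=1}^{N-1}h(\sigma_i,\cdot)$ is regenerated at each stage, so the left factor stays in the required shape.

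The main obstacle is proving the single-label shift move cleanly: one must exhibit the explicit chart manipulation that turns a $\sigma_j$-loop into a $\sigma_{j+1}$-loop using the handle $h(\sigma_{j+1},b_{j+1})$. This is exactly a local CIII move — sliding the relevant chart loop across a white vertex of type $w_{j,j+1}$ or $w_{j+1,j}$ created near the handle, as in Fig.~\ref{fig13} — combined with absorbing the resulting crossing onto a 1-handle via Lemma \ref{lem3-8}; I expect the bookkeeping of orientations and the resulting sign $\epsilon'$ to be the delicate point, and I would verify it by a figure. I would also note that, since $h(\sigma_j,\sigma_k^\epsilon)\sim h(\sigma_j^{-1},\sigma_k^{-\epsilon})$-type relations (Lemma \ref{lem3-1}) and the inverse relations in Lemmas \ref{lem3-7}, \ref{lem3-8} are available, each shift move is reversible, so the chain of equivalences can be read in either direction — this is what makes the reduction to the single normal form $h(\sigma_1,\sigma_3^\epsilon)$ legitimate rather than merely a one-way simplification.
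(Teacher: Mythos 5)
Your high-level strategy --- reduce $(j,k)$ to $(1,3)$ by unit index shifts on one slot at a time, using Lemma \ref{lem4-3a} to swap slots at the cost of inverting $\epsilon$ --- is exactly the paper's strategy, and your sign bookkeeping (an even number of swaps for $j<k$, an odd number for $j>k$) is consistent with (\ref{eq4-01}) and (\ref{eq4-02}). But the single-label shift move, which you yourself identify as ``the main obstacle,'' is the entire content of the lemma, and the mechanism you propose for it does not work. A CIII move and the move of Fig.~\ref{fig13} both require a black vertex (Fig.~\ref{fig13} changes the label of an edge \emph{terminating at a black vertex} by creating a white vertex), whereas the charts of the 1-handles in this lemma consist only of loops and crossings: there is no black vertex to pass anything across, and no white vertex to slide a loop over. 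So the step you defer to ``a figure'' cannot be carried out by the route you indicate.

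The paper's actual mechanism for the shift is different and never introduces white or black vertices. To replace $h(\sigma_j,\sigma_k^\epsilon)$ by $h(\sigma_j,\sigma_{k-1}^\epsilon)$ (assuming $|j-k|>1$ and $|j-(k-1)|>1$), one inserts a cancelling pair into the cocore-parallel loops of the two handles with \emph{adjacent} labels, $h(\sigma_{k-1},b_{k-1})\sim h(\sigma_{k-1},\sigma_j^{-\epsilon}\sigma_j^{\epsilon}b_{k-1})$ and $h(\sigma_k,b_k)\sim h(\sigma_k,\sigma_j^{-\epsilon}\sigma_j^{\epsilon}b_k)$, and then slides $h(\sigma_j,\sigma_k^\epsilon)$ along their cocores collecting the four resulting crossings via Lemma \ref{lem4-2} or Lemma \ref{lem3-8}. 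This turns its second slot into the word $\sigma_k^\epsilon\sigma_{k-1}^\epsilon\sigma_k^\epsilon\sigma_{k-1}^{-\epsilon}\sigma_k^{-\epsilon}$, which equals $\sigma_{k-1}^\epsilon$ by the braid relation, while restoring $b_{k-1}$ and $b_k$ to their original values --- so the $b_i$ on the right-hand side are the \emph{same} $b_i$, which the statement requires; your remark that the $b_i$ ``may change'' and that this is immaterial would actually leave the lemma as stated unproven. The first-slot shift (\ref{eq4-2}) is then deduced from the second-slot shift and Lemma \ref{lem4-3a} rather than proved separately. Without this braid-relation trick, or some substitute for it, your argument has a genuine gap at its central step.
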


\begin{proof}
It suffices to show that for $j, k$ with $|j-k|>1$ and $|j-k+1|>1$, 
\begin{equation}\label{eq4-1}
\sum_{i=1}^{N-1} h(\sigma_i, b_i)+h(\sigma_j, \sigma_k^\epsilon) \sim \sum_{i=1}^{N-1} h(\sigma_i, b_i)+h(\sigma_j, \sigma_{k-1}^\epsilon)
\end{equation}
 and for $j, k$ with $|j-k|>1$ and $|j-k-1|>1$,
\begin{equation}\label{eq4-2}
\sum_{i=1}^{N-1} h(\sigma_i, b_i)+h(\sigma_j, \sigma_k^\epsilon) \sim \sum_{i=1}^{N-1} h(\sigma_i, b_i)+h(\sigma_{j-1}, \sigma_k^\epsilon). 
\end{equation}
The relation (\ref{eq4-01}) follows from (\ref{eq4-1}) and (\ref{eq4-2}). 
Since $ h(\sigma_3, \sigma_1^{\epsilon}) \sim h(\sigma_1, \sigma_3^{-\epsilon})$ by Lemma \ref{lem4-3a}, (\ref{eq4-02}) follows from (\ref{eq4-1}) and (\ref{eq4-2}). 

We show (\ref{eq4-1}). 
Since $h(\sigma_{k-1}, b_{k-1}) \sim h(\sigma_{k-1},  \sigma_{j}^{-\epsilon} \sigma_{j}^{\epsilon}b_{k-1})$ and $h(\sigma_k, b_k) \sim h(\sigma_k, \sigma_{j}^{-\epsilon} \sigma_{j}^{\epsilon}b_k)$, by Lemma \ref{lem4-2} or Lemma \ref{lem3-8}, by sliding $h(\sigma_j, \sigma_k^\epsilon)$ along the cocores and collecting the crossings, $\sum_{i=1}^{N-1} h(\sigma_i, b_i)+h(\sigma_j, \sigma_k^\epsilon)$ deforms to 
\[
\sum_{i=1}^{N-1} h(\sigma_i, b_i)+h(\sigma_j, \sigma_k^\epsilon \sigma_{k-1}^\epsilon \sigma_k^\epsilon \sigma_{k-1}^{-\epsilon} \sigma_k^{-\epsilon}). 
\]
Since $\sigma_k^\epsilon \sigma_{k-1}^\epsilon \sigma_k^\epsilon \sigma_{k-1}^{-\epsilon} \sigma_k^{-\epsilon} \sim (\sigma_k^\epsilon \sigma_{k-1}^\epsilon \sigma_k^\epsilon \sigma_{k-1}^{-\epsilon} \sigma_k^{-\epsilon} \sigma_{k-1}^{-\epsilon}) \sigma_{k-1}^{\epsilon} \sim e \sigma_{k-1}^\epsilon \sim \sigma_{k-1}^\epsilon$, 
we have 
\[
\sum_{i=1}^{N-1} h(\sigma_i, b_i)+h(\sigma_j, \sigma_{k-1}^\epsilon).  
\]
Thus, we have (\ref{eq4-1}). The second relation (\ref{eq4-2}) is obtained by using Lemma \ref{lem4-3a} and (\ref{eq4-1}) as follows. By Lemma \ref{lem4-3a}, $\sum_{i=1}^{N-1} h(\sigma_i, b_i)+h(\sigma_j, \sigma_k^\epsilon) \sim \sum_{i=1}^{N-1} h(\sigma_i, b_i)+h(\sigma_k, \sigma_j^{-\epsilon})$. By (\ref{eq4-1}), this is equivalent to $\sum_{i=1}^{N-1} h(\sigma_i, b_i)+h(\sigma_k, \sigma_{j-1}^{-\epsilon})$. 
Since $h(\sigma_k, \sigma_{j-1}^{-\epsilon})\sim h(\sigma_{j-1}, \sigma_k^{\epsilon})$, we have (\ref{eq4-2}).  
\end{proof}

\begin{proof}[Proof of Theorem \ref{thm1-8}]
By Lemma \ref{lem3-8}, $h(\sigma_1, \sigma_3^{-1})+h(\sigma_1, \sigma_3)$ is equivalent to $h(\sigma_1, \sigma_3^{-1} \sigma_3)+h(e, \sigma_3)$, which is equivalent to $h(\sigma_1, e)+h(e, \sigma_3)$. These deform to $h(e,e)+h(e,e)$ by an addition of $\sum_{i=1}^{N-1} h(\sigma_i, e)$. Hence, by this argument and 
Theorem \ref{thm1-2} and Lemma \ref{lem4-5}, by an addition of finitely many 1-handles in the form $h(\sigma_i, e)$, or $h(e,e)$ $(i \in \{1, \ldots,N-1\})$, to appropriate places in $F$, $(F, \Gamma)$ deforms to  
\begin{equation*} 
(F, \Gamma_0) +\sum_{i=1}^{N-1}h(\sigma_i, e)+\sum h(\sigma_1, \sigma_3^\epsilon) +\sum h(e,e),
\end{equation*}
where $\epsilon$ is either $+1$ or $-1$. 
\end{proof}

\section{To more simplified forms II}\label{sec5}

In this section, we show Theorem \ref{thm1-9} and Corollaries \ref{thm1-11}, \ref{thm1-10}. Let $N$ be the degree of branched covering surface-knots. 

\begin{lemma}\label{lem4-1}
Together with $\sum_{i=1}^{N-1} h(\sigma_i, b_i)$, a 1-handle $h(a,b)$ moves anywhere, where $b_i, a, b$ are braids. 
\end{lemma}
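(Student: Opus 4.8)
The plan is to show that a 1-handle $h(a,b)$ attached at a 2-disk $E$ (where also $\sum_{i=1}^{N-1}h(\sigma_i,b_i)$ sits) can have each of its two ends slid, one at a time, to any prescribed 2-disk in $F$, while the full collection $\sum_{i=1}^{N-1}h(\sigma_i,b_i)$ is left unchanged. The key point is that the obstruction to sliding an end of $h(a,b)$ along a path $\rho$ in $F$ is exactly the chart-loop data that gets dragged along and wrapped around the cocore: crossing a chart edge with label $i$ along $\rho$ conjugates/multiplies the cocore braid by $\sigma_i^{\pm 1}$. So after moving an end along $\rho$ reading a braid $w(\rho)$, the 1-handle becomes, up to C-move equivalence, $h(a',b')$ with $b'$ differing from $b$ by a word determined by $w(\rho)$, and we must undo that change. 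This is precisely the kind of bookkeeping handled by Lemmas \ref{lem3-4}, \ref{lem3-7} and \ref{lem3-8}.

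First I would fix target 2-disks $E_1,E_2$ for the two ends of $h=h(a,b)$, and choose oriented paths $\gamma_1,\gamma_2$ in $F$ from $E$ to $E_i$ meeting $\Gamma$ (and the charts of all the auxiliary 1-handles) transversely in chart edges only; let $c_i$ be the braid read along $\gamma_i$. Slide the first end of $h$ along $\gamma_1$: by the mechanism in the proof of Lemma \ref{lem3-5} (applying CI-M2 moves / collecting crossings as in Lemma \ref{lem3-8}), the surface-knot deforms so that the 1-handle acquires chart loops around its cocore recording $c_1$, i.e.\ $h$ becomes equivalent to $h(c_1 a c_1^{-1},\,c_1 b)$ or a similar conjugate — the precise form is irrelevant because the next step will correct it. Slide the second end along $\gamma_2$ similarly, so that the cocore now reads a word built from $c_1$ and $c_2$ together with $b$; call the resulting 1-handle $h(a^\sharp, b^\sharp)$, attached with its ends on $E_1$ and $E_2$.

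Now the correction step: using $\sum_{i=1}^{N-1}h(\sigma_i,b_i)$, I would eliminate the unwanted chart loops around the cocore and restore the label-data. Since $\sum_{i=1}^{N-1}h(\sigma_i,b_i)$ covers every label $1,\dots,N-1$, the argument of Lemma \ref{lem3-4} (moving an end of our 1-handle between the ends of $h(\sigma_i,b_i)$ and applying a CI-move, then re-orienting as needed) lets us multiply the cocore braid by an arbitrary $\sigma_i^{\pm1}$ while leaving every $h(\sigma_i,b_i)$ unchanged; iterating, we can right-multiply $b^\sharp$ by any braid, hence transform $h(a^\sharp,b^\sharp)$ back to $h(a,b)$ now positioned with ends on $E_1, E_2$. (The core braid $a$ is unaffected by these slides since sliding an \emph{end} drags loops around the cocore, not along the core loop; and $a$, $b$ still commute throughout because at each stage we only conjugate or multiply by powers of a single $\sigma_i$, preserving well-definedness.) Applying the whole procedure first to one end and then to the other gives the claim. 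The main obstacle is the first paragraph's bookkeeping — verifying that sliding an end across a crossing or a white vertex of $\Gamma$ really does change the cocore braid only by the expected $\sigma_i^{\pm 1}$ factors (and in particular that white vertices and crossings of $\Gamma$ can be absorbed or passed using the moves of Section \ref{sec3}), since everything else reduces to the already-established Lemmas \ref{lem3-4}, \ref{lem3-7}, \ref{lem3-8}.
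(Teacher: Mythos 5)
There is a genuine gap here, and it is precisely the point the paper's one-line proof is designed to sidestep. You move the ends of $h(a,b)$ one at a time, as in Lemma \ref{lem3-5}. That works in Lemmas \ref{lem3-4} and \ref{lem3-5} because the 1-handle being moved there has trivial cocore braid (it is $h(e,\cdot)$): dragging a foot across a chart edge of label $i$ wraps a new loop of label $i$ around that foot, i.e. a loop parallel to the cocore, and when the cocore braid is trivial there is nothing on the tube for this new loop to collide with. For a general $h(a,b)$, the braid $a$ is presented by chart loops running along the core loop, and each such loop meets the cocore --- hence meets any loop parallel to the cocore around a single foot --- in exactly one essential intersection point. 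So the new label-$i$ loop created by sliding one foot must cross each label-$j$ loop of $a$ once, and such an intersection is a legitimate chart crossing only when $|i-j|>1$. If $a$ contains a letter $\sigma_j$ with $|i-j|\le 1$ and your path crosses an edge of label $i$, the slide cannot be performed within chart calculus. This is also why your two claims --- that $h$ becomes $h(c_1ac_1^{-1},c_1b)$ and that ``$a$ is unaffected since sliding an end drags loops around the cocore'' --- are in tension, and why the commutativity needed for $h(a',b')$ to be well defined after right-multiplying the core-loop braid is not automatic.

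The paper's proof instead moves the \emph{whole} 1-handle at once (Fig. \ref{fig10}): a chart edge that the handle passes through ends up as a loop encircling both feet, lying entirely outside the handle's own chart, and such an encircling loop of label $i$ is created or absorbed by threading the entire handle through the bridge of $h(\sigma_i,b_i)$, in the same way an end is threaded in Lemmas \ref{lem3-4} and \ref{lem3-5}. This keeps $a$ and $b$ untouched throughout. Your correction step using $\sum_{i=1}^{N-1}h(\sigma_i,b_i)$ is in the right spirit, but it must be applied to loops surrounding the whole handle, not to meridians of a single foot.
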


\begin{proof}
By moving the whole 1-handle instead of an end of the 1-handle as in the proofs of Lemmas \ref{lem3-4} and \ref{lem3-5}, we have the required result. See Fig. \ref{fig10}. 
\end{proof}

\begin{figure}[ht]
\centering
\includegraphics*[width=13cm]{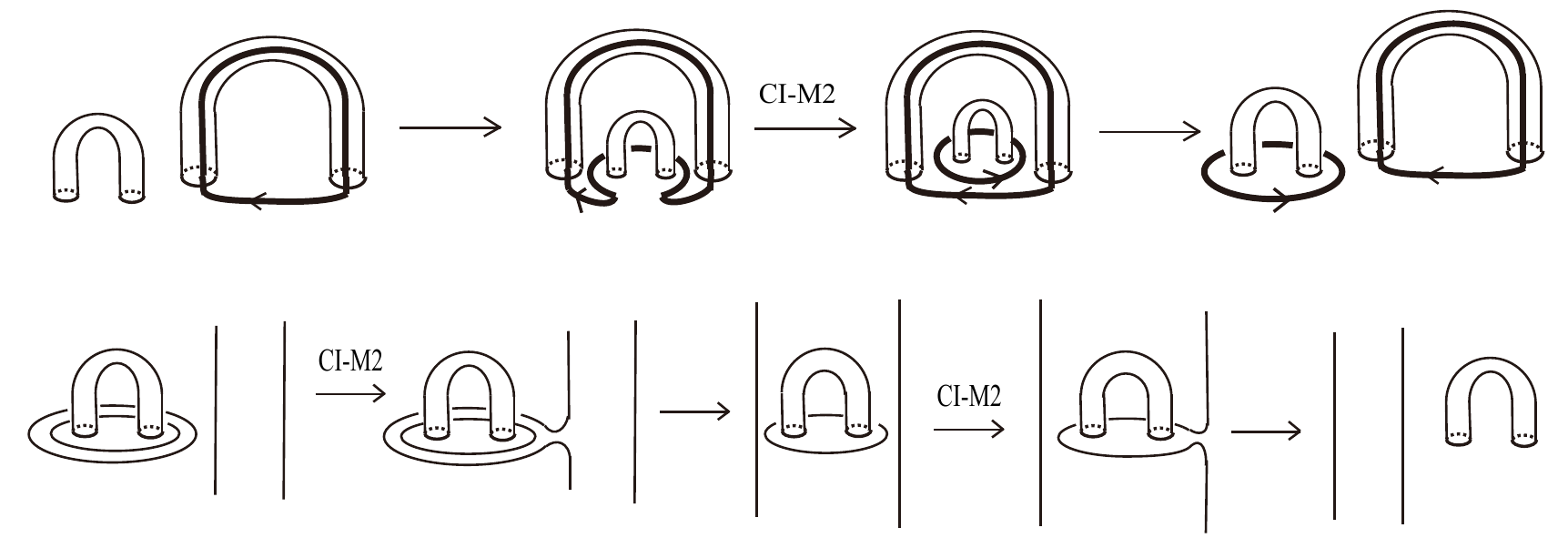}
\caption{Moving a 1-handle to make a chart loop surrounding it (upper figures), and moving a 1-handle through chart edges (lower figures). The orientations of chart loops are an example. For simplicity, we omit labels and orientations of some chart edges. }
\label{fig10}
\end{figure}
 
\begin{lemma}\label{lem4-2}
We have 
\begin{equation*}
\sum_{i \neq j} h(\sigma_i, b_i)+h(\sigma_j, b \sigma_k^\epsilon b')+h(\sigma_k, c) \sim \sum_{i \neq j} h(\sigma_i, b_i)+h(\sigma_j, b b')+h(\sigma_k, c\sigma_j^{-\epsilon}), 
\end{equation*}
where $|j-k|>1$, $\epsilon \in \{+1, -1\}$, and $b_i, b, b', c$ are braids such that the 1-handles have only crossings as vertices. 
\end{lemma}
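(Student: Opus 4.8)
The plan is to reduce the statement to relation \eqref{eq3-12a} of Lemma \ref{lem3-8}: there a single letter $\sigma_i^\epsilon$ appearing on the core loop of one $1$-handle is transferred, with an inversion of sign, onto the core word of another $1$-handle; the only new feature here is that the letter $\sigma_k^\epsilon$ sits in the \emph{middle} of the core word $b\sigma_k^\epsilon b'$ rather than being the whole of it, so one must first peel it loose from the conjugating factors $b$ and $b'$.

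Write $h_1=h(\sigma_j, b\sigma_k^\epsilon b')$ and $h_2=h(\sigma_k, c)$. The chart of $h_1$ consists of a label-$j$ chart loop running along its cocore together with a chart loop along its core loop whose word, read with the reversed orientation, is $b\sigma_k^\epsilon b'$; in this second loop there is exactly one label-$k$ edge $e$, namely the one produced by the middle letter. First I would use Lemma \ref{lem4-1} --- legitimate since the full system $\sum_{i=1}^{N-1}h(\sigma_i,\cdot)$ is present, with $h_1$ playing the role of the $j$-th handle --- to move $h_1$ next to $h_2$ and, sliding the end of $h_1$ nearest to $e$, to arrange that $e$ is adjacent to the base point of $h_1$ (where its cocore loop meets its core loop) and that locally around $e$ the core loop of $h_1$ reads exactly the single letter $\sigma_k^\epsilon$, the portions reading $b$ and $b'$ having been pushed away along the loop. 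This is where the hypotheses are used: since $|j-k|>1$, the label-$k$ edge $e$ can be slid across every edge it meets during this relocation by CII-moves --- except possibly label-$(k\pm1)$ edges, which are moved aside beforehand --- and since the $1$-handles have only crossings for vertices, no white or black vertex obstructs these slides; the $1$-handles $h(\sigma_i,b_i)$ with $i\neq j$ are left untouched.

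With $e$ so positioned, I would run the mechanism from the proof of Lemma \ref{lem3-8} verbatim: a CI-M2 move creates a bigon involving $e$ and the chart loop of $h_2$ surrounding its cocore, and one then slides an end of $h_2$ once around the cocore loop of $h_1$ along this bigon, collecting every crossing met; because the cocore of $h_1$ reads $\sigma_j$, the end of $h_2$ picks up the letter $\sigma_j^{-\epsilon}$ on the core loop of $h_2$ --- the sign $-\epsilon$ and the fact that it is appended on the right of $c$ being read off from the orientation of the bigon, which is fixed by $\epsilon$, using Lemma \ref{lem3-1} if an intermediate reversal of a chart loop is convenient --- while the edge $e$ disappears from the core loop of $h_1$. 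Thus $h_1$ becomes $h(\sigma_j, bb')$ and $h_2$ becomes $h(\sigma_k, c\sigma_j^{-\epsilon})$, and undoing the isotopies of the previous step yields the asserted equivalence; running the same moves backwards gives the reverse implication. The main obstacle is the peeling step: one must genuinely decouple the middle letter $\sigma_k^\epsilon$ from $b$, $b'$ and from the other $1$-handles before the Lemma \ref{lem3-8} move applies cleanly, and this is exactly what $|j-k|>1$ together with ``only crossings for vertices'' is there to guarantee; a minor further care is needed to keep the orientation conventions straight so that the outcome is $c\sigma_j^{-\epsilon}$ and not, say, $\sigma_j^{\epsilon}c$.
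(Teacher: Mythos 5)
Your overall mechanism is the right one and matches the paper's: the crossing formed by the label-$k$ loop $\rho$ (presenting the middle letter $\sigma_k^\epsilon$) and the label-$j$ loop along the core of $h(\sigma_j,b\sigma_k^\epsilon b')$ is collected onto $h(\sigma_k,c)$ by the CI-M2--and--slide move of Lemma \ref{lem3-8}, turning the pair into $h(\sigma_j,bb')+h(\sigma_k,c\sigma_j^{-\epsilon})$. The problem is the preparatory ``peeling'' step, which is where your argument has a genuine gap. The loops parallel to the cocore of $h(\sigma_j,b\sigma_k^\epsilon b')$ present the word $b\sigma_k^\epsilon b'$ read from the fixed base point (the intersection of the core loop and the cocore), and the letters of $b,b'$ carry arbitrary labels $l$. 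Exchanging the position of the $\sigma_k^\epsilon$ loop with a neighbouring label-$l$ loop changes the word read by the core loop, and this is an equivalence only when $\sigma_k$ and $\sigma_l$ commute; for $|k-l|\le 1$ there is no C-move that realizes it (CII-moves transport a black vertex across an edge, and there are no black vertices here), and ``moved aside beforehand'' is not a mechanism. The hypothesis $|j-k|>1$ only legitimizes the crossing of $\rho$ with the label-$j$ core loop; it says nothing about the letters of $b$ and $b'$. So in general you cannot arrange that the core loop reads the single letter $\sigma_k^\epsilon$ near the base point, i.e.\ you cannot reduce to the literal statement of (\ref{eq3-12a}).

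The paper avoids this by moving the handle to the loop rather than the loop to the handle: by Lemma \ref{lem4-1} (whole-handle transport, which is why the full system $\sum_{i}h(\sigma_i,\cdot)$ must be present) the handle $h(\sigma_k,c)$ is carried to a neighbourhood of $\rho$ \emph{in the middle of the word}, the elimination of $\rho$ and collection of the crossing is performed there exactly as in Lemma \ref{lem3-8}, and then $h(\sigma_k,c\sigma_j^{-\epsilon})$ is carried back to the attaching disk. On the way back it passes through the loops presenting $b$ and becomes surrounded by chart loops reading $b$; these are then erased by Lemma \ref{lem3-7} using the remaining handles $\sum_{i\neq j}h(\sigma_i,b_i)+h(\sigma_j,bb')$. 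This final clean-up is the second place where the full system of $1$-handles is genuinely used, and it is missing from your argument; replacing your peeling step by this transport-and-clean-up argument repairs the proof.
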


\begin{proof}
Assume that we have $\sum_{i \neq j} h(\sigma_i, b_i)+h(\sigma_j, b \sigma_k^\epsilon b')+h(\sigma_k, c)$. 
By Lemma \ref{lem4-1}, we move $h=h(\sigma_k,c)$ to a neighborhood $E$ of the chart loop $\rho$ with the label $k$ parallel to the cocore of $h'=h(\sigma_j, b \sigma_k^\epsilon b')$ presenting $\sigma_k$ of $b \sigma_k^\epsilon b'$. Then, by Lemma \ref{lem3-8}, by applying a CI-M2 move and sliding an end of $h$ along $\rho$, we eliminate $\rho$ to collect on $h$ the crossing formed by $\rho$ and the chart loop along the core loop of $h'$ with the label $j$. The 1-handle $h$ becomes $h(\sigma_k, c\sigma_j^{-\epsilon})$ attached to $E$, and $h'$ becomes $h(\sigma_j, bb')$. By CI-M2 moves, when $h$ comes back to the original place, it is surrounded by chart loops presenting $b$. We eliminate these loops by Lemma \ref{lem3-7}, by using the other 1-handles $\sum_{i \neq j} h(\sigma_i, b_i)+h(\sigma_j, bb')$. Then, we have $\sum_{i \neq j} h(\sigma_i, b_i)+h(\sigma_j, b b')+h(\sigma_k, c\sigma_j^{-\epsilon})$, which is the required result. 
\end{proof}

\begin{lemma}\label{lem4-3}
We have 
\[
\sum_{i\neq j} h(\sigma_i, b_i)+h(\sigma_j, b \sigma_k^\epsilon b')+h(\sigma_j, c) \sim \sum_{i \neq j} h(\sigma_i, b_i)+h(\sigma_j, b b')+h(\sigma_j, c\sigma_k^{\epsilon}), 
\]
where $|j-k|>1$, $b_i, b, b', c$ are braids such that the 1-handles have only crossings, and $\epsilon \in \{+1, -1\}$. 
\end{lemma}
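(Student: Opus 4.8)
The plan is to regard this as the analogue of Lemma~\ref{lem4-2} for two $1$-handles that share the same cocore label $j$, and to deduce it from two applications of Lemma~\ref{lem4-2} in which the $1$-handle $h(\sigma_k,b_k)$ --- which occurs in $\sum_{i\neq j}h(\sigma_i,b_i)$ because $k\neq j$ --- serves as a relay. As in the proof of Lemma~\ref{lem4-2}, the chart of $h(\sigma_j,b\sigma_k^\epsilon b')$ has, parallel to its cocore, a chart loop $\rho$ with label $k$ carrying a single crossing and presenting the displayed $\sigma_k^\epsilon$; this is all we need from the hypothesis that the $1$-handles have only crossings.

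The first step is to transfer $\sigma_k^\epsilon$ from $h(\sigma_j,b\sigma_k^\epsilon b')$ onto the relay handle, imitating the proof of Lemma~\ref{lem4-2}: by Lemma~\ref{lem4-1} move an end of $h(\sigma_k,b_k)$ to a neighborhood of $\rho$, by Lemma~\ref{lem3-8} slide it along $\rho$ to collect the crossing of $\rho$, and by Lemma~\ref{lem3-7} remove the chart loops presenting $b$ that surround the returned end. This turns $h(\sigma_j,b\sigma_k^\epsilon b')$ into $h(\sigma_j,bb')$ and $h(\sigma_k,b_k)$ into $h(\sigma_k,b_k\sigma_j^{-\epsilon})$, giving
\[
\sum_{i\neq j,k}h(\sigma_i,b_i)+h(\sigma_k,b_k\sigma_j^{-\epsilon})+h(\sigma_j,bb')+h(\sigma_j,c).
\]
For the second step, apply Lemma~\ref{lem4-2} with the names $j$ and $k$ interchanged (the lemma is symmetric in $j,k$), viewing $h(\sigma_k,b_k\sigma_j^{-\epsilon})$ as $h(\sigma_k,b\,\sigma_j^{-\epsilon}\,b')$ with $b=b_k$, $b'=e$: this moves $\sigma_j^{-\epsilon}$ off the relay handle and deposits $\sigma_k^\epsilon$ at the end of the core loop of $h(\sigma_j,c)$, returning the relay handle to $h(\sigma_k,b_k)$, turning $h(\sigma_j,c)$ into $h(\sigma_j,c\sigma_k^\epsilon)$, and leaving $h(\sigma_j,bb')$ unchanged. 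Here the auxiliary sum needed by the interchanged Lemma~\ref{lem4-2} is $\sum_{i\neq j,k}h(\sigma_i,b_i)+h(\sigma_j,bb')$, which covers every label $\neq k$, so it is complete. Composing the two steps and recombining $\sum_{i\neq j,k}h(\sigma_i,b_i)+h(\sigma_k,b_k)=\sum_{i\neq j}h(\sigma_i,b_i)$ gives the asserted equivalence; the transient factor $\sigma_j^{-\epsilon}$ on the relay handle cancels between the two steps, as do the analogous factors in the proofs of Lemmas~\ref{lem3-8} and \ref{lem4-3a}.

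The point that will need the most care is the first step, where Lemma~\ref{lem4-2} is applied with its auxiliary sum $\sum_{i\neq j}h(\sigma_i,b_i)$ replaced by $\sum_{i\neq j,k}h(\sigma_i,b_i)$ --- that is, with the label-$k$ handle absent from the sum, since it is the very handle being slid. In the proof of Lemma~\ref{lem4-2} the label-$k$ handle of the sum was what removed any label-$k$ loops produced in the cleanup; here, since $b$ may involve $\sigma_k$, such loops can still occur, now surrounding the relay handle after it returns. I would dispose of each such loop by sliding an end of the relay handle $h(\sigma_k,b_k\sigma_j^{-\epsilon})$ outward across it by a CI-M2 move: the loop and the cocore of the relay handle carry the same label $k$, so (as in Lemma~\ref{lem3-7}) this eliminates the loop and leaves the relay handle unchanged. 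The remaining cleanup loops, whose labels lie in $\{1,\dots,N-1\}\setminus\{k\}$, are removed by Lemma~\ref{lem3-7} using $\sum_{i\neq j,k}h(\sigma_i,b_i)$ together with $h(\sigma_j,bb')$ and $h(\sigma_j,c)$, exactly as in Lemma~\ref{lem4-2}; and, as there, the bookkeeping of $b,b',c$ is independent of the path along which the relay handle travels because every intermediate loop is eventually killed.
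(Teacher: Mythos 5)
Your overall strategy---using the label-$k$ handle $h(\sigma_k,b_k)$ already present in $\sum_{i\neq j}h(\sigma_i,b_i)$ as a relay, via two applications of Lemma \ref{lem4-2}---is genuinely different from the paper's proof, and it founders exactly at the point you flagged. When the relay $h(\sigma_k,b_k)$ travels to the loop $\rho$ presenting $\sigma_k^\epsilon$ and back, it must pass the loops presenting $b$, and $b$ may contain $\sigma_k^{\pm1}$ (the only constraint on the labels occurring in $b,b'$ is that they differ from $j$ by more than $1$, which $k$ itself satisfies). Your patch---removing a label-$k$ loop encircling an end of the relay by a CI-M2 move with the relay's own label-$k$ edge, ``leaving the relay handle unchanged''---is not justified by Lemma \ref{lem3-7} and is not correct: a label-$k$ loop encircling an end of $h(\sigma_k,c)$ is read by the core loop and hence is part of the core monodromy, so absorbing it via the handle's own cocore-label edge turns $h(\sigma_k,c)$ into $h(\sigma_k,\sigma_k^{\pm1}c)$ (the framing-type twist), not back into $h(\sigma_k,c)$. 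Lemma \ref{lem3-7} removes a loop using a handle \emph{disjoint} from it, by sliding an end all the way around the loop and back; that argument does not apply to a loop that encircles the sliding handle's own end. If the exponent sum of $\sigma_k$ in $b$ is nonzero, your relay returns as $h(\sigma_k,b_k\sigma_k^m)$ with $m\neq 0$, and the claimed final configuration is not reached.

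The paper avoids ever sliding the unique label-$k$ handle of the sum. It first fuses the two label-$j$ handles by Lemma \ref{lem3-8}, obtaining $h(\sigma_j,b\sigma_k^\epsilon b'c)+h(e,c)$, cleans the second to $h(e,e)$, and then converts that $h(e,e)$ into a \emph{fresh} $h(\sigma_k,e)$ by the inverse of Lemma \ref{lem3-7} applied with $h(\sigma_k,b_k)$ (which is left intact). This fresh handle transports $\sigma_k^\epsilon$ out of $b\sigma_k^\epsilon b'c$ and re-deposits it at the end of the word, and every label-$k$ cleanup loop it acquires along the way can be killed using the untouched $h(\sigma_k,b_k)$ exactly as in relation (\ref{eq3-10}); finally the inverse of Lemma \ref{lem3-8} splits $h(\sigma_j,bb'c\sigma_k^\epsilon)+h(e,c\sigma_k^\epsilon)$ back into $h(\sigma_j,bb')+h(\sigma_j,c\sigma_k^\epsilon)$. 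The fuse/un-fuse of the two label-$j$ handles is the step your argument is missing: it is what guarantees a spare $h(e,e)$ from which a second, expendable label-$k$ handle can be manufactured, so the relay and the cleanup tool are never the same handle.
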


\begin{proof}
Assume that we have $\sum_{i\neq j} h(\sigma_i, b_i)+h(\sigma_j, b \sigma_k^\epsilon b')+h(\sigma_j, c)$. 
By Lemma \ref{lem3-8}, by applying a CI-M2 move to the edges with the label $j$ of $h=h(\sigma_j, b \sigma_k^\epsilon b')$ and $h'=h(\sigma_j, c)$, and sliding an end of $h$ along $h'$, $h$ becomes $h(\sigma_j, b \sigma_k^\epsilon b'c)$ and   $h'$ becomes $h(e, c)$. We have 
\[
\sum_{i\neq j} h(\sigma_i, b_i)+h(\sigma_j, b \sigma_k^\epsilon b'c)+h(e,c).
\]
By using $\sum_{i\neq j} h(\sigma_i, b_i)+h(\sigma_j, b \sigma_k^\epsilon b'c)$, by Lemma \ref{lem3-8}, we eliminate all chart loops parallel to the cocore of $h(e,c)$, and $h(e,c)$ becomes $h(e,e)$. We have 
\[
\sum_{i\neq j} h(\sigma_i, b_i)+h(\sigma_j, b \sigma_k^\epsilon b'c)+h(e,e).
\]
By applying the inverse process of Lemma \ref{lem3-7} to $h(\sigma_k, b_k)$ and $h(e,e)$, $h(e,e)$ becomes $h(\sigma_k, e)$. We have 
\[
\sum_{i\neq j} h(\sigma_i, b_i)+h(\sigma_j, b \sigma_k^\epsilon b'c)+h(\sigma_k,e).
\]
By Lemma \ref{lem4-1}, we move $h(\sigma_k,e)$ to a neighborhood of the chart loop $\rho$ with the label $k$ parallel to the cocore of $h'=h(\sigma_j, b \sigma_k^\epsilon b'c)$ presenting $\sigma_k^\epsilon$ of $b \sigma_k^\epsilon b'c$. By Lemma \ref{lem3-8}, we eliminate $\rho$ to collect on $h(\sigma_k, e)$ the crossing formed by $\rho$ and the chart loop along the core loop of $h'$ with the label $j$. We have 
\[
\sum_{i\neq j} h(\sigma_i, b_i)+h(\sigma_j, b b'c)+h(\sigma_k, \sigma_j^{-\epsilon}).
\]
By Lemma \ref{lem3-8}, by applying a CI-M2 move and sliding an end of $h(\sigma_j, b b'c)$ around the cocore of $h(\sigma_k, \sigma_j^{-\epsilon})$, we have 
\[
\sum_{i\neq j} h(\sigma_i, b_i)+h(\sigma_j, bb'c\sigma_k^{\epsilon})+h(\sigma_k, e). 
\]
By applying Lemma \ref{lem3-7} to $h(\sigma_k, b_k)$ and $h(\sigma_k,e)$, we have 
\[
\sum_{i\neq j} h(\sigma_i, b_i)+h(\sigma_j, bb'c\sigma_k^{\epsilon})+h(e, e). 
\]
By the inverse processes of Lemma \ref{lem3-7} to $\sum_{i\neq j} h(\sigma_i, b_i)+h(\sigma_j, bb'c\sigma_k^{\epsilon})$ and 
$h(e,e)$, $h(e,e)$ becomes $h(e, c\sigma_k^{\epsilon})$.  
We have 
\[
\sum_{i\neq j} h(\sigma_i, b_i)+h(\sigma_j, bb'c\sigma_k^{\epsilon})+h(e, c\sigma_k^{\epsilon}). 
\]
By the inverse process of Lemma \ref{lem3-8} to $h=h(\sigma_j, bb'c\sigma_k^{\epsilon})$ and $h'=h(e,c\sigma_k^{\epsilon})$, 
$h$ becomes $h(\sigma_j, bb')$ and $h'$ becomes $h(\sigma_j, c\sigma_k^{\epsilon})$, and we have 
\[
\sum_{i\neq j} h(\sigma_i, b_i)+h(\sigma_j, bb')+h(\sigma_j, c\sigma_k^{\epsilon}), 
\]
which is the required result.
\end{proof}

For a crossing $c$ of a chart $\Gamma$, 
let $s(c)=+1$ (resp. $-1$) if $c$ is of type $c_{i,j}$ for $i<j$ (resp. $i>j$), where $|i-j|>1$, $i,j \in \{1, \ldots, N-1\}$. 
Let $s(\Gamma)$ be the sum of $s(c)$ for crossings of $\Gamma$. 

\begin{proposition}\label{prop1-9}
Let $(F, \Gamma)$ be a branched covering surface-knot of degree $N$ such that $w(\Gamma)=0$, where $w(\Gamma)$ is the number of white vertices. 
Put $s=s(\Gamma)$. Assume that $F=F_0+h$ for a surface-knot $F_0$ and a 1-handle $h$. Then, by an addition of $\sum_{i=1}^{N-1} h(\sigma_i, e)$,  
$(F, \Gamma)$ deforms to 
\begin{equation}\label{eq-prop}
(F_0, \Gamma_0)+h(\sigma_1, \sigma_3^{-s})+\sum_{i=2}^{N-1} h(\sigma_i, e)+ \sum_{j=1}^n h(e,e), 
\end{equation}
where $\Gamma_0$ is a chart consisting of several free edges, and $n \geq 1$. Further, by an addition of $h(\sigma_1, \sigma_3^{-1})+\sum_{i=2}^{N-1} h(\sigma_i, e)$,  
$(F, \Gamma)$ deforms to 
\begin{equation}\label{eq-prop2}
(F_0, \Gamma_0)+h(\sigma_1, \sigma_3^{-s-1})+\sum_{i=2}^{N-1} h(\sigma_i, e)+ \sum_{j=1}^n h(e,e),  
\end{equation}
where $\Gamma_0$ is a chart consisting of several free edges, and $n \geq 1$. 
\end{proposition}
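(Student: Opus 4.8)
The plan is to reduce Proposition \ref{prop1-9} to the results already established, especially Theorem \ref{thm1-2}, Lemma \ref{lem4-5}, and the $h$-calculus Lemmas \ref{lem3-7}, \ref{lem3-8}, \ref{lem4-2}, and \ref{lem4-3}. Since $w(\Gamma)=0$, the chart $\Gamma$ has no white vertices and, under Assumption \ref{assump1}, no black vertices either, so $\Gamma$ consists of a disjoint union of chart loops whose only vertices are crossings. By Theorem \ref{thm1-2} (or directly by Lemmas \ref{lem3-7} and \ref{lem3-8} using the bridge $h$ coming from the hypothesis $F=F_0+h$, together with an addition of $\sum_{i=1}^{N-1}h(\sigma_i,e)$), every chart loop can be eliminated at the cost of collecting its crossings onto 1-handles. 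The outcome is a presentation of the form $(F_0,\Gamma_0)+\sum_i h(\sigma_i,e)+\sum_\ell h(\sigma_{i_\ell},\sigma_{j_\ell}^{\epsilon_\ell})+\sum h(e,e)$ with $|i_\ell-j_\ell|>1$ and $\Gamma_0$ a union of free edges; here I must be careful to keep track of the fact that, since $w(\Gamma)=0$, no new white vertices are ever created, so no copies of $h(\sigma_i,\sigma_j^{\pm1})$ beyond those recording crossings appear, and the number $n$ of $h(e,e)$ handles is at least $1$ because the bridge handle $h$ survives as an $h(e,e)$ after Lemma \ref{lem3-5}.

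Next I would normalize the collection $\sum_\ell h(\sigma_{i_\ell},\sigma_{j_\ell}^{\epsilon_\ell})$. By Lemma \ref{lem4-5}, in the presence of $\sum_{i=1}^{N-1}h(\sigma_i,b_i)$ (which we have, taking $b_i=e$), each $h(\sigma_{i_\ell},\sigma_{j_\ell}^{\epsilon_\ell})$ with $j_\ell>k_\ell$, $|j_\ell-k_\ell|>1$ is equivalent to $h(\sigma_1,\sigma_3^{\epsilon'_\ell})$ for an appropriate sign $\epsilon'_\ell\in\{+1,-1\}$; the sign bookkeeping is exactly the definition of $s(c)$, so the list of signs $\epsilon'_\ell$, counted with multiplicity, sums to $-s(\Gamma)$ — the minus sign reflecting that the orientation-reversed core loop is what reads the braid, matching the conventions used in the proofs of Lemmas \ref{lem3-7}, \ref{lem3-8}. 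Then I would use Lemma \ref{lem4-3} (or repeatedly Lemma \ref{lem3-8}) to fuse several $h(\sigma_1,\sigma_3^{\pm1})$ into a single $h(\sigma_1,\sigma_3^{m})$: combining $h(\sigma_1,\sigma_3^{a})+h(\sigma_1,\sigma_3^{b})$ yields $h(\sigma_1,\sigma_3^{a+b})+h(\sigma_1,e)$, and $h(\sigma_1,e)$ together with $\sum_{i}h(\sigma_i,e)$ absorbs into an $h(e,e)$ via (the proof of) Theorem \ref{thm1-8} (the identity $h(\sigma_1,\sigma_3^{-1})+h(\sigma_1,\sigma_3)\sim h(e,e)+h(e,e)$ is exactly the device used there). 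Iterating, $\sum_\ell h(\sigma_1,\sigma_3^{\epsilon'_\ell})$ collapses to a single $h(\sigma_1,\sigma_3^{-s})$ plus extra $h(e,e)$ handles (and $-s$ should be reduced: if $|{-s}|$ of these handles were created with mismatched signs they cancel in pairs, leaving $h(\sigma_1,\sigma_3^{-s})$), giving precisely the form (\ref{eq-prop}).

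For the second statement (\ref{eq-prop2}), the input is that we additionally add $h(\sigma_1,\sigma_3^{-1})+\sum_{i=2}^{N-1}h(\sigma_i,e)$; note that $h(\sigma_1,e)$ is already available from (\ref{eq-prop})'s derivation or can be produced, so $\sum_{i=1}^{N-1}h(\sigma_i,e)$ is present as before, and the extra handle is one more $h(\sigma_1,\sigma_3^{-1})$. Running the same fusion as above on $h(\sigma_1,\sigma_3^{-s})+h(\sigma_1,\sigma_3^{-1})$ via Lemma \ref{lem4-3} produces $h(\sigma_1,\sigma_3^{-s-1})+h(\sigma_1,e)$, and $h(\sigma_1,e)$ is absorbed into an $h(e,e)$ using $\sum_i h(\sigma_i,e)$; this yields (\ref{eq-prop2}) with $n\geq1$ (in fact $n$ increases by at least one compared to (\ref{eq-prop})).

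\textbf{Main obstacle.} I expect the delicate point to be the sign and orientation bookkeeping: tracking precisely how each crossing of type $c_{i,j}$ of $\Gamma$, after being collected onto a 1-handle and then normalized via Lemma \ref{lem4-5}, contributes a definite sign to the exponent of $\sigma_3$ in the final $h(\sigma_1,\sigma_3^{\pm1})$, and verifying that the algebraic sum is $-s(\Gamma)$ rather than $+s(\Gamma)$ or $s(\Gamma)$ shifted by a constant. This requires carefully following the conventions established in Section \ref{sec2-2} (the relation between the orientation of a chart loop, the braid it reads, and the reversed core loop reading $b$ in $h(a,b)$) and in the proof of Lemma \ref{lem3-8}; a sign error here would be invisible when $s=0$ but fatal in general. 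The rest — that the leftover handles are $\sum_{i=1}^{N-1}h(\sigma_i,e)$ together with some $h(e,e)$'s and that $n\geq1$ — follows routinely from Lemmas \ref{lem3-5}, \ref{lem3-7}, and the absorption identity of Theorem \ref{thm1-8}.
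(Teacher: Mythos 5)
Your overall strategy --- eliminate the chart loops, place each crossing on its own handle $h(\sigma_{i_\ell},\sigma_{j_\ell}^{\epsilon_\ell})$, normalize each to $h(\sigma_1,\sigma_3^{\pm1})$ by Lemma \ref{lem4-5}, and fuse --- has two genuine gaps. The first is a handle-budget problem: the proposition permits adding only the $N-1$ handles $\sum_{i=1}^{N-1}h(\sigma_i,e)$, but your intermediate form $\sum_i h(\sigma_i,e)+\sum_\ell h(\sigma_{i_\ell},\sigma_{j_\ell}^{\epsilon_\ell})+\sum h(e,e)$ is not what loop elimination produces. Lemmas \ref{lem3-7} and \ref{lem3-8} yield $\sum_{i=1}^{N-1}h(\sigma_i,b_i)$ with all the crossings accumulated in the words $b_i$ on those same $N-1$ handles; separating them onto individual single-crossing handles (the weak simplified form of Theorem \ref{thm1-2}) costs roughly one extra $h(e,e)$ per crossing. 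Since a 1-handle addition raises the genus irreversibly, this either violates the stated hypothesis \lq\lq by an addition of $\sum_{i=1}^{N-1}h(\sigma_i,e)$'' or proves a strictly weaker statement --- and the sharp handle count is precisely what Proposition \ref{prop1-9} is for, since it drives the bounds $u_w(F,\Gamma)\le N$ in Corollary \ref{thm1-11} and Corollary \ref{thm1-10}. The paper instead borrows a single $h(e,e)$ from the hypothesis $F=F_0+h$ and runs an inductive, label-by-label collection (first all crossings involving the label $N-1$, then $N-2$, and so on), reusing the same $N-1$ added handles throughout and converting crossing types on the fly via the braid relation $\sigma_k^{\epsilon}\sigma_{k-1}^{\epsilon}\sigma_k^{\epsilon}\sigma_{k-1}^{-\epsilon}\sigma_k^{-\epsilon}\sim\sigma_{k-1}^{\epsilon}$.

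The second gap is that the exponent $-s$ is the actual content of the proposition, and you only assert it (\lq\lq the list of signs sums to $-s(\Gamma)$'') before naming it as your unresolved main obstacle. The paper closes this point with an invariance argument: every move used (Lemmas \ref{lem3-7}, \ref{lem3-8}, \ref{lem4-2}, \ref{lem4-3} and the type conversions) invokes only the relations $\sigma_i\sigma_i^{-1}\sim e$ and $\sigma_i\sigma_j\sigma_i\sim\sigma_j\sigma_i\sigma_j$, under which the signed crossing count $s(\Gamma')$ is unchanged; combined with $s(h(\sigma_1,\sigma_3^{\epsilon}))=-\epsilon$, this forces the final exponent to equal $-s(\Gamma)$. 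Without some such argument your proof only shows the result is $h(\sigma_1,\sigma_3^{m})$ for some integer $m$. (A minor repairable point: the fusion identity should read $h(\sigma_1,\sigma_3^{a})+h(\sigma_1,\sigma_3^{b})\sim h(\sigma_1,\sigma_3^{a+b})+h(e,\sigma_3^{b})$ by (\ref{eq3-12}), with the residual handle cleared by (\ref{eq3-10}), not $+\,h(\sigma_1,e)$.)
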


\begin{remark}
In this paper, since $F$ is an unknotted surface-knot, $F=F_0+h$ if and only if $g(F)>0$, where $g(F)$ is the genus of $F$. We write the condition this way so that we can extend the statement to a general case for a knotted surface $F$.  
\end{remark}

\begin{proof}[Proof of Proposition \ref{prop1-9}]
We show (\ref{eq-prop}). We add $\sum_{i=1}^{N-1} h(\sigma_i, e)$ to $(F, \Gamma)$. Since $w(\Gamma)=0$, $\Gamma$ 
consists of chart loops with crossings and several free edges. By applying Lemmas \ref{lem3-7} and \ref{lem3-8}, by using $\sum_{i=1}^{N-1} h(\sigma_i, e)$, we eliminate chart loops.  
Then we have 
$\sum_{i=1}^{N-1} h(\sigma_i, b_i)$, where $b_i$ are braids such that there are only crossings for vertices on 1-handles, and more than zero copies of $h(e,e)$. 
From now on, we consider 
\[
\sum_{i=1}^{N-1} h(\sigma_i, b_i)+h(e,e). 
\]
By applying the inverse process of Lemma \ref{lem3-8} to $h(\sigma_{N-1}, b_{N-1})$ and $h(e,e)$, we have 
\[
\sum_{i=1}^{N-1} h(\sigma_i, b_i)+h(\sigma_{N-1},e). 
\]
By Lemmas \ref{lem4-2} and \ref{lem4-3}, we collect all crossings such that one of the two consisting edges has the label $N-1$: we collect crossings of type $c_{i, N-1}$ and $c_{N-1, i}$ ($i=1, \ldots, N-3$) on the last 1-handle  $h=h(\sigma_{N-1}, e)$, by the following method. We use a similar argument as in the proof of Lemma \ref{lem4-5} where we showed (\ref{eq4-1}). 
By Lemmas \ref{lem4-2} and \ref{lem4-3}, the order of collecting crossings can be chosen arbitrarily. 
First we collect on $h$ crossings of type $c_{1, N-1}$ and $c_{N-1, 1}$. Then we collect on $h$ crossings of type $c_{2, N-1}$ and $c_{N-1, 2}$. When we collect a crossing of type $c_{2, N-1}$ or $c_{N-1, 2}$, and $h$ becomes $h(\sigma_{N-1},  \sigma_1^m \sigma_2^\epsilon)$ for an integer $m$ and $\epsilon \in \{+1, -1\}$, then we deform the type of the crossing to $c_{1, N-1}$ or $c_{N-1, 1}$ as follows. We denote by $\sum_{i=1}^{N-1} h(\sigma_i, b_i')$ the 1-handles after we collected the crossing, where $b_i'$ $(i=1, \ldots, N-1)$ are braids: we have 
\[
\sum_{i=1}^{N-1} h(\sigma_i, b_i')+h(\sigma_{N-1}, \sigma_1^m \sigma_2^\epsilon),  
\]
where $m$ is an integer and $\epsilon \in \{+1, -1\}$. 
Since $h(\sigma_1, b_1') \sim h(\sigma_1,  \sigma_{N-1}^{-\epsilon} \sigma_{N-1}^{\epsilon}b_{1}')$ and $h(\sigma_2, b_2') \sim h(\sigma_2, \sigma_{N-1}^{-\epsilon} \sigma_{N-1}^{\epsilon}b_2')$, by Lemma \ref{lem4-2} or Lemma \ref{lem3-8}, by sliding $h$ along the cocores and collecting 4 crossings of type $\{ c_{N-1,1}, c_{N-1,2}, c_{1, N-1}, c_{2, N-1}\}$, $\sum_{i=1}^{N-1} h(\sigma_i, b_i')+h(\sigma_{N-1}, \sigma_1^m \sigma_2^\epsilon)$ deforms to 
\[
\sum_{i=1}^{N-1} h(\sigma_i, b_i')+h(\sigma_{N-1}, \sigma_1^m \sigma_2^\epsilon \sigma_{1}^\epsilon \sigma_2^\epsilon \sigma_1^{-\epsilon} \sigma_2^{-\epsilon}). 
\]
Since $\sigma_2^\epsilon \sigma_{1}^\epsilon \sigma_2^\epsilon \sigma_{1}^{-\epsilon} \sigma_2^{-\epsilon} \sim (\sigma_2^\epsilon \sigma_{1}^\epsilon \sigma_2^\epsilon \sigma_{1}^{-\epsilon} \sigma_2^{-\epsilon} \sigma_{1}^{-\epsilon}) \sigma_{1}^{\epsilon} \sim e \sigma_{1}^\epsilon \sim \sigma_{1}^\epsilon$, 
we have 
\[
\sum_{i=1}^{N-1} h(\sigma_i, b_i')+h(\sigma_{N-1}, \sigma_1^{m+\epsilon}). 
\]
Applying this process each time we collect a crossing of type $c_{2, N-1}$ or $c_{N-1, 2}$, we collect crossings of type $c_{2, N-1}$ and $c_{N-1, 2}$ on $h$ and moreover deform the types of the collected crossings to $c_{1, N-1}$ or $c_{N-1, 1}$. Then we collect crossings of type $c_{3, N-1}$ and $c_{N-1, 3}$ on $h$. When we collect such a crossing, and $h$ becomes $h(\sigma_{N-1},  \sigma_1^m \sigma_3^\epsilon)$ for an integer $m$ and $\epsilon \in \{+1, -1\}$, then we deform the type of the crossing to $c_{1, N-1}$ and $c_{N-1, 1}$ as follows. For simplicity, we use the same notation $\sum_{i=1}^{N-1} h(\sigma_i, b_i')$ to denote the 1-handles after we collected the crossing: we have 
\[
\sum_{i=1}^{N-1} h(\sigma_i, b_i')+h(\sigma_{N-1}, \sigma_1^m \sigma_3^\epsilon),  
\]
where $m$ is an integer and $\epsilon \in \{+1, -1\}$. 
Since $h(\sigma_2, b_2') \sim h(\sigma_2,  \sigma_{N-1}^{-\epsilon} \sigma_{N-1}^{\epsilon}b_{2}')$ and $h(\sigma_3, b_3') \sim h(\sigma_3,  \sigma_{N-1}^{-\epsilon} \sigma_{N-1}^{\epsilon}b_{3}')$, by Lemma \ref{lem4-2} or Lemma \ref{lem3-8}, by sliding $h$ along the cocores and collecting 4 crossings, $\sum_{i=1}^{N-1} h(\sigma_i, b_i')+h(\sigma_{N-1}, \sigma_1^m \sigma_3^\epsilon)$ deforms to 
\[
\sum_{i=1}^{N-1} h(\sigma_i, b_i')+h(\sigma_{N-1}, \sigma_1^m \sigma_3^\epsilon \sigma_{2}^\epsilon \sigma_3^\epsilon \sigma_2^{-\epsilon} \sigma_3^{-\epsilon}),  
\]
and hence
\[
\sum_{i=1}^{N-1} h(\sigma_i, b_i')+h(\sigma_{N-1}, \sigma_1^m \sigma_{2}^\epsilon). 
\]
Then, by the above argument, we deform this to 
 \[
\sum_{i=1}^{N-1} h(\sigma_i, b_i')+h(\sigma_{N-1}, \sigma_1^{m+\epsilon}). 
\]
By repeating similar processes and collecting crossings of type $c_{i, N-1}$ and $c_{N-1, i}$ ($i=1, \ldots, N-3$) and deforming the types to $c_{1, N-1}$ or $c_{N-1, 1}$ one by one, 
we have 
\[
\sum_{i=1}^{N-2} h(\sigma_i, b_i')+h(\sigma_{N-1},e)+h(\sigma_{N-1}, \sigma_1^m),  
\]
where $b_i'$ ($i=1, \ldots, N-2$) are braids which do not contain a chart loop with the label $N-1$, and $m$ is an integer. 
By Lemma \ref{lem3-7}, by using $h(\sigma_{N-1}, \sigma_1^m)$,  $h(\sigma_{N-1}, e)$ becomes $h(e,e)$, and we have 
\[
\sum_{i=1}^{N-2} h(\sigma_i, b_i')+h(\sigma_{N-1}, \sigma_1^m)+h(e,e).
\]
By the inverse process of Lemma \ref{lem3-7}, we have 
\[
\sum_{i=1}^{N-2} h(\sigma_i, b_i')+h(\sigma_{N-1}, \sigma_1^m)+h(\sigma_{N-2},e). 
\]
Since $\sum_{i=1}^{N-2} h(\sigma_i, b_i')+h(\sigma_{N-2},e)$ consists of charts with the labels in $\{1, \ldots, N-2\}$, we collect crossings of type $c_{i, N-2}$ and $c_{N-2, i}$ $(i=1, \ldots, N-4)$ on the last 1-handle $h(\sigma_{N-2},e)$ and moreover we deform the types of the collected crossings to $c_{1, N-2}$ or $c_{N-2, 1}$, and we have 
\[
\sum_{i=1}^{N-3} h(\sigma_i, b_i'')+h(\sigma_{N-2}, \sigma_1^n)+h(\sigma_{N-1}, \sigma_1^m)+h(e,e),  
\]
where $b_i''$ ($i=1, \ldots, N-3$) are braids which do not contain a chart loop with the label $N-2$ nor $N-1$, and $m, n$ are integers. 
By repeating this process, we have 
\[
h(\sigma_1, e)+h(\sigma_2, e)+
\sum_{i=3}^{N-1} h(\sigma_i, \sigma_1^{m_i})+h(e,e),  
\]
where $m_i$ ($i=3, \ldots, N-1$) are integers. 
By the inverse process of Lemma \ref{lem3-7}, to $h(\sigma_1, e)$ and $h(e,e)$, we have 
\[
h(\sigma_1, e)+h(\sigma_2, e)+
\sum_{i=3}^{N-1} h(\sigma_i, \sigma_1^{m_i})+h(\sigma_1,e).  
\]
By Lemma \ref{lem3-8}, we collect all crossings on the last 1-handle $h(\sigma_1, e)$. Since $h(\sigma_i, \sigma_1^{m_i}) \sim h(\sigma_i, \sigma_1^{-\epsilon} \sigma_1^\epsilon \sigma_1^{m_i})$ $(i=3, \ldots, N-1, \epsilon \in \{+1, -1\})$, by collecting 4 crossings of type $\{ c_{1, i-1}, c_{1, i}, c_{i-1,1}, c_{i, 1}\}$ $(i=4, \ldots, N-1)$ if necessary when we collect each crossing, we deform the types of the collected crossings to $c_{1,3}$ or $c_{3,1}$. 
Then, we have 
\[
\sum_{i =1}^{N-1} h(\sigma_i, e)+ h(\sigma_1, \sigma_3^{s'}), 
\]
where $s'$ is an integer. 
By the inverse process of Lemma \ref{lem3-7} to $h(\sigma_1, \sigma_3^{s'})$ and $h(\sigma_1,e)$, we deform $h(\sigma_1, e)$ to $h(e,e)$, and we have 
\begin{equation}\label{eq-0208}
h(\sigma_1, \sigma_3^{s'})+\sum_{i =2}^{N-1} h(\sigma_i, e)+ h( e,e), 
\end{equation}
where $s'$ is an integer. 
When we deform the type of a crossing to $c_{1,3}$ or $c_{3,1}$, we use only the relations $\sigma_i\sigma_i^{-1} \sim \sigma_i^{-1} \sigma_i \sim e$ and $\sigma_{i}\sigma_{j}\sigma_i \sim \sigma_{j} \sigma_i \sigma_{j}$ ($|i-j|=1$). Thus, for the types of crossings we have the following relations: $\{c_{i,j}, c_{j,i}\}$ changes to no crossing ($|i-j|>1$), and $\{c_{i,k}, c_{j,k}, c_{i,k}\}$ changes to $\{c_{j.k}, c_{i,k}, c_{j,k}\}$ ($|i-j|=1$, $|i-k|>1$, $|j-k|>1$), and $\{c_{k,i}, c_{k,j}, c_{k,i}\}$ changes to $\{c_{k,j}, c_{k,i}, c_{k,j}\}$ ($|i-j|=1$, $|i-k|>1$, $|j-k|>1$), and vice versa. The number $s(\Gamma')$ is invariant under these relations, where $\Gamma'$ is the chart of the result of the 1-handle addition to $(F, \Gamma)$. Thus $s(\Gamma')$ does not change after the application of Lemmas \ref{lem3-7}, \ref{lem3-8}, \ref{lem4-2} or \ref{lem4-3}, or the deformation we applied in this argument. Since the crossing on $h(\sigma_1, \sigma_3^{\epsilon})$ is of type $c_{3,1}$ (resp. $c_{1,3}$) if $\epsilon=+1$ (resp. $-1$), $s(h(\sigma_1, \sigma_3^{\epsilon}))=-\epsilon$, where we denote by the same notation $h(\sigma_1, \sigma_3^{\epsilon})$ the chart of $h(\sigma_1, \sigma_3^{\epsilon})$. Hence 
we see that $s(\Gamma')$ for the chart $\Gamma'$ of (\ref{eq-0208}) is $-s'$, which equals $s(\Gamma)$; this implies that $s'=-s(\Gamma)$, and we have (\ref{eq-prop}). 

We show (\ref{eq-prop2}). When we add $h(\sigma_1, \sigma_3^{-1})+\sum_{i=2}^{N-1} h(\sigma_i, e)$ to $(F, \Gamma)$, let $\Gamma'$ denote the chart of the result of the 1-handle addition. Then, the number $s(\Gamma')$ increases by 1 from $s(\Gamma)$, and we have $-s'=s(\Gamma)+1$ for $s'$ in (\ref{eq-0208}); thus we have  (\ref{eq-prop2}).  
\end{proof}

The number $s(\Gamma)=\sum_c s(c)$, where $c$ runs over crossings, is not invariant under CI-moves  (see Fig. \ref{fig5}). 
The number $s(\Gamma)$ is invariant under CI-R2, CI-R3, and CI-R4 moves, but it is not invariant under CI-M4 moves. 

\begin{remark}\label{rem-4}
In \cite[Proposition 1.10]{N5}, we announced that $u(F, \Gamma) \leq u_w(F, \Gamma)+c_{\mathrm{alg}}(\Gamma)$, where $c_{\mathrm{alg}}(\Gamma)$ is the sum of the absolute values of the number of crossings of type $c_{i,j}$ minus that of type $c_{j,i}$ ($i<j$, $i,j \in \{1, \ldots, N-1\}$):  $c_{\mathrm{alg}}(\Gamma)=\sum_{i<j} |\sum_{\text{$c$ is of type $c_{i,j}$ or $c_{j,i}$}} s(c)|$, where $c$ runs over crossings, and $s(c)=+1$ (resp. $-1$) if $c$ is of type $c_{i,j}$  for $i>j$ (resp. for $i<j$). 
In the proof given in \cite{N5}, we 
showed it under the conditions that the number of crossings are less than or equal to that of $\Gamma$ after we deformed $(F, \Gamma)$ to a weak simplified form $(F', \Gamma')$, and further that there is an inclusion from the set of crossings of $\Gamma'$ with type information to that of $\Gamma$. However, even if the number of white vertices $w(\Gamma)$ is zero, $c_{\mathrm{alg}}(\Gamma)$ is not invariant under CI-M4 moves. Hence we correct this inequality to Conjecture \ref{conj2} (\ref{conj5}). 
\end{remark}

By using a CI-M4 move, we have the following lemma.  

\begin{lemma}\label{lemma1-9}
We have 
\begin{multline}\label{eq4-6a}
h(\sigma_i, b_i)+h(\sigma_j, b_j)+h(\sigma_k, b_k)+h(e,e) \\
\sim h(\sigma_i, b_i \sigma_k^{-1})+h(\sigma_j, b_j)+h(\sigma_k, b_k \sigma_i) +h(e,e), 
\end{multline}
where $|i-j|=|j-k|=1$ and $|i-k|>1$ $(i, j, k \in \{1, \ldots, N-1\})$, and $b_i, b_j, b_k$ are braids.  
In particular, we have 
\begin{multline}\label{eq4-7a}
h(\sigma_1, \sigma_3^n)+h(\sigma_2, e)+h(\sigma_3, e) +h(e,e)\\
 \sim h(\sigma_1, \sigma_3^{n-2})+h(\sigma_2, e)+h(\sigma_3, e) +h(e,e),   
\end{multline}
where $n$ is an integer. 
\end{lemma}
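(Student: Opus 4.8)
The plan is to prove the general identity (\ref{eq4-6a}) first, by a direct chart computation whose only genuinely new ingredient is a single CI-M4 move, and then to deduce the special case (\ref{eq4-7a}) from it purely formally, using the relations of Lemmas \ref{lem3-7}, \ref{lem3-8} (Section \ref{sec3}) together with Lemma \ref{lem4-3a}.

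For (\ref{eq4-6a}), I would argue in the spirit of the proofs of Lemmas \ref{lem3-7} and \ref{lem3-8}, where a CI-M2 move combined with sliding an end of a 1-handle appends a loop to a core loop; here the bridge $h(e,e)$ plays the role of the 1-handle that is slid, and the CI-M2 move is replaced by a CI-M4 move. Since $|i-j|=|j-k|=1$ and $|i-k|>1$, the three labels are consecutive with $j$ in the middle, so one can bring the ends of the (suitably converted) bridge into neighborhoods of the core loops of $h(\sigma_i,b_i)$ and $h(\sigma_k,b_k)$ and route the connecting band past a neighborhood of the cocore of $h(\sigma_j,b_j)$, producing a local picture in which edges of labels $i$, $j$, $k$ meet exactly in the pattern of a CI-M4 move. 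Performing the CI-M4 move and sliding the ends of the bridge back to their starting positions, one reads off from the resulting core loops that $b_i$ has been multiplied on the right by $\sigma_k^{-1}$ and $b_k$ by $\sigma_i$, while $h(\sigma_j,b_j)$ and the bridge are returned unchanged. The hard part is exactly this local realization: drawing the configuration and verifying its effect on the two core loops — in particular confirming that it is the presence of the middle handle $h(\sigma_j,b_j)$ that makes the CI-M4 pattern available, and that both $h(\sigma_j,b_j)$ and the bridge are restored. A figure analogous to Figs. \ref{fig7} and \ref{fig11} should settle this.

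For (\ref{eq4-7a}), apply (\ref{eq4-6a}) with $(i,j,k)=(1,2,3)$ and $(b_i,b_j,b_k)=(\sigma_3^n,e,e)$, obtaining $h(\sigma_1,\sigma_3^{n-1})+h(\sigma_2,e)+h(\sigma_3,\sigma_1)+h(e,e)$. Next, by the inverse process of relation (\ref{eq3-9}) of Lemma \ref{lem3-7} applied with $h(\sigma_3,\sigma_1)$, convert $h(e,e)$ into $h(\sigma_3,e)$. Then Lemma \ref{lem4-3a}, applied with $j=3$, $k=1$, $\epsilon=+1$ (note $|j-k|=2>1$) and with $h(\sigma_3,e)$ and $h(\sigma_1,\sigma_3^{n-1})$ as the two auxiliary 1-handles, converts $h(\sigma_3,\sigma_1)$ into $h(\sigma_1,\sigma_3^{-1})$ and leaves the auxiliary handles unchanged. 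Now relation (\ref{eq3-12}) of Lemma \ref{lem3-8} gives $h(\sigma_1,\sigma_3^{n-1})+h(\sigma_1,\sigma_3^{-1})\sim h(\sigma_1,\sigma_3^{n-2})+h(e,\sigma_3^{-1})$, and finally relation (\ref{eq3-10}) of Lemma \ref{lem3-7}, applied with $h(\sigma_3,e)$, converts $h(e,\sigma_3^{-1})$ into $h(e,e)$. Collecting the terms yields $h(\sigma_1,\sigma_3^{n-2})+h(\sigma_2,e)+h(\sigma_3,e)+h(e,e)$, as claimed. This step is routine: the only hypotheses to verify are the conditions $|\,\cdot-\cdot\,|>1$ of the lemmas used, and the sole pair of labels occurring throughout is $\{1,3\}$.
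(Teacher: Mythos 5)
Your derivation of (\ref{eq4-7a}) from (\ref{eq4-6a}) is correct, though more roundabout than necessary: after the first application of (\ref{eq4-6a}) one can pass directly from $h(\sigma_1,\sigma_3^{n-1})+h(\sigma_3,\sigma_1)$ to $h(\sigma_1,\sigma_3^{n-2})+h(\sigma_3,e)$ by a single application of relation (\ref{eq3-12a}) of Lemma \ref{lem3-8} (with $b=\sigma_3^{n-1}$, $c=\sigma_3$, $\epsilon=+1$), which is what the paper does; your detour through the inverse of (\ref{eq3-9}), Lemma \ref{lem4-3a}, (\ref{eq3-12}) and (\ref{eq3-10}) reaches the same place, and the $|j-k|>1$ hypotheses are satisfied at each step.

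The proof of (\ref{eq4-6a}), however, has a genuine gap. You correctly identify that a CI-M4 move is the essential new ingredient, but the mechanism you describe --- routing the empty bridge past the three handles so that ``edges of labels $i$, $j$, $k$ meet exactly in the pattern of a CI-M4 move,'' then sliding back and ``reading off'' the factors $\sigma_k^{-1}$ and $\sigma_i$ --- is not a construction, and it is doubtful it can be made into one as stated: the chart edges of the three 1-handles are disjoint loops, and moving a band carrying an empty chart near them does not assemble those edges into any C-move configuration. What the paper actually does is load a carefully chosen braid word onto the fourth handle: by the inverse of Lemma \ref{lem3-7} it turns $h(e,e)$ into $h(\sigma_i\sigma_j\sigma_i\sigma_k\sigma_j\sigma_i,e)$, then uses CI-M4, CI-R2 and CI-M3 moves to deform the chart presenting this word into one with 8 white vertices and 6 crossings (Fig.~\ref{fig14a}), and then slides an end of $h(\sigma_i,b_i)$ along a specific chain of non-middle diagonal edges to collect 4 white vertices and 2 crossings (Fig.~\ref{fig15a}); only after further clean-up (eliminating the residual loops on the fourth handle, collecting one more crossing so that the first handle's core braid telescopes to $b_i\sigma_k^{-1}$, and transferring a $\sigma_i$ onto the third handle via Lemma \ref{lem4-2}) does one arrive at $h(\sigma_i,b_i\sigma_k^{-1})+h(\sigma_j,b_j)+h(\sigma_k,b_k\sigma_i)+h(e,e)$. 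The choice of the word $\sigma_i\sigma_j\sigma_i\sigma_k\sigma_j\sigma_i$, the realization of its chart with white vertices, and the verification that the slide collects exactly the right vertices while restoring $h(\sigma_j,b_j)$ and the bridge are the substance of the lemma; your proposal asserts the outcome without supplying them.
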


\begin{proof}
We show (\ref{eq4-6a}). 
By the inverse processes of Lemma \ref{lem3-7}, $h(\sigma_i, b_i)+h(\sigma_j, b_j)+h(\sigma_k, b_k) +h(e,e)$ deforms to 
\[
h(\sigma_i, b_i)+h(\sigma_j, b_j)+h(\sigma_k, b_k) +h(\sigma_i \sigma_j \sigma_i \sigma_k \sigma_j \sigma_i,e). 
\]
\begin{figure}[ht]
\includegraphics*[width=12cm]{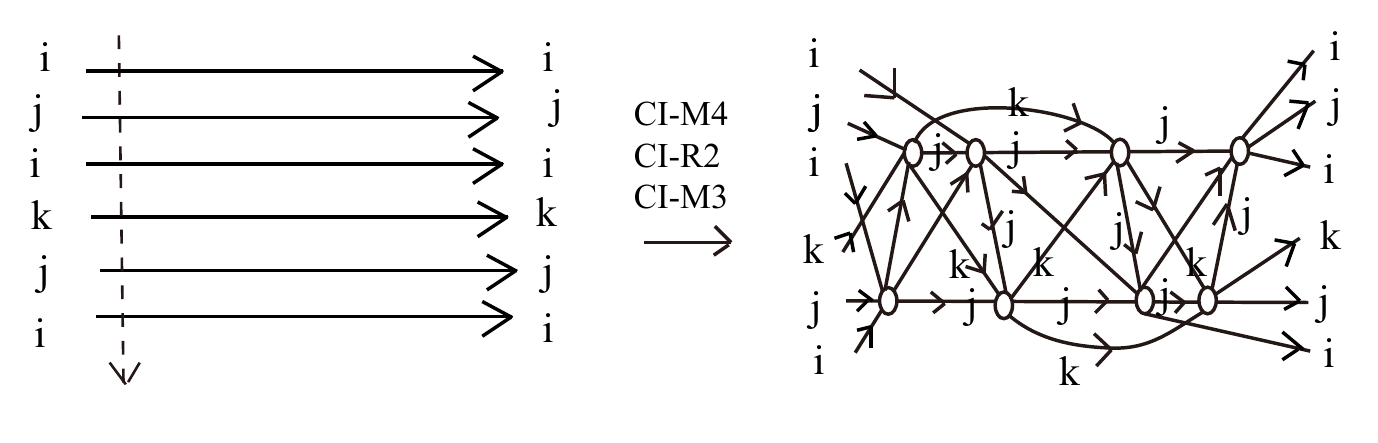}
\caption{The chart presenting the braid $\sigma_i \sigma_j \sigma_i \sigma_k \sigma_j \sigma_i$ ($|i-j|=|j-k|=1$, $|i-k|>1$) is equivalent to a chart with 8 white vertices and 6 crossings. We omit orientations of some edges. } 
\label{fig14a}
\end{figure}
\begin{figure}[ht]
\includegraphics*[height=8cm]{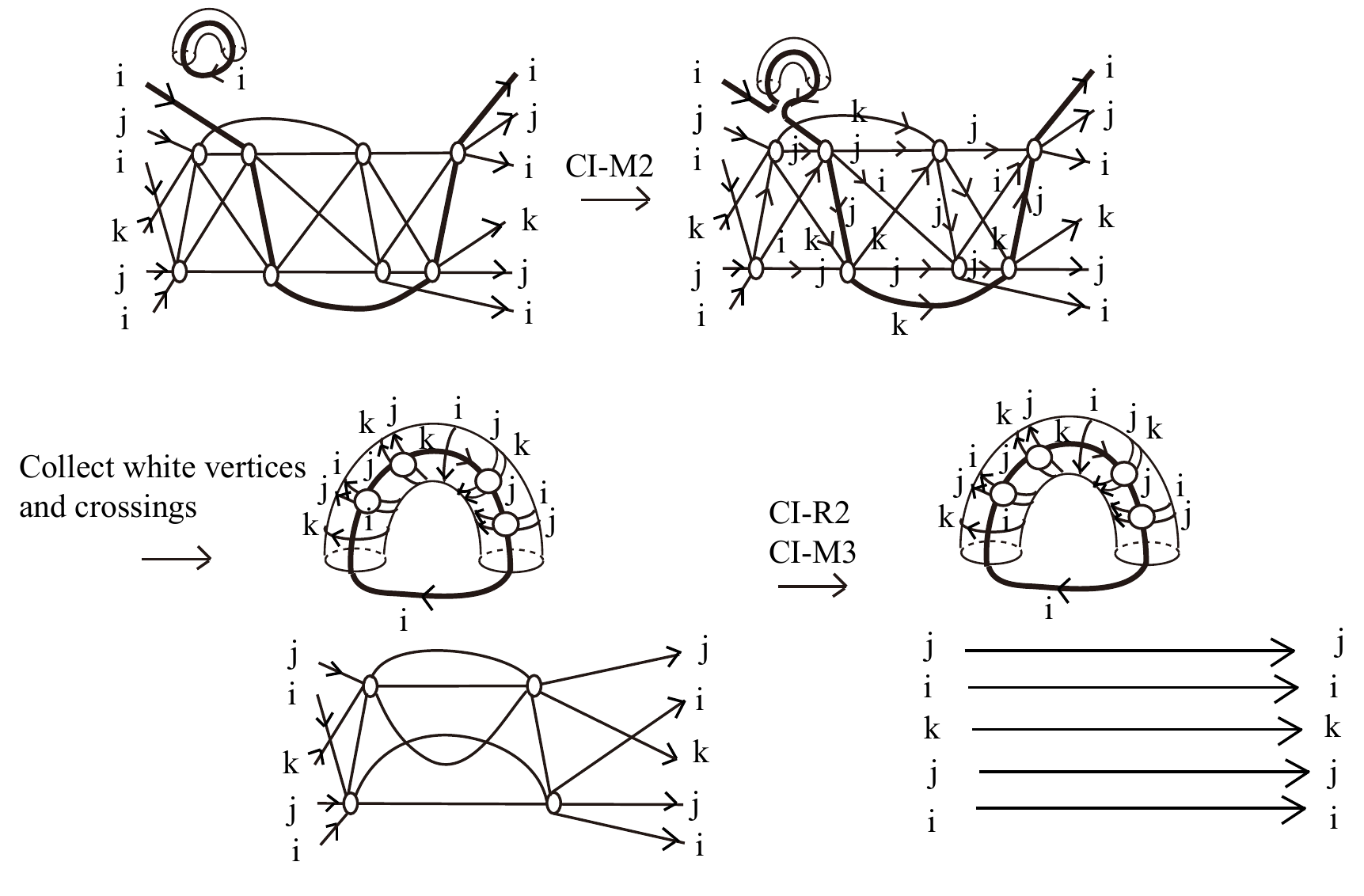}
\caption{Collecting white vertices and crossings on a 1-handle, where the braid is closed and on another 1-handle, and $|i-j|=|j-k|=1$ and $|i-k|>1$.} 
\label{fig15a}
\end{figure}
By CI-M4, CI-R2 and CI-M3 moves, $h=h(\sigma_i \sigma_j \sigma_i \sigma_k \sigma_j \sigma_i,e)$ deforms to have 8 white vertices and 6 crossings as indicated in Fig. \ref{fig14a}. We assume that $h(\sigma_i, b_i)+h(\sigma_j, b_j)+h(\sigma_k, b_k)$ are attached to a neighborhood of the edge of $h$ as in the first figure of Fig. \ref{fig15a}. 
Since the edge is a non-middle edge with the label $i$ of a white vertex $w$, we apply a CI-M2 move between the edge and $h'=h(\sigma_i, b_i)$, and we slide an end of $h'$ to collect $w$ on $h'$; see Fig. \ref{fig2a}. We slide $h'$ along the diagonal edges indicated by bold edges in Fig. \ref{fig15a}. Since the diagonal edges consist of non-middle edges, $h'$ collects 4 white vertices and 2 crossings (see Fig. \ref{fig15a}), and we have 
\begin{multline*}
h(\sigma_i, b_i \sigma_k^{-1} (\sigma_j^{-1} \sigma_i^{-1}) (\sigma_k^{-1}\sigma_j^{-1}) \sigma_i (\sigma_j \sigma_k) (\sigma_i \sigma_j) )+h(\sigma_j, b_j)+h(\sigma_k, b_k)\\ +h(\sigma_j \sigma_i \sigma_k \sigma_j \sigma_i,e). 
\end{multline*}
By Lemma \ref{lem3-7}, we eliminate chart loops of the last 1-handle, and we have 
\[
h(\sigma_i, b_i \sigma_k^{-1} (\sigma_j^{-1} \sigma_i^{-1}) (\sigma_k^{-1}\sigma_j^{-1}) \sigma_i (\sigma_j \sigma_k) (\sigma_i \sigma_j) )+h(\sigma_j, b_j)+h(\sigma_k, b_k) +h(e, e). 
\]
By the inverse process of Lemma \ref{lem3-7}, $h(e,e)$ becomes $h=h(\sigma_i, e)$, and by a similar argument as in the proof of Lemma \ref{lem4-2}, we collect on $h$ a crossing presenting the first $\sigma_i$ of $b_i \sigma_k^{-1} (\sigma_j^{-1} \sigma_i^{-1}) (\sigma_k^{-1}\sigma_j^{-1}) \sigma_i (\sigma_j \sigma_k) (\sigma_i \sigma_j)$ of the first 1-handle. Since the crossing consists of a chart edge with the label $k$ along the core and a chart loop with the label $i$ parallel to the cocore, we have 
\[
h(\sigma_i, b_i \sigma_k^{-1} (\sigma_j^{-1} \sigma_i^{-1}) (\sigma_k^{-1}\sigma_j^{-1})  (\sigma_j \sigma_k) (\sigma_i \sigma_j) )+h(\sigma_j, b_j)+h(\sigma_k, b_k) +h(\sigma_i, \sigma_k^{-1}). 
\]
The first 1-handle becomes $h(\sigma_i, b_i \sigma_k^{-1})$, and we have 
\[
h(\sigma_i, b_i \sigma_k^{-1})+h(\sigma_j, b_j)+h(\sigma_k, b_k) +h(\sigma_i, \sigma_k^{-1}). 
\]
By Lemma \ref{lem4-2} or Lemma \ref{lem3-8}, $h(\sigma_k, b_k) +h(\sigma_i, \sigma_k^{-1})$ deforms to $h(\sigma_k, b_k \sigma_i) +h(\sigma_i, e)$, and we have 
\[
h(\sigma_i, b_i \sigma_k^{-1})+h(\sigma_j, b_j)+h(\sigma_k, b_k \sigma_i) +h(\sigma_i, e). 
\]
By the inverse process of Lemma \ref{lem3-7}, $h(\sigma_i, e)$ becomes $h(e,e)$, and we have 
(\ref{eq4-6a}). 

We show (\ref{eq4-7a}). By (\ref{eq4-6a}), we have 
\begin{multline*}
h(\sigma_1, \sigma_3^n)+h(\sigma_2, e)+h(\sigma_3, e) +h(e,e)\\
 \sim h(\sigma_1, \sigma_3^{n-1})+h(\sigma_2, e)+h(\sigma_3, \sigma_1) +h(e,e). 
\end{multline*}
By applying Lemma \ref{lem4-2} or Lemma \ref{lem3-8} to $h(\sigma_1, \sigma_3^{n-1})$ and $h(\sigma_3, \sigma_1)$, we have (\ref{eq4-7a}). 
\end{proof}

\begin{proof}[Proof of Theorem \ref{thm1-9}]
By an addition of $u_w(F, \Gamma)$ 1-handles, we deform $(F, \Gamma)$ to a weak simplified form $(F', \Gamma')$. 
Then, by Proposition \ref{prop1-9} (\ref{eq-prop}) and Lemma \ref{lemma1-9} (\ref{eq4-7a}), by an addition of $\sum_{i=1}^{N-1}h(\sigma_i, e)$ if $u_w(F, \Gamma)>0$, and $\sum_{i=1}^{N-1}h(\sigma_i, e)+h(e,e)$ if $u_w(F, \Gamma)=0$, respectively, we deform $(F', \Gamma')$ to the required form (\ref{eq1-8x}) or (\ref{eq1-9x}). 

If $w(\Gamma)=0$, then, by Proposition \ref{prop1-9} (\ref{eq-prop}) and Lemma \ref{lemma1-9} (\ref{eq4-7a}), 
$(F, \Gamma)$ deforms to (\ref{eq1-8x}) if $c(\Gamma)$ is even, and (\ref{eq1-9x}) if $c(\Gamma)$ is odd.  
If $b(\Gamma)=0$, then, by the argument in the proof of Theorem \ref{thm1-7}, by an addition of 1-handles and eliminating white vertices by pairs $\{ w_{i,j}, w_{j,i} \}$, $(F, \Gamma)$ deforms to a chart without white vertices such that the types of the crossings are unchanged. 
Hence, by Proposition \ref{prop1-9} (\ref{eq-prop}) and Lemma \ref{lemma1-9} (\ref{eq4-7a}), 
we have (\ref{eq1-8y}) and (\ref{eq1-9y}). 
\end{proof}

Remark that in the proof of Theorem \ref{thm1-9}, we used $\max\{1, u_w(F, \Gamma)\}+N-1$ 1-handles for the general case, but 
we used $\max \{1, \lfloor w(\Gamma)/2 \rfloor \}+N-1$ 1-handles for the case when $w(\Gamma)=0$ or $b(\Gamma)=0$. 

\begin{proof}[Proof of Corollary \ref{thm1-11}]
We show (\ref{eq1-12}). Let $\Gamma$ be a chart such that $w(\Gamma)=0$. 
Then, $u_w(F, \Gamma) \leq 1+(N-1)=N$ follows from Proposition \ref{prop1-9} (\ref{eq-prop}) and Lemma \ref{lemma1-9} (\ref{eq4-7a}). 
The case of $u(F, \Gamma)$ is shown as follows. Let $c(\Gamma)$ be the number of crossings of $\Gamma$. 
If $c(\Gamma)$ is even, then, by an addition of $\sum_{i=1}^{N-1}h(\sigma_i, e)+h(e,e)$, $(F, \Gamma)$ deforms to a simplified form (\ref{eq1-8y}), and hence $u(F, \Gamma) \leq N$. If $c(\Gamma)$ is odd, then, by Proposition \ref{prop1-9} (\ref{eq-prop2}) and Lemma \ref{lemma1-9} (\ref{eq4-7a}), by an addition of $h(\sigma_1, \sigma_3^{-1})+\sum_{i=2}^{N-1}h(\sigma_i, e)+h(e,e)$, $(F, \Gamma)$ deforms to a simplified form. Thus $u(F, \Gamma) \leq N$. 

We show (\ref{eq1-10}). 
By the proof of Theorem \ref{thm1-7}, by an addition of at most $\lfloor w(\Gamma)/2+b(\Gamma)(N-2)/4 \rfloor$ 1-handles, $\Gamma$ deforms to have no white vertices. Then, by the same argument as the case $w(\Gamma)=0$, 
we have $u_w(F, \Gamma), u(F, \Gamma) \leq \max\{ 1, \lfloor w(\Gamma)/2+b(\Gamma)(N-2)/4\rfloor \}+N-1$. 
\end{proof}

\begin{proof}[Proof of Corollary \ref{thm1-10}]
By the same argument as in the proof of Corollary \ref{thm1-11}, by Proposition \ref{prop1-9} and Lemma \ref{lemma1-9} (\ref{eq4-7a}), by an addition of $\max\{1, u_w(F, \Gamma)\}+N-1$ 1-handles, we have a simplified form, and hence $u(F, \Gamma) \leq \max\{1, u_w(F, \Gamma)\}+N-1$.
\end{proof}

\section*{Acknowledgements}
The author would like to thank Professors Seiichi Kamada and Shin Satoh for their helpful comments. The author was partially supported by JSPS KAKENHI Grant Numbers 15H05740 and 15K17532.

\end{document}